\newtheorem{theorem}{Theorem}[section]
\newtheorem{definition}[theorem]{Definition}
\newtheorem{lemma}[theorem]{Lemma}
\newtheorem{remark}[theorem]{Remark}
\newtheorem{corollary}[theorem]{Corollary}
\newtheorem{example}[theorem]{Example}
\newcommand{\red}[1]{\textcolor{red}{#1}}
\numberwithin{equation}{section}
\numberwithin{figure}{section}
\begin{document}
\setlength\arraycolsep{2pt}
\date{\today}
       
\title{Approximate peak time to time-domain fluorescence diffuse optical tomography for nonzero fluorescence lifetime}
\author{Shuli Chen,\; Junyong Eom\thanks{Corresponding author},\; Gen Nakamura,\; Goro  Nishimura}

\maketitle
\begin{abstract}
This paper concerns an inverse problem for fluorescence diffuse optical tomography (FDOT) reconstructing locations of multiple point targets from the measured temporal response functions. The targets are multiple fluorescent point objects with a nonzero fluorescence lifetime at unknown locations. Peak time,  when the temporal response function of the fluorescence reaches its maximum, is a robust parameter of the temporal response function in FDOT because it is most less suffered by the artifacts, such as noise, and is easily determined by experiments. 
%The FDOT process is modeled by two diffusion equations coupled with the source term \red{given by} a convolution with the exponential decay \red{term} of the fluorescence emission \red{delayed \sout{described}} by the lifetime. \red{We focus on the peak time of time-domain data, expecting a unique existence intuitively. \sout{Physically, a unique peak time can be clearly observed in the time-domain data.} 
%To use the peak time in FDOT, we derive the explicit expresssion for approximation peak time formula.
We derive an approximate peak time equation based on asymptotic analysis in an explicit way in the case of nonzero fluorescence lifetime when there are single and multiple point targets. The performance of the approximation is numerically verified.
Then, we develop a bisection algorithm to reconstruct the location of a single point target from the algorithm proposed in
\cite{Chen2023} for the case of zero fluorescence lifetime.
Moreover, we propose a boundary-scan algorithm for the reconstruction of locations of multiple point targets.
Finally, several numerical experiments are implemented to show the efficiency and robustness of the addressed algorithms.
\end{abstract}

\medskip
{\bf Keywords.} FDOT, Asymptotic analysis, Peak time, Reconstruction algorithm.

\medskip
{\bf MSC(2010):} 35R30, 35K20.

\vspace{10pt}
\noindent Addresses:

\noindent S. Chen:
School of Mathematics, Southeast University, Nanjing 210096, P. R. China. \\
\noindent 
E-mail: {\tt sli\_chen@126.com} \;
ORCID ID: {\tt 0009-0000-6563-5312}\\

\noindent J. Eom:  
Research Institute for Electronic Science, Hokkaido University, Sapporo 001-0020, Japan.\\
\noindent 
E-mail: {\tt eom@es.hokudai.ac.jp} \;
ORCID ID: {\tt 0000-0002-2749-3322}\\

\noindent G. Nakamura:  Department of Mathematics, Hokkaido University, Sapporo 060-0810,
Research Institute for Electronic Science, Hokkaido University, Sapporo 001-0020, Japan.\\
\noindent 
E-mail: {\tt nakamuragenn@gmail.com} \; 
ORCID ID: {\tt 0000-0002-7911-8612}\\

\noindent G. Nishimura:  
Research Institute for Electronic Science, Hokkaido University, Sapporo 001-0020, Japan.\\
\noindent 
E-mail: {\tt gnishi@es.hokudai.ac.jp} \;
ORCID ID: {\tt 0000-0003-4330-2626}\\

\noindent {\bf Acknowledgement}:
The first author was supported by the National Natural Science Foundation of China (No. 12071072) and China Scholarship Council (No. 202106090240). The second author was supported by the JSPS KAKENHI (Grant Number 23K19002).
The third author was supported by the JSPS KAKENHI (Grant Number JP22K03366).
The last author was supported by the JSPS/MEXT KAKENHI (Grant Numbers JP19K04421 and JP23H04127).

\newpage
%%%% Start %%%%%%

\section[Background]{Introduction}
Fluorescence diffuse optical tomography (FDOT) is one of the imaging techniques using fluorescence light from fluorophores in highly scattering media. In particular, this technique is essential in biological or medical applications for tissues \textit{in vivo} to visualize specific diseases and biological activities using fluorescent probes by reconstructing their unknown regions from the measurements outside tissue \cite{Ammari2020, Mycek2004, Vasilis2002}. Although the importance of imaging inside tissue is well understood, the problems caused by the highly scattering media do not allow using standard imaging techniques, requiring a special imaging technique, FDOT. This imaging technique is also called fluorescence molecular tomography (FMT) and is categorized as a kind of diffuse optical tomography (DOT) using fluorescence. In highly scattering media like biological tissue, the light path is not straight anymore, and the repeating scattering makes the light propagation an energy dissipation process approximately described by a diffusion equation. %\sout{In this medium, the light propagation is considered as an energy dissipation by random scattering.} 
 Eventually, %\red{\sout{the spatial distribution of fluorescence becomes significantly blurred, and the quantitative information is lost. Therefore, instead of the common simple imaging techniques,}} 
 a reconstruction method based on the light propagation model is required to visualize the three-dimensional fluorescence distribution, recovering the quantitative information by measurements on the boundary of the medium \cite{Jiang2011, Jiang2022}. 
%\red{\sout{In FDOT, two light propagation processes for excitation and fluorescence emission lights are coupled. Excitation light injected at the boundary of the medium propagates through the medium and reaches the location of the fluorophores. Some of the excitation light is absorbed by the fluorophores, exciting the fluorophores, then after a moment, the fluorophores emit the fluorescence light at a longer wavelength than that of the excitation light. The fluorescence light again propagates through the medium until the fluorescence light reaches a detector located at the boundary of the medium}} %\cite{Jiang2011, Jiang2022}. 
The FDOT is categorized by the measurement data types \cite{Hebden1997, Hoshi2016},  
%The mathematical model of FDOT can be categorized by measuring different physical quantities \cite{Arridge2009,Hoshi2016}, 
such as the steady-state fluorescence intensity (CW method)\cite{Herve2011, Ducros2011}, the temporal response function of the fluorescence intensity (time-domain method)\cite{Lam2005, Nishimura2017}, the phase and demodulation of the fluorescence intensity (frequency-domain method)\cite{Ammari2014, Milstein2003} and a hybrid method of them \cite{Papadimitriou2020}. We choose the time-domain method because the temporal response function has direct information on the distribution of optical paths determined by the geometry, the position of the injection point of the excitation light, the distribution of the fluorophores, and the detection point of the fluorescence. %\sout{These are usually combined with the geometrical information of the excitation light source and the detector for the input of the FDOT. These data types are determined by the devices in the experiment and have advantages and disadvantages in realistic conditions. }}

In this paper, we analyze the temporal response function. The measured temporal response function is a set of the intensities of detected light in a certain time period at certain times after the instantaneous light injection like a delta function, and the time reflects the optical path length of a trajectory in the medium \cite{Hebden1991}, and it delays, in our case, by the staying time at the excited state of fluorophore determined by the fluorescence lifetime. Namely, the measured temporal response function is a discretized temporal response function, which is given by the solution of the light propagation model and reflects the optical path length distribution function associated with the target location. Then, we focus on the peak time, which will be defined as the time when the function becomes maximum, to characterize the temporal response function for the inputs of the reconstruction of targets. The maximum position of the temporal response function is least affected by the noises and artifacts due to environmental contamination and should be robustly determined. In addition, the peak position reflects the most likely optical path length for the light in the variety of travels from the source to the detector, which is determined by the distances between the source, the target, and the detector.

%The temporal response of the fluorescence intensity (time-domain data) records the fluorescence intensity over time when the excitation light is injected. 
%Compared with the measured data of other physical quantities, the unique advantage of this data is that it reflects the optical path length distribution \cite{Arridge92}. 
%More precisely, if a single target (i.e., fluorophore) produces fluorescence, the temporal response function is related to the distance to that target. This relation will guide in reconstructing the location of the target. Therefore, this paper will consider this data for our inverse problem.

Next, we start to formulate our inverse problem. We first consider the measurements of the fluorescence targets in tissue like the human chest, which is considerably larger than the measurable distances, resulting in the boundary being an approximately infinite plane. Thus, we consider a medium $\Omega:={\mathbb R}_+^3$ with boundary $\partial\Omega$. Let $u_e$ and $U_m$ be excitation and emission light for the fluorescence process, respectively. These two processes can be modeled as the following coupled diffusion equations \cite{Liu2022}:
\begin{equation}\label{ue_sys}
\left\{
\begin{array}{ll}
\left(v^{-1} \partial_t-D \Delta+\mu_a\right) u_e=0, & (x, t) \in \Omega \times(0, \infty), \\ 
u_e=0, & (x, t) \in \bar{\Omega} \times\{0\}, \\
\partial_\nu u_e+\beta u_e=\delta\left(x-x_s\right) \delta(t), & (x, t) \in \partial \Omega \times(0, \infty)
\end{array}
\right.
\end{equation}
and
\begin{equation}\label{um_sys}
\left\{
\begin{array}{ll}
\left(v^{-1} \partial_t-D \Delta+\mu_a\right) U_m=
\mu (f_\ell \ast u_e), & (x, t) \in \Omega \times(0, \infty), \\ 
U_m=0, & (x, t) \in \bar{\Omega} \times\{0\}, \\
\partial_\nu U_m+\beta U_m=0, & (x, t) \in \partial \Omega \times(0, \infty).
\end{array}
\right.
\end{equation}
Here, $\partial_\nu:=\nu \cdot \nabla$ is the exterior normal derivative, $v$ is the speed of light in the medium, $D$ is the diffusion constant, $\mu_a$ is the absorption coefficient, and  $\beta=b/D>0$ is a positive constant coming from the Fresnel reflection at the boundary, $b\in[0,1]$, due to the refractive index mismatch at the boundary. Also, $\delta(\cdot)$ denotes the delta function, and $x_s\in\partial\Omega $ is the position of the point source where the excitation light is injected. Further, the source term, which corresponds to the fluorescence emission from the fluorophores, for $U_m$ on the right-hand side of \eqref{um_sys} is given as  
\begin{equation}\label{source term}
\begin{split}
\mu (f_\ell \ast u_e) (x,t) 
&:= \mu(x) \int_0^t \ell^{-1}e^{-\frac{t-s}{\ell}} u_e(x, s; x_s)\,{\rm d}s,
\end{split}
\end{equation}
where  $\mu(x)>0$ is the absorption coefficient of a fluorophore, and $f_\ell\ast u_e$ is the convolution of $u_e$ and the fluorescence decay function, $f_\ell(t):=\ell^{-1}e^{-t/\ell},\,t\ge0$, with the fluorescence lifetime $\ell>0$. Note that if $\ell\rightarrow 0$, it is easy to see that $\mu(f_\ell\ast u_e)(x,t)=\mu(x)u_e(x,t;x_s)$. Hence, we extend the definition of \eqref{source term} to the case that the zero fluorescence lifetime $(\ell=0)$ by defining its right-hand side by $\mu(x)u_e(x,t;x_s)$.
\medskip

Then, our inverse problem is formulated as follows.

\noindent
\textbf{Inverse Problem}: Let 
\begin{equation}\label{mult_targets}
\mu(x)=\sum_{j=1}^J c_j\delta(x-x_c^{(j)}),
\end{equation}
where each $x_c^{(j)}$ is the location of $j$-th unknown target, and  each unknown $c_j$ is the absorption strength committed fluorescence by the target at $x_c^{(j)}$.
Then, reconstruct each $x_c^{(j)}$ from the measured peak times given as
$$t_{peak}^{(n)}:=t_{peak}(x_d^{(n)},\,x_s^{(n)})=\underset{t>0}{\arg \max}\,U_m(x_d^{(n)},\,t;\,x_s^{(n)}),\,n=1,\,2,\,\cdots,\,N,$$
where  $\left\{\{x_d^{(n)},\,x_s^{(n)}\}\right\}_{n=1}^N$ are $N$ sets of S-D pairs consisting of detector points $\{x_d^{(n)}\}_{n=1}^N$ and source points $\{x_s^{(n)}\}_{n=1}^N$ located on $\partial\Omega$. For $J=1$ and $J\geq 2$, we call $\mu$ single point target and multiple point targets, respectively.

\medskip

%The studies of FDOT can be divided into two categories according to different research methods. One is a modeling problem and the other is a data processing problem. As mentioned above, the model of FDOT is related to the data type, which will lead to different studies.  On the other hand, FDOT is an ill-posed inverse problem discussed in enormous studies, and it is known that the initial guess and the regularization to confine the problem are ones of crucial points to solve FDOT \cite{Arridge2009}. 
%To improve the image quality, we  refer to \cite{Baritaux2010, Han2010Yang} for $L^p$ regularization, \cite{Abascal2011, Behrooz2012} for total variation regularization, \cite{Han2010, Ammari2020, Mohajerani2007} for sparsity regularization and some references therein.

Now, we briefly review some related works to the mentioned inverse problem in historical order, where the peak time of the time-domain data is studied or further used to solve the inverse problem. In \cite{Hall2004,Hall2010}, the authors considered the case of $\ell>0$ and $\beta=0$. They reconstructed the depth of the point target by numerically calculating the peak time without giving any of the formulas derived in a mathematically rigorous way. Then in \cite{Eom2023a}, the authors considered the case of $\ell=0$. By using the asymptotic analysis for the formula of the solution to \eqref{um_sys}, they derived explicit expressions of the approximate peak time equations for the cases $\beta=0$, $\beta>0$, and $\beta=\infty$ in \eqref{ue_sys} and \eqref{um_sys}.
Further, in \cite{Chen2023}, the authors of this paper gave a better expression for the asymptotic behavior of the mentioned solution for the case $\ell=0, \beta>0$ and derived an approximate peak time equation. Also, this equation led them to propose a bisection reconstruction algorithm to reconstruct the location of the point target and verify its accuracy and robustness.

\begin{comment}
In this paper, we derive an approximate peak time equation and propose a bisection reconstruction algorithm for the case of $\ell>0$ in a mathematically rigorous way. We first derive an asymptotic formula of the time integral of the zero-lifetime solution $u_m$ in Theorem \ref{solbehaviortheorem2}, where we used the asymptotic formula of $u_m$ given in \cite{Chen2023}. From \eqref{expression of tilde um}, $U_m$ is be represented by a convolution of the exponential decay of the fluorescence emission described by $\ell$. By integration by parts, we derive an asymptotic formula of the time-derivative of $U_m$ given in \eqref{t-deri_Um} as $\ell$ goes to infinity. Using the asymptotic formula given in Theorem \ref{solbehaviortheorem2} and omitting a non-vanishing function, we define the approximate peak time equation and its unique positive root as the approximate peak time. The properties of the approximate peak time related to the distance $|x_d - x_c|^2+ |x_s - x_c|^2$ are rigorously proven and shown in Theorem \ref{thm_orderpeak}, which are also numerically verified to the peak time.  We propose the bisection reconstruction algorithm to reconstruct the location of unknown point target with two stages. The first two coordinates of the location are reconstructed by using bisection method taking the measured peak time as an index. Then, its depth are reconstructed by solving the approximate peak time equation. 
\end{comment}

We provide an asymptotic behavior of the solution $U_m$ to \eqref{um_sys} and derive an approximate peak time equation in the case of nonzero fluorescence lifetime, $\ell >0$. Since the effect from the fluorescence decay function $f_{\ell}$ in \eqref{source term} is not negligible in the practical range of  $\ell >0$, resulting in a clear difference in the peak times between zero lifetime solution and nonzero fluorescence lifetime solution $U_m$, we consider the case $\ell\gg1$, i.e. large enough fluorescence lifetime. 
Then, the asymptotic behavior of the solution $U_m$   
is written in terms of the integral (See \eqref{Um})
$$
\int_0^t u_m(s) \; ds,
$$
where $u_m$ is the solution to \eqref{um_sys} with zero fluorescence lifetime $\ell =0$. We can approximate the above integral by replacing $u_m$ as $u_m^a$, where $u_m^a$ is the asymptotic profile of $u_m$ when the depth of the point target is large enough (See Lemma \ref{solbehaviortheorem1}). In Theorem \ref{solbehaviortheorem2}, we provide the asymptotic behavior for 
\begin{equation}
\label{intasymp}
\int_0^t u_m^a(s) \; ds \sim k^{-\frac{3}{4}} \left( \pi \lambda \right)^\frac{1}{2}
u^a_m(\lambda k^{-\frac{1}{2}}),\,\,\lambda\gg1 \quad {\rm with} \quad  
k := \mu_a v, \quad \lambda^2 := \frac{|x_d-x_c|^2+|x_s-x_c|^2}{2vD},
\end{equation}
which enables us to derive the asymptotic behavior for the solution $U_m$ to \eqref{um_sys} and define approximate peak time $t>0$ as a solution of
\begin{equation}\label{NonlinearScheme2}
\begin{split}
\lambda e^{-\frac{(\sqrt{k}t-\lambda)^2}{t}} = \pi^{\frac{1}{2}} \ell^{-1} t^{\frac{3}{2}}.
\end{split}
\end{equation}
Here, we note that $\lambda>0$ is an important parameter, which is a constant times the square root of the sum of two squares of distances which are the distances from the target $x_c$ to the detector $x_d$ and the source $x_s$.

The advantage of the proposed approximate peak time equation \eqref{NonlinearScheme2} is that it has an explicit and simple form, which is derived from the asymptotic analysis of the solution 
$U_m$ to \eqref{um_sys}. 
As far as we know, there is less study on the asymptotic of the peak time in a rigorous way in the case of nonzero fluorescence lifetime $\ell >0$. Then, it is verified numerically that the approximate peak time has an excellent accuracy to the peak time of the solution $U_m$ in \eqref{um_sys} under the practical range of optical parameters $\mu_a,\,D,\,\ell$, and the depth of target $x_{c_3}$ (see Figure \ref{fig_tps_diffpara}). Next, we apply the approximate peak time for FDOT. 
When there is a single point target, we can develop the bisection reconstruction algorithm for the case $\ell >0$, where the algorithm itself has been already given in \cite{Chen2023} for the case $\ell =0$ and see that the algorithm is less time-consuming, efficient, potentially robust, and accurate. 
Even the equation \eqref{NonlinearScheme2} is written in the case of a single point target, it can be generalized for the case of multiple point targets depending on the parameter $\lambda >0$ (See \eqref{tp_formulas}). 
Finally, a boundary-scan algorithm is proposed to reconstruct multiple point targets, and it is verified numerically. 

\begin{comment}
The major novelties of the paper are that (i) we give the approximate peak time equation with an explicit expression, which reflects the optical path length distribution and depends on the actual physical parameters. (ii) The approximate peak time possesses the same properties of the peak time and has a nice accuracy. (iii) The bisection reconstruction algorithm is less time-consuming, efficient, potentially robust and accurate to reconstruct the location of unknown point target. 
\end{comment}

The rest of this paper is organized as follows. In Section \ref{sec_asym}, we introduce the asymptotic behavior of zero fluorescence lifetime solution and obtain the asymptotic behavior of its time integration.  
In Section \ref{sec_peak}, we derive the approximate peak time equation and define the approximate peak time. The performance of the approximation is numerically verified. 
In Section \ref{sec_algo}, the mathematical properties between the peak time and the location of a target are rigorously studied. Based on this, we propose the bisection reconstruction algorithm and the boundary-scan reconstruction algorithm for the cases of single point and multiple point targets, respectively. In Section \ref{sec_exp}, several examples are tested to show the efficiency of the proposed reconstruction algorithms. %Then, we derive the localized approximate peak time equation and propose the localized reconstruction algorithm with a numerical verification for the case of the multiple point targets.  
Finally, we conclude in Section \ref{sec_conc}.

\section[A]{Asymptotic behavior of solution}
\label{sec_asym}
In this section, we consider the asymptotic behavior of the solution $U_m$ to \eqref{um_sys} for single point and multiple point targets, respectively.

\subsection{Single point target}

In this subsection, we focus on the single point target case, i.e., $J=1$ in \eqref{mult_targets}. For simplicity, we suppress the superscript $(1)$ of $x_c^{(1)}$ and subscript $1$ of $c_1$ in \eqref{mult_targets}.  
By \eqref{source term}, the solution $U_m$ to \eqref{um_sys} is given as
\begin{equation}\label{expression of tilde um}
U_m = {\mathcal K} \ast \left( \mu f_\ell \ast u_e \right) 
=   f_\ell \ast \left( {\mathcal K} \ast \left[\mu u_e\right] \right), 
\end{equation}
where ${\mathcal K}$ is the Green function associated with \eqref{um_sys}.
Here we denote $u_m :={\mathcal K} \ast [\mu u_e]$ by the zero lifetime solution, which is independent of fluorescence lifetime $\ell>0$. 
For any given $x_d,x_s\in\partial\Omega$,  the solution $U_m$ to \eqref{expression of tilde um} becomes 
\begin{equation}\label{Um}
\begin{split}
U_m(x_d,t; x_s) &= \int_0^t \ell^{-1}e^{-\frac{t-s}{\ell}} u_m (x_d,s; x_s)  \,{\rm d}s \\
&= \ell^{-1} \int_0^t  u_m (x_d,s; x_s)  \,{\rm d}s - \int_0^t  \ell^{-2}e^{-\frac{t-s}{\ell}} \left[ \int_0^s u_m (x_d,\delta; x_s) \,{\rm d}\delta \right] \,{\rm d}s \\
&= \ell^{-1} \int_0^t  u_m (x_d,s; x_s)  \,{\rm d}s - \ell^{-2} \int_0^t  \left[ \int_0^s u_m (x_d,\delta; x_s) \,{\rm d}\delta \right] \,{\rm d}s + O \left( \ell^{-3} \right)
\end{split}
\end{equation}
as $\ell \gg 1$. We define the approximate solution of $U_m$ by
\begin{equation}\label{App_Um}
\begin{split}
U_m^a(t) &:= \ell^{-1} \int_0^t  u_m (s)  \,{\rm d}s - \ell^{-2} \int_0^t  \int_0^s u_m (\delta) \,{\rm d}\delta \,{\rm d}s \\
&=  \ell^{-1} \int_0^t  u_m (s)  \,{\rm d}s - 
\ell^{-2} \int_0^t (t-s) u_m (s)   \,{\rm d}s.
\end{split}
\end{equation}
%\medskip

Now, our aim is to study the asymptotic behaviors of $u_m$ and its time integration. 
We first give the expression for $u_m$.
The Green function ${\mathcal K}$ is represented by
\begin{equation}\label{expression of K}
{\mathcal K}(x, y; t) = \frac{ve^{-v\mu_a t}}{(4\pi vD t)^{\frac{3}{2}}}
e^{-\frac{(x_1-y_1)^2+(x_2-y_2)^2}{4vDt}} {\mathcal K}_3(x_3,y_3;t),
\end{equation}
where ${\mathcal K}_3$ satisfies the Robin boundary condition, which is 
\begin{equation*}
\left\{
\begin{array}{ll}
{\mathcal K}_3(x_3,y_3;t)= e^{-\frac{{(x_3+y_3)}^2}{4vDt}} +e^{-\frac{{(x_3-y_3)}^2}{4vDt}} -
2\beta\sqrt{\pi vDt}e^{\beta (x_3+y_3)+\beta^2vDt}
\mathop{\mathrm{erfc}}
\left(\frac{x_3+y_3+2\beta vDt}{\sqrt{4vDt}}\right),\vspace{5pt} \\
{\rm erfc}(\eta)=\frac{2}{\sqrt{\pi}}\int_\eta^\infty e^{-s^2} \, {\rm d}s, \; \; \eta\in \mathbb{R},
\end{array}
\right.
\end{equation*}
%We have used here and will use in this section, for any three dimensional 
where $x_j$ denotes $j$-th component of the three dimensional vector $x$. 
For the zero lifetime solution $u_m$, since the solution of $u_e$ to \eqref{ue_sys} is given as  $u_e = D \times {\mathcal K}$, we have the following expression
\begin{equation}\label{um-point2}
\begin{split}
u_m(x_d,t; x_s) = \frac{ce^{-v\mu_a t}}{16\pi^3 D^2v} \int_0^t \big((t-s)s\big)^{-\frac{3}{2}}  &e^{-\frac{|x_d-x_c|^2}{4vD(t-s)}} e^{-\frac{|x_s-x_c|^2}{4vDs}} \\
& \times {\hat {\mathcal K}}_3(x_{c_3};t-s) {\hat {\mathcal K}}_3(x_{c_3};s) \,{\rm d}s,
\end{split}
\end{equation}
where
\begin{equation*}
{\hat {\mathcal K}}_3(x_{c_3};t) :=1-\beta\sqrt{\pi vDt} \,\exp{\left(\left(\frac{x_{c_3}+2\beta vDt}{\sqrt{4vDt}}\right)^2\right)}
\mathop{\mathrm{erfc}}
\left(\frac{x_{c_3}+2\beta vDt}{\sqrt{4vDt}}\right).
\end{equation*}
Here $|x|^2 = x_1^2 + x_2^2 +x_3^2$ for $x = (x_1,x_2,x_3) \in \mathbb{R}^3$.

\medskip
The next lemma describes the asymptotic behavior of ${u}_m$  for $x_{c_3}\gg1$ (see \cite[Theorem 2.2 and Remark 2.3]{Chen2023}).
\begin{lemma}\label{solbehaviortheorem1}
Let $x_d, \; x_s\in\partial\Omega$, and assume that
\begin{equation}\label{uma_constr}
\begin{split}
\Big| |x_d-x_c|^2-|x_s-x_c|^2 \Big| \le C t \quad {\rm for\; some} \quad C>0.
\end{split}
\end{equation}
Then, $u_m$ satisfies
\begin{equation}\label{1st_asym_um}
\begin{split}
u_m(t) = u^a_m(t) +
O \left( u^a_m(t) x_{c_3}^{{-1}}  \right),\;\;x_{c_3}\gg1,
\end{split}
\end{equation}
where
\begin{align}\label{defIt}
u^a_m(t):=& \frac{c \exp\left({-\mu_a vt }\right)}{8\pi^\frac{5}{2} v^\frac{1}{2} D^\frac{3}{2} } \left( \frac{1}{|x_d-x_c|} + \frac{1}{|x_s-x_c|} \right)  t^{-\frac{3}{2}} \nonumber\\
&\times
\exp\left( -\frac{|x_d-x_c|^2+|x_s-x_c|^2}{2vDt} \right)
\left(\frac{x_{c_3}}{x_{c_3}+\beta v D t}\right)^2.
\end{align}
\end{lemma}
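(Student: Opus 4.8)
The plan is to evaluate the time-convolution integral \eqref{um-point2} by Laplace's method, treating the depth $x_{c_3}$ as the large parameter. Write $r_d:=|x_d-x_c|$, $r_s:=|x_s-x_c|$ and set $p:=r_d^2/(4vD)$, $q:=r_s^2/(4vD)$, so that \eqref{um-point2} reads
\[
u_m(t)=\frac{ce^{-v\mu_a t}}{16\pi^3 D^2 v}\int_0^t \big((t-s)s\big)^{-\frac32}\,e^{-\frac{p}{t-s}-\frac{q}{s}}\,\hat{\mathcal{K}}_3(x_{c_3};t-s)\,\hat{\mathcal{K}}_3(x_{c_3};s)\,{\rm d}s.
\]
Since $x_d,x_s\in\partial\Omega$ have vanishing third coordinate, $r_d^2,r_s^2\ge x_{c_3}^2$, so the exponent $-p/(t-s)-q/s$ carries a large parameter $\Lambda\sim x_{c_3}^2/(vD)$ and the integrand concentrates on a window of width $\sim\Lambda^{-1/2}\sim x_{c_3}^{-1}$ about the unique interior minimizer of $\psi(s):=\frac{p}{t-s}+\frac{q}{s}$, namely $s_\ast=r_s t/(r_d+r_s)$, with $t-s_\ast=r_d t/(r_d+r_s)$ and $\psi(s_\ast)=(r_d+r_s)^2/(4vDt)$.

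First I would dispose of the part without the slowly varying factors $\hat{\mathcal{K}}_3$. The kernel $\tau^{-3/2}e^{-c/\tau}$ is, up to a constant, a one-sided stable-$(1/2)$ (L\'evy) density, and such densities are closed under convolution, giving the \emph{exact} identity
\[
\int_0^t \big((t-s)s\big)^{-\frac32}\,e^{-\frac{p}{t-s}-\frac{q}{s}}\,{\rm d}s=2\sqrt{\pi vD}\left(\frac{1}{r_d}+\frac{1}{r_s}\right)t^{-\frac32}\,e^{-\frac{(r_d+r_s)^2}{4vDt}},
\]
which I would verify by Laplace transform in $t$, using $\int_0^\infty \tau^{-3/2}e^{-c/\tau-\sigma\tau}\,{\rm d}\tau=\sqrt{\pi/c}\,e^{-2\sqrt{c\sigma}}$ and inverting the resulting product. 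This already reproduces, exactly, the prefactor $\tfrac{ce^{-v\mu_a t}}{8\pi^{5/2}v^{1/2}D^{3/2}}\bigl(\tfrac1{r_d}+\tfrac1{r_s}\bigr)t^{-3/2}$ of $u^a_m$ in \eqref{defIt}, together with the saddle-point exponent $(r_d+r_s)^2/(4vDt)$.

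Next I would restore the $\hat{\mathcal{K}}_3$ factors. The large-argument expansion $e^{z^2}\mathrm{erfc}(z)=\tfrac{1}{\sqrt{\pi}\,z}\bigl(1+O(z^{-2})\bigr)$, applied with $z=\tfrac{x_{c_3}+2\beta vD\tau}{2\sqrt{vD\tau}}\gg1$, yields
\[
\hat{\mathcal{K}}_3(x_{c_3};\tau)=\frac{x_{c_3}}{x_{c_3}+2\beta vD\tau}\bigl(1+O(x_{c_3}^{-2})\bigr),
\]
so each factor is $O(1)$ and varies on the slow scale $\tau\sim x_{c_3}$, whereas the Gaussian weight lives on the window of width $\sim x_{c_3}^{-1}$. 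Detaching the product at the peak, using the exact convolution identity for the remainder, and bounding the error from the variation of $\hat{\mathcal{K}}_3$ across the window (plus the erfc tail) gives the two rational factors evaluated at $s_\ast,t-s_\ast$. Finally I would invoke the hypothesis \eqref{uma_constr}: since $r_d+r_s\gtrsim x_{c_3}$, the bound $|r_d^2-r_s^2|\le Ct$ forces $|r_d-r_s|\lesssim x_{c_3}^{-1}$, whence $s_\ast=t/2+O(x_{c_3}^{-2})$ and $(r_d+r_s)^2/4=(r_d^2+r_s^2)/2-(r_d-r_s)^2/4=(r_d^2+r_s^2)/2+O(x_{c_3}^{-2})$. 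Replacing $s_\ast,t-s_\ast$ by $t/2$ collapses the rational factors into $\bigl(\tfrac{x_{c_3}}{x_{c_3}+\beta vDt}\bigr)^2$ and replacing the exponent by $\tfrac{r_d^2+r_s^2}{2vDt}$ reproduces exactly $u^a_m(t)$; all three error sources (the $\hat{\mathcal{K}}_3$/erfc correction, the symmetrization of the exponent, and the Laplace remainder) are relative corrections of order at most $x_{c_3}^{-1}$, giving the stated $O\bigl(u^a_m(t)\,x_{c_3}^{-1}\bigr)$.

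The main obstacle is the rigorous, \emph{uniform} (in $t$) control of the $\hat{\mathcal{K}}_3$ factors: they depend on $x_{c_3}$ through the erfc term and must be expanded uniformly for $s\in(0,t)$ — including the endpoint regimes, where it is the Gaussian weight rather than $\hat{\mathcal{K}}_3$ that forces smallness — and then pulled out of an integral against a sharply peaked weight with a quantitative, not merely leading-order, error estimate. By contrast the L\'evy-convolution identity handles the remaining integral exactly, and the hypothesis \eqref{uma_constr} converts the saddle exponent into the symmetric form with only $O(x_{c_3}^{-2})$ loss.
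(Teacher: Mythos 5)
The paper never proves this lemma: it is imported wholesale from the authors' earlier work, cited as \cite[Theorem 2.2 and Remark 2.3]{Chen2023}, so there is no in-paper argument to compare yours against. Your blind derivation is a sound, self-contained route, and its key ingredients check out. Writing $r_d=|x_d-x_c|$, $r_s=|x_s-x_c|$, $p=r_d^2/(4vD)$, $q=r_s^2/(4vD)$ as you do, the stable-$\tfrac12$ convolution identity
\[
\int_0^t \big((t-s)s\big)^{-\frac32}e^{-\frac{p}{t-s}-\frac{q}{s}}\,{\rm d}s
=2\sqrt{\pi vD}\left(\frac{1}{r_d}+\frac{1}{r_s}\right)t^{-\frac32}e^{-\frac{(r_d+r_s)^2}{4vDt}}
\]
is correct (it follows from $\int_0^\infty \tau^{-3/2}e^{-c/\tau-\sigma\tau}{\rm d}\tau=\sqrt{\pi/c}\,e^{-2\sqrt{c\sigma}}$ and inversion, exactly as you say), and it reproduces the prefactor of \eqref{defIt} with no approximation, so the only genuine asymptotics left are the $\mathrm{erfc}$ expansion of $\hat{\mathcal K}_3$ and the detachment of the slowly varying factors at the saddle $s_*=r_st/(r_d+r_s)$. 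You have also isolated exactly what hypothesis \eqref{uma_constr} is for: it converts the true saddle exponent $(r_d+r_s)^2/(4vDt)$ into the symmetric $(r_d^2+r_s^2)/(2vDt)$ of \eqref{defIt} and collapses the two rational factors at $s_*$, $t-s_*$ into $\big(x_{c_3}/(x_{c_3}+\beta vDt)\big)^2$; this structural point is not explained in the present paper at all.

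One caveat on your error bookkeeping, which is where the real remaining work lies (and which you yourself flag as the main obstacle): several of your $O(x_{c_3}^{-2})$ claims silently treat $t$ as bounded. The hypothesis gives $|r_d-r_s|\le Ct/(r_d+r_s)\lesssim t/x_{c_3}$, not $\lesssim x_{c_3}^{-1}$; consequently the exponent-symmetrization error is $e^{(r_d-r_s)^2/(4vDt)}-1=O(t/x_{c_3}^2)$ and the $\mathrm{erfc}$ correction is $O(vDt/x_{c_3}^2)$. This matters because the lemma is applied downstream (Theorem \ref{solbehaviortheorem2}, and the peak-time equation \eqref{NonlinearScheme}) at times $t\sim\lambda k^{-1/2}\sim x_{c_3}$, where all of these degrade to exactly the advertised $O(x_{c_3}^{-1})$ with no room to spare. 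Worse, in that regime the naive bound you propose for detaching $\hat{\mathcal K}_3$ --- Lipschitz constant $\lesssim\beta vD/x_{c_3}$ times window width $\sim t^{3/2}\sqrt{vD}/x_{c_3}$ --- yields only a relative $O(x_{c_3}^{-1/2})$; to recover $O(x_{c_3}^{-1})$ you must exploit the near-vanishing of the first (odd) moment of the concentrated weight about $s_*$, i.e.\ carry out a genuine second-order Laplace expansion rather than a first-order Lipschitz estimate. So your outline is correct and arguably more transparent than a citation, but the uniform-in-$t$ quantitative step is not optional fine print; it is the substance of the proof.
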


\medskip
In the next lemma, we provide asymptotic behaviors of some integrals, which will be used to derive the asymptotic behavior of $\int_0^t u_m^a(s)\,{\rm d}s$ for $\lambda\gg1$.
\begin{lemma}\label{intlemma}
Let $\lambda >0$. Assume $f_\lambda \in C^1[\delta_1,\infty)$ for some $\delta_1>0$, and there exists $C>0$ such that 
\begin{equation}
\begin{split}
\label{fcondition}
| \partial_s f_\lambda(s) | \le C \lambda^{-1}
|  f_\lambda(s) | \quad {\rm for} \quad s \ge \delta_1.
\end{split}
\end{equation}
Then, for any $\delta_2 >\delta_1$,
\begin{equation}
\label{twointe}
\begin{split}
&{\rm i)}\; \int_{\delta_1}^{\delta_2} e^{-\lambda \eta} f_\lambda(\eta) \; {\rm d}\eta = 
\lambda^{-1}f_\lambda(\delta_1) e^{-\lambda \delta_1} + O\left( \lambda^{-3} f_\lambda(\delta_1) e^{-\lambda \delta_1}\right) \quad {\rm as} \quad \lambda \gg 1,\\
&{\rm ii)}\; \int_{\delta_1}^{\delta_2} e^{-\lambda \eta} (\eta-\delta_1)^{-\frac{1}{2}} f_\lambda(\eta) \; {\rm d}\eta = 
\pi^\frac{1}{2}\lambda^{-\frac{1}{2}}f_\lambda(\delta_1) e^{-\lambda \delta_1} + O\left( \lambda^{-\frac{5}{2}}  f_\lambda(\delta_1) e^{-\lambda \delta_1}\right) \quad {\rm as} \quad \lambda \gg 1.
\end{split}
\end{equation}
\end{lemma}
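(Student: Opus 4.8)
The plan is to treat both integrals in \eqref{twointe} as Laplace-type integrals whose mass, as $\lambda \gg 1$, concentrates near the left endpoint $\eta = \delta_1$, so that to leading order $f_\lambda$ may be replaced by its endpoint value $f_\lambda(\delta_1)$. The first preliminary step is to convert the pointwise derivative bound \eqref{fcondition} into an a priori size estimate for $f_\lambda$ itself. Since $\tfrac{d}{ds}\lvert f_\lambda(s)\rvert \le \lvert f_\lambda'(s)\rvert \le C\lambda^{-1}\lvert f_\lambda(s)\rvert$ wherever $f_\lambda \neq 0$, a Gronwall argument yields $\lvert f_\lambda(\eta)\rvert \le \lvert f_\lambda(\delta_1)\rvert\, e^{C(\eta-\delta_1)/\lambda}$ on $[\delta_1,\delta_2]$ (together with the matching lower bound), so on the fixed compact interval $[\delta_1,\delta_2]$ the factor $e^{C(\eta-\delta_1)/\lambda}$ is bounded and tends to $1$ as $\lambda\to\infty$; in particular $f_\lambda$ keeps a constant sign and is comparable to $f_\lambda(\delta_1)$ uniformly in $\lambda$. (If $f_\lambda(\delta_1)=0$ the same bound forces $f_\lambda\equiv 0$ and both identities are trivial.) This comparability is the device that lets me treat the $\lambda$-dependent $f_\lambda$ as essentially frozen at $f_\lambda(\delta_1)$.

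For \eqref{twointe}~i) I would integrate by parts once, taking $dv = e^{-\lambda\eta}\,{\rm d}\eta$ and $u=f_\lambda(\eta)$. The boundary contribution at $\eta=\delta_1$ produces exactly the claimed main term $\lambda^{-1}f_\lambda(\delta_1)e^{-\lambda\delta_1}$, while the boundary term at $\eta=\delta_2$ carries the factor $e^{-\lambda\delta_2}=e^{-\lambda(\delta_2-\delta_1)}e^{-\lambda\delta_1}$, which (using $f_\lambda(\delta_2)=O(f_\lambda(\delta_1))$ from the a priori bound) is smaller than any power of $\lambda^{-1}$ times $f_\lambda(\delta_1)e^{-\lambda\delta_1}$ and is therefore absorbed into the $O(\lambda^{-3}f_\lambda(\delta_1)e^{-\lambda\delta_1})$ error. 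The residual integral $\lambda^{-1}\int_{\delta_1}^{\delta_2}e^{-\lambda\eta}f_\lambda'(\eta)\,{\rm d}\eta$ is controlled by \eqref{fcondition} by $C\lambda^{-2}\int_{\delta_1}^{\delta_2}e^{-\lambda\eta}\lvert f_\lambda(\eta)\rvert\,{\rm d}\eta$, and the a priori bound shows $\int_{\delta_1}^{\delta_2}e^{-\lambda\eta}\lvert f_\lambda(\eta)\rvert\,{\rm d}\eta = O(\lambda^{-1}\lvert f_\lambda(\delta_1)\rvert e^{-\lambda\delta_1})$; hence the residual is $O(\lambda^{-3}f_\lambda(\delta_1)e^{-\lambda\delta_1})$, matching the stated error.

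For \eqref{twointe}~ii) the weight $(\eta-\delta_1)^{-1/2}$ is singular but integrable, so I would shift $\eta=\delta_1+\tau$ to factor out $e^{-\lambda\delta_1}$ and then split $f_\lambda(\delta_1+\tau)=f_\lambda(\delta_1)+\int_0^{\tau}f_\lambda'(\delta_1+r)\,{\rm d}r$. The constant part, after extending the $\tau$-integral from $[0,\delta_2-\delta_1]$ to $[0,\infty)$ at the cost of an exponentially small tail, yields $f_\lambda(\delta_1)\int_0^\infty e^{-\lambda\tau}\tau^{-1/2}\,{\rm d}\tau = \pi^{1/2}\lambda^{-1/2}f_\lambda(\delta_1)$ via $\Gamma(1/2)=\sqrt{\pi}$, the claimed leading term. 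For the remainder, \eqref{fcondition} and the a priori bound give $\bigl\lvert\int_0^{\tau}f_\lambda'(\delta_1+r)\,{\rm d}r\bigr\rvert \le C\lambda^{-1}\tau\,\lvert f_\lambda(\delta_1)\rvert$, so the remainder is bounded by $C\lambda^{-1}\lvert f_\lambda(\delta_1)\rvert\int_0^\infty e^{-\lambda\tau}\tau^{1/2}\,{\rm d}\tau = O(\lambda^{-5/2}f_\lambda(\delta_1)e^{-\lambda\delta_1})$ using $\Gamma(3/2)=\tfrac{1}{2}\sqrt{\pi}$, again matching.

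The main obstacle is that $f_\lambda$ carries its own $\lambda$-dependence, so the classical Watson's lemma does not apply verbatim; everything hinges on showing that \eqref{fcondition} is simultaneously strong enough to freeze $f_\lambda$ at $f_\lambda(\delta_1)$ to leading order and to demote each correction by exactly one power of $\lambda^{-1}$, so that the remainders fall precisely at orders $\lambda^{-3}$ and $\lambda^{-5/2}$. The delicate bookkeeping is thus verifying that the Gronwall comparison, the exponentially small endpoint and tail contributions, and the Gamma-integral normalizations combine to give these exact error exponents rather than something weaker.
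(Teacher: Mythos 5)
Your proof is correct and follows essentially the same route as the paper: integration by parts for part i), and the shift $\eta=\delta_1+\tau$ plus a first-order Taylor expansion with the Gaussian/Gamma integrals $\Gamma(1/2)=\pi^{1/2}$, $\Gamma(3/2)=\tfrac12\pi^{1/2}$ for part ii). The only difference is that you make explicit, via the Gronwall bound $|f_\lambda(\eta)|\le |f_\lambda(\delta_1)|\,e^{C(\eta-\delta_1)/\lambda}$, the comparability of $f_\lambda$ with its endpoint value, which the paper uses implicitly when it replaces $|f_\lambda|$ at interior points (e.g.\ at the mean-value point in its $I_2$ estimate and at $\delta_2$ in the boundary term) by $|f_\lambda(\delta_1)|$.
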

\begin{proof}
By \eqref{fcondition}, we obtain for any $\delta_2>\delta_1$
\begin{equation*}
\begin{split}
\int_{\delta_1}^{\delta_2} e^{-\lambda \eta} f_\lambda(\eta) \; {\rm d}\eta 
&= \lambda^{-1}f_\lambda(\delta_1) e^{-\lambda \delta_1} -  \lambda^{-1}
e^{-\lambda \delta_2} f_\lambda(\delta_2) + \lambda^{-1} 
 \int_{\delta_1}^{\delta_2} e^{-\lambda \eta} \partial_\eta f_\lambda(\eta) \; {\rm d}\eta \\
&= \lambda^{-1}f_\lambda(\delta_1) e^{-\lambda \delta_1} +
O\left( \lambda^{-3} f_\lambda(\delta_1) e^{-\lambda \delta_1}\right)
\end{split}
\end{equation*}
as $\lambda \gg 1$, which implies ${\rm i)}$ in \eqref{twointe}.
On the other hand, we obtain
\begin{equation}
\label{Ionetwo}
\begin{split}
\int_{\delta_1}^{\delta_2} e^{-\lambda \eta} (\eta-\delta_1)^{-\frac{1}{2}} f_\lambda(\eta) \; {\rm d}\eta 
&= e^{-\lambda \delta_1} \int_0^{\delta_2-\delta_1} e^{-\lambda \eta} \eta^{-\frac{1}{2}} f_\lambda(\eta+\delta_1) \; {\rm d}\eta \\
&=e^{-\lambda \delta_1} \left[ \int_0^{\delta_2-\delta_1} e^{-\lambda \eta} \eta^{-\frac{1}{2}} f_\lambda(\delta_1) \; {\rm d}\eta + \int_0^{\delta_2-\delta_1} e^{-\lambda \eta} \eta^\frac{1}{2} \partial_\eta f_\lambda(\tilde{\eta}_{\delta_1}) \; {\rm d}\eta \right] \\
&:=  I_1 + I_2 
\end{split}
\end{equation}
for some $\tilde{\eta}_{\delta_1}>\delta_1$.
Since
\begin{equation*}
\begin{split}
I_1 & =  f_\lambda(\delta_1)  e^{-\lambda \delta_1} \int_0^{\delta_2-\delta_1} e^{-\lambda \eta} \eta^{-\frac{1}{2}}  \; {\rm d}\eta=  f_\lambda(\delta_1)  e^{-\lambda \delta_1} \left[ \int_0^\infty - \int_{\delta_2-\delta_1}^\infty \right] e^{-\lambda \eta} \eta^{-\frac{1}{2}} \;{\rm d}\eta \\
& = 2 \lambda^{-\frac{1}{2}}f_\lambda(\delta_1) e^{-\lambda \delta_1} \int_0^\infty 
e^{-\eta^2} \; {\rm d}\eta + O\left(\lambda^{-1}f_\lambda(\delta_1) e^{-\lambda \delta_2} \right)  \quad \mbox{as} \quad \lambda \gg 1,
\end{split}
\end{equation*}
and by \eqref{fcondition}, we obtain
\begin{equation*}
\begin{split}
| I_2 | & \le C \lambda^{-1}|f_\lambda(\delta_1)| e^{-\lambda \delta_1}  \int_0^{\delta_2-\delta_1} e^{-\lambda \eta} \eta^{\frac{1}{2}}  \; {\rm d}\eta \\
&=  C |f_\lambda(\delta_1)|\lambda^{-1} e^{-\lambda \delta_1} \left[ -\lambda^{-1}
e^{-\lambda (\delta_2-\delta_1)} (\delta_2-\delta_1)^\frac{1}{2}
+ \frac{\lambda^{-1}}{2}  \int_0^{\delta_2-\delta_1} e^{-\lambda \eta} \eta^{-\frac{1}{2}}  \; {\rm d}\eta  \right] \\
&=O\left(\lambda^{-\frac{5}{2}} f_\lambda(\delta_1) e^{-\lambda \delta_1} \right) \quad \mbox{as} \quad \lambda \gg 1.
\end{split}
\end{equation*}
This together with \eqref{Ionetwo} implies
$$
\int_{\delta_1}^{\delta_2} e^{-\lambda \eta} (\eta-\delta_1)^{-\frac{1}{2}} f_\lambda(\eta) \; {\rm d}\eta = 
\pi^\frac{1}{2}\lambda^{-\frac{1}{2}}f_\lambda(\delta_1) e^{-\lambda \delta_1} + O\left( \lambda^{-\frac{5}{2}} f_\lambda(\delta_1)e^{-\lambda \delta_1}\right)
$$
as $\lambda \gg 1$, and the proof of Lemma \ref{intlemma} is complete.
\end{proof}
We are ready to derive the asymptotic behavior of the time integration for $u^a_m$ in \eqref{defIt}, which is closely related to the behavior of the solution $U_m$ to \eqref{um_sys}.
In the next Theorem \ref{solbehaviortheorem2}, we obtain the asymptotic of
$$
\int_0^t u^a_m(s) \; {\rm d}s  = \int_0^t e^{-ks - \frac{\lambda^2}{s}} f(s) \; {\rm d}s 
\quad \mbox{as} \quad \lambda \gg 1,
$$
where
\begin{equation}
\begin{split}
\label{defk}
&k := \mu_a v, \quad \lambda^2 := \frac{|x_d-x_c|^2+|x_s-x_c|^2}{2vD}, \\
&f(s) := \frac{c}{8\pi^\frac{5}{2} v^\frac{1}{2} D^\frac{3}{2} } \left( \frac{1}{|x_d-x_c|} + \frac{1}{|x_s-x_c|} \right)  s^{-\frac{3}{2}}\left(\frac{x_{c_3}}{x_{c_3}+\beta v D s}\right)^2.
\end{split}
\end{equation}
\begin{theorem}\label{solbehaviortheorem2}
Assume $t>\lambda k^{-\frac{1}{2}}$.
Then $u^a_m$ of \eqref{defIt} satisfies
\begin{equation}
\label{integralbehavior}
\int_0^t u^a_m(s) \; {\rm d}s = k^{-\frac{3}{4}} \left( \pi \lambda \right)^\frac{1}{2}
u^a_m(\lambda k^{-\frac{1}{2}}) + O \left( \lambda^{-\frac{3}{2}} u^a_m(\lambda k^{-\frac{1}{2}}) \right) 
\end{equation}
as $\lambda \gg 1$, where $k>0$ and $\lambda$ are as in \eqref{defk}.
\end{theorem}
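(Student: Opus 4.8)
The plan is to treat $\int_0^t u^a_m(s)\,{\rm d}s = \int_0^t e^{-ks-\lambda^2/s}f(s)\,{\rm d}s$ (with $f$ as in \eqref{defk}) as a Laplace-type integral and to reduce it to the form handled by Lemma \ref{intlemma}(ii). The phase $\phi(s):=-(ks+\lambda^2/s)$ is smooth on $(0,\infty)$, decays to $-\infty$ at both ends, and has a unique critical point at $s_\ast:=\lambda k^{-1/2}$, where $\phi(s_\ast)=-2\lambda\sqrt k$ and $\phi''(s_\ast)=-2k^{3/2}\lambda^{-1}$. Since $t>s_\ast=\lambda k^{-1/2}$ by hypothesis, this maximum is interior, so for $\lambda\gg1$ the whole mass of the integral concentrates near $s_\ast$; the formal Laplace value $f(s_\ast)e^{\phi(s_\ast)}\sqrt{2\pi/|\phi''(s_\ast)|}$ already equals $k^{-3/4}(\pi\lambda)^{1/2}u^a_m(s_\ast)$, which is the asserted leading term of \eqref{integralbehavior}.

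To make this rigorous through Lemma \ref{intlemma}, I would pass to the phase variable $\eta:=\lambda^{-1}(ks+\lambda^2/s)=ks/\lambda+\lambda/s$, so that $e^{-ks-\lambda^2/s}=e^{-\lambda\eta}$ and $\eta$ attains its minimum $\delta_1:=2\sqrt k$ precisely at $s=s_\ast$. A short computation gives the clean identity $\eta-\delta_1=k(s-s_\ast)^2/(\lambda s)$, which shows that on each of the two monotone branches $s\in(0,s_\ast]$ and $s\in[s_\ast,t]$ the map $s\mapsto\eta$ is a diffeomorphism onto an interval with left endpoint $\delta_1$ and that ${\rm d}s/{\rm d}\eta$ blows up like $(\eta-\delta_1)^{-1/2}$ at the saddle. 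This is exactly the singular weight appearing in \eqref{twointe}.

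The decisive step is to add the two branches before invoking the lemma. For fixed $\eta>\delta_1$ the preimages $s_\pm$ are the two roots of $\frac{k}{\lambda}s^2-\eta s+\lambda=0$, so by Vieta $s_++s_-=\lambda\eta/k$ and $s_+s_-=\lambda^2/k$ depend smoothly (indeed polynomially) on $\eta$, while their difference $s_+-s_-=\frac{\lambda}{k}\sqrt{\eta^2-4k}$ carries the square-root singularity. The combined amplitude $f_\lambda(\eta):=(\eta-\delta_1)^{1/2}\big[f(s_+)|{\rm d}s_+/{\rm d}\eta|+f(s_-)|{\rm d}s_-/{\rm d}\eta|\big]$ then equals $\frac{\sqrt\lambda}{\sqrt k}\big[G(s_+)+G(s_-)\big]$ with $G(s):=f(s)s^{3/2}/(s_\ast+s)$. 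Being a symmetric function of $s_\pm$, this is even in $\sqrt{\eta^2-4k}$ and hence a smooth function of $\eta$ on $[\delta_1,\eta(t)]$: the half-integer powers of $(\eta-\delta_1)$ cancel between the branches. Evaluating at coalescence gives $f_\lambda(\delta_1)=\lambda k^{-3/4}f(s_\ast)$. Writing the integral as $\int_{\delta_1}^{\eta(t)}e^{-\lambda\eta}(\eta-\delta_1)^{-1/2}f_\lambda(\eta)\,{\rm d}\eta$ plus the tail of the lower branch with $\eta>\eta(t)$ (which is $O(e^{-\lambda\eta(t)})$ and hence exponentially negligible against $e^{-\lambda\delta_1}$), Lemma \ref{intlemma}(ii) applied with $\delta_2=\eta(t)$ yields $\pi^{1/2}\lambda^{-1/2}f_\lambda(\delta_1)e^{-\lambda\delta_1}=k^{-3/4}(\pi\lambda)^{1/2}u^a_m(s_\ast)$.

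The hard part is controlling the remainder, i.e. verifying the hypothesis \eqref{fcondition} for the combined amplitude $f_\lambda$ so that the $O(\lambda^{-5/2}\cdots)$ tail of Lemma \ref{intlemma}(ii) collapses to the claimed $O(\lambda^{-3/2}u^a_m(s_\ast))$. This is where the geometric scaling must enter: because $s_\ast=\lambda k^{-1/2}$ and the depth $x_{c_3}$ is comparable to $\lambda$, one has $f'(s)/f(s)=-\frac{3}{2s}-2\beta vD/(x_{c_3}+\beta vDs)=O(\lambda^{-1})$ for $s$ near $s_\ast$, and this smallness, transported through the substitution and the branch summation, is what must bound $\partial_\eta f_\lambda$ relative to $f_\lambda$. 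Carrying out these derivative estimates uniformly on $[\delta_1,\eta(t)]$ (together with the bookkeeping of the exponentially small tail and of the $O(\ell^{-3})$ remainders from \eqref{Um}) is the delicate, computation-heavy part; the Laplace/Watson mechanism itself becomes routine once the phase substitution and the Vieta cancellation are in place.
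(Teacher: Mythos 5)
Your proposal is correct, and at its core it is the same proof as the paper's: the same phase substitution (your $\eta=ks/\lambda+\lambda/s$ is exactly the paper's $\eta=k\zeta+\zeta^{-1}$ after $s=\lambda\zeta$), the same saddle value $\delta_1=2\sqrt{k}$, and the same key tool, Lemma \ref{intlemma}, supplying the Watson-type factor $\pi^{1/2}\lambda^{-1/2}$. The difference is in the decomposition. The paper keeps the two branches $\zeta_\pm$ separate and splits each Jacobian ${\rm d}\zeta_\pm=\frac{1}{2k}\bigl[1\pm\eta(\eta^2-4k)^{-1/2}\bigr]{\rm d}\eta$ into a smooth and a singular part, producing four integrals: part i) of \eqref{twointe} handles the smooth pieces, whose leading terms cancel between branches, and part ii) handles the singular pieces, whose leading terms add up to $k^{-3/4}(\pi\lambda)^{1/2}u_m^a(\lambda k^{-1/2})$. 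You instead sum the branches before invoking the lemma, and your identities check out: $|{\rm d}s_\pm/{\rm d}\eta|=s_\pm/\sqrt{\eta^2-4k}$, $\sqrt{\eta+2\sqrt{k}}=\sqrt{k/\lambda}\,(\sqrt{s_+}+\sqrt{s_-})$, hence the combined amplitude equals $\sqrt{\lambda/k}\,[G(s_+)+G(s_-)]$ with $G(s)=f(s)s^{3/2}/(s_*+s)$ and value $\lambda k^{-3/4}f(s_*)$ at coalescence, so a single application of part ii) reproduces both the leading term and the $O(\lambda^{-3/2}u_m^a(\lambda k^{-1/2}))$ error in \eqref{integralbehavior}. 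This buys you something real: your combined amplitude is genuinely smooth up to $\eta=\delta_1$ (the odd powers of $\sqrt{\eta^2-4k}$ cancel by symmetry), whereas the paper's per-branch amplitudes $f(\lambda\zeta_\pm(\eta))$ have derivatives blowing up like $(\eta-\delta_1)^{-1/2}$ at the saddle, so the hypotheses $f_\lambda\in C^1[\delta_1,\infty)$ and \eqref{fcondition} are not literally satisfied in the paper's application; your branch summation repairs this. The price is that the derivative bound \eqref{fcondition} for the combined amplitude still has to be established, which you honestly flag as the remaining computation; the paper is no more rigorous on this point (it asserts the condition holds ``by \eqref{defk}'' without verification), so this is a shared rather than a new gap. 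One small correction: the $O(\ell^{-3})$ remainders from \eqref{Um} play no role here --- this theorem concerns only $\int_0^t u_m^a(s)\,{\rm d}s$, and $\ell$ does not appear in it.
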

\begin{proof}
Since $t>\lambda k^{-\frac{1}{2}}$, we separate the integral into two parts
\begin{equation}
\label{intseparation}
\begin{split}
\int_0^t u^a_m(s) \; {\rm d}s  &= \int_0^t e^{-ks - \frac{\lambda^2}{s}} f(s) \; {\rm d}s
= \int_0^{\frac{t}{\lambda}}  e^{-\lambda\left(k\zeta + \zeta^{-1}\right)} \lambda f(\lambda \zeta) \; {\rm d}\zeta \\
&= \lambda \left[\int_0^{k^{-\frac{1}{2}}} + \int_{k^{-\frac{1}{2}}}^{\frac{t}{\lambda}} \right]
 e^{-\lambda\left(k\zeta + \zeta^{-1}\right)}  f(\lambda \zeta) \; {\rm d}\zeta. 
\end{split}
\end{equation}
Set a new variable $\eta := k\zeta + \zeta^{-1}$ having 
$$
\zeta_{\pm} := \frac{\eta \pm \sqrt{\eta^2-4k}}{2k} \quad \mbox{with} \quad
{\rm d} \zeta_{\pm} = \frac{1}{2k} \left[ 1 \pm \eta (\eta^2-4k)^{-\frac{1}{2}} \right] {\rm d} \eta.
$$
We obtain
\begin{equation}
\begin{split}
\int_0^t u^a_m(s) \; {\rm d}s &= - \lambda\int_{2\sqrt{k}}^\infty e^{-\lambda \eta}  f(\lambda \zeta_{-}) \; {\rm d} \zeta_{-} +  \lambda \int_{2\sqrt{k}}^{\eta(\frac{t}{\lambda})} e^{-\lambda \eta}  f(\lambda \zeta_{+}) \; {\rm d} \zeta_{+} \\
&= - \frac{\lambda}{2k} \int_{2\sqrt{k}}^\infty e^{-\lambda \eta}  f(\lambda \zeta_{-}) \; {\rm d} \eta 
+ \frac{\lambda}{2k} \int_{2\sqrt{k}}^\infty e^{-\lambda \eta}  f(\lambda \zeta_{-}) 
\eta (\eta^2-4k)^{-\frac{1}{2}}\; {\rm d} \eta \\
&\quad + \frac{\lambda}{2k} \int_{2\sqrt{k}}^{\eta(\frac{t}{\lambda})} e^{-\lambda \eta}  f(\lambda \zeta_{+}) \; {\rm d} \eta
+ \frac{\lambda}{2k} \int_{2\sqrt{k}}^{\eta (\frac{t}{\lambda})} e^{-\lambda \eta}  f(\lambda \zeta_{+}) 
\eta (\eta^2-4k)^{-\frac{1}{2}}\; {\rm d} \eta.
\end{split}
\end{equation}
We apply Lemma \ref{intlemma} with $f_\lambda(s) = f(\lambda s)$ for $s\ge \delta_1 = 2\sqrt{k}$ to obtain  
\begin{equation}
\begin{split}
\int_0^t u^a_m(s) \; {\rm d}s &= - \frac{1}{2k}
f(\lambda k^{-\frac{1}{2}} ) e^{-2 \lambda \sqrt{k} }
+ \frac{\pi^\frac{1}{2}\lambda^\frac{1}{2}}{2k}
f(\lambda k^{-\frac{1}{2}} ) 2\sqrt{k} (4\sqrt{k})^{-\frac{1}{2}}
e^{-2 \lambda \sqrt{k} } + \frac{1}{2k}
f(\lambda k^{-\frac{1}{2}} ) e^{-2 \lambda \sqrt{k} } \\
&\qquad + \frac{\pi^\frac{1}{2}\lambda^\frac{1}{2}}{2k} 
f( \lambda k^{-\frac{1}{2}}) 2\sqrt{k} (4\sqrt{k})^{-\frac{1}{2}}
e^{-2 \lambda \sqrt{k} }
 + O\left( \lambda^{-\frac{3}{2}} f( \lambda k^{-\frac{1}{2}}) e^{-2 \lambda \sqrt{k} }\right) \\
 &=  k^{-\frac{3}{4}} \left( \pi \lambda \right)^\frac{1}{2} f( \lambda k^{-\frac{1}{2}})
 e^{-2 \lambda \sqrt{k} }  + O\left( \lambda^{-\frac{3}{2}} f( \lambda k^{-\frac{1}{2}}) e^{-2 \lambda \sqrt{k} }\right) \\
\end{split}
\end{equation}
as $\lambda \gg 1$. We remark $\zeta_\pm(2\sqrt{k}) = k^{-\frac{1}{2}}$, and by \eqref{defk}, $f_\lambda$ satisfies the condition \eqref{fcondition}.   
Since $u_m^a(\lambda k^{-\frac{1}{2}}) =  f( \lambda k^{-\frac{1}{2}})
 e^{-2 \lambda \sqrt{k} }$, we obtain  \eqref{integralbehavior}, and the proof of Theorem \ref{solbehaviortheorem2} is complete.
\end{proof}

\subsection{Multiple point targets}
In this subsection, we turn to consider the asymptotic behavior of $U_m$ defined by \eqref{expression of tilde um} for the multiple point targets $x_c^{(1)},\,x_c^{(2)},\,\cdots,\,x_c^{(J)}\in\Omega$. By using the principle of superposition, the zero lifetime solution, still denoted by $u_m$, can be defined by 
\begin{equation}
    u_m(x_d,\,t;\,x_s)=\sum_{j=1}^J u_m^{(j)}(x_d,\,t;\,x_s),
\end{equation}
where $x_d,\,x_s\in\partial\Omega$, each $u_m^{(j)},\,j=1,\,2,\,\cdots,\,J,$ is defined by \eqref{um-point2} replacing $x_c$ with $x_c^{(j)}$.
\medskip

The following Corollary gives the asymptotic behaviors of $u_m$ and its time integration. 
\begin{corollary}\label{cor_asym}
Let $x_d,\,x_s\in\partial\Omega$ and $x_c^{(1)},\,x_c^{(2)},\,\cdots,\,x_c^{(J)}\in\Omega$. Assume that there exists some $x_c^{(l)},\,l=1,\,2,\,\cdots,\,J,$ such that 
\begin{equation}\label{equ_dis_rela_mult}
|x_d-x_c^{(l)}|^2+|x_s-x_c^{(l)}|^2<|x_d-x_c^{(j)}|^2+|x_s-x_c^{(j)}|^2,\;\;1\leq j\neq l\leq J,
\end{equation}
and
$$\Big| |x_d-x_c^{(l)}|^2-|x_s-x_c^{(l)}|^2 \Big| \le C^{(l)} t \quad {\rm for\; some} \quad C^{(l)}>0.$$
Then, we have the asymptotic formula of $u_m$
\begin{align}\label{1st_asym_ums}
&u_m(t ;\,x_c^{(1)},\,x_c^{(2)},\,\cdots,\,x_c^{(J)})
=u_m^{a,(l)}(t) \big[1+ o(1)\big]
\end{align}
as $x_{c_3}^{(l)}\gg 1$, where $u_m^{a,(l)}$ can be defined by \eqref{defIt} replacing $x_c$ with $x_c^{(l)}$.

Moreover, assuming that $t>\lambda^{(l)} k^{-\frac{1}{2}}$, then
\begin{equation}
\label{integralbehavior_s}
\sum_{j=1}^J\int_0^t u^{a,(j)}_m(s) \; {\rm d}s = k^{-\frac{3}{4}} \left( \pi \lambda^{(l)} \right)^\frac{1}{2}
u^a_m(\lambda^{(l)} k^{-\frac{1}{2}})\big[1+ o(1)\big]
\end{equation}
as $\lambda^{(l)} \gg 1$, where $\lambda^{(l)}$ can be defined by \eqref{defk} replacing $x_c$ with $x_c^{(l)}$.
\end{corollary}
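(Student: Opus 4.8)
The plan is to deduce both statements from the single-target results, Lemma \ref{solbehaviortheorem1} and Theorem \ref{solbehaviortheorem2}, via superposition and the exponential separation of the targets encoded in \eqref{equ_dis_rela_mult}. Throughout, write $a_j := |x_d - x_c^{(j)}|$ and $b_j := |x_s - x_c^{(j)}|$, so that the distinguished index $l$ is the strict minimizer of $a_j^2 + b_j^2$ and $(\lambda^{(l)})^2 = (a_l^2 + b_l^2)/(2vD)$ is the smallest among the $(\lambda^{(j)})^2$.

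For \eqref{1st_asym_ums}, I would start from $u_m = \sum_{j=1}^J u_m^{(j)}$ and single out the term $j = l$. The hypothesis $|a_l^2 - b_l^2| \le C^{(l)} t$ is exactly the condition \eqref{uma_constr} for the $l$-th target, so Lemma \ref{solbehaviortheorem1} gives $u_m^{(l)}(t) = u_m^{a,(l)}(t)\,[1 + O((x_{c_3}^{(l)})^{-1})]$. It then remains to show $u_m^{(j)} = o(u_m^{a,(l)})$ for each $j \neq l$. Since \eqref{uma_constr} is not assumed for these indices, I would not invoke the Lemma but instead extract from the integral representation \eqref{um-point2} a one-sided estimate whose leading exponential factor is $\exp(-\frac{(a_j + b_j)^2}{4vDt})$ (the value of the Gaussian exponent at its saddle point $s^\ast = b_j t/(a_j + b_j)$). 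Comparing with $u_m^{a,(l)} \sim \exp(-\frac{a_l^2 + b_l^2}{2vDt})$ and using the strict gap \eqref{equ_dis_rela_mult}, the ratio $u_m^{(j)}/u_m^{a,(l)}$ carries a decaying exponential; since the gap in the exponents is of order $(\lambda^{(l)})^2$ while the relevant time scale is $t \sim \lambda^{(l)} k^{-1/2}$, this factor tends to $0$, and summing the $J - 1$ subordinate terms yields \eqref{1st_asym_ums}.

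For \eqref{integralbehavior_s}, I would again split $\sum_j \int_0^t u_m^{a,(j)}$ into the $l$-th term and the rest. Because $t > \lambda^{(l)} k^{-1/2}$ by hypothesis, Theorem \ref{solbehaviortheorem2} applies verbatim to the $l$-th term and produces the claimed main term $k^{-3/4}(\pi\lambda^{(l)})^{1/2} u_m^{a,(l)}(\lambda^{(l)} k^{-1/2})$. The remaining terms require care because the theorem's hypothesis $t > \lambda^{(j)} k^{-1/2}$ may fail for $j \neq l$ (indeed $\lambda^{(j)} > \lambda^{(l)}$). I would bound $\int_0^t u_m^{a,(j)} \le \int_0^\infty u_m^{a,(j)}$ and run the same Laplace-type argument as in Theorem \ref{solbehaviortheorem2} with the upper limit at infinity, obtaining a quantity comparable to the peak value $u_m^{a,(j)}(\lambda^{(j)} k^{-1/2}) \sim \exp(-2\lambda^{(j)}\sqrt{k})$. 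Dividing by the $l$-th main term and using $\lambda^{(j)} > \lambda^{(l)}$ leaves the factor $\exp(-2(\lambda^{(j)} - \lambda^{(l)})\sqrt{k})$, which vanishes as $\lambda^{(l)} \gg 1$ since the gap $\lambda^{(j)} - \lambda^{(l)}$ grows; collecting the estimates gives \eqref{integralbehavior_s}.

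The main obstacle in both parts is the control of the subordinate indices $j \neq l$, for which the hypotheses of the single-target results are unavailable: Lemma \ref{solbehaviortheorem1} needs \eqref{uma_constr}, and Theorem \ref{solbehaviortheorem2} needs $t > \lambda^{(j)} k^{-1/2}$. The resolution in each case is to replace the sharp asymptotics by a one-sided upper estimate — from \eqref{um-point2} for the solution itself, and from enlarging the integration domain to $(0,\infty)$ for the time integral — and then to absorb the contributions using the strict separation \eqref{equ_dis_rela_mult}. The one delicate point is that the raw Gaussian saddle exponent $(a_j + b_j)^2/4$ differs from the written exponent $(a_j^2 + b_j^2)/2$ of $u_m^{a,(j)}$ unless $a_j \approx b_j$; this discrepancy is negligible precisely in the deep-target regime $x_{c_3}^{(l)} \gg 1$ with boundary sources and detectors, where $a_j \approx b_j$ for every target, so that the minimizer of the distance sum is also the dominant index. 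One finally checks that the algebraic prefactors cannot overturn this exponential domination.
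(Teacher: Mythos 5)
Your route differs from the paper's in a substantive way. The paper's proof simply invokes Lemma \ref{solbehaviortheorem1} for \emph{every} target $j$ (tacitly assuming that the constraint \eqref{uma_constr} and the deep-target regime $x_{c_3}^{(j)}\gg 1$ hold for all $j$, not just for $l$), so that each term carries the exponential factor $\exp\bigl(-\tfrac{|x_d-x_c^{(j)}|^2+|x_s-x_c^{(j)}|^2}{2vDt}\bigr)$; the strict inequality \eqref{equ_dis_rela_mult} then yields the domination of the $l$-th term at once, and the integral statement is dispatched with ``by the same proof of Theorem \ref{solbehaviortheorem2}''. You instead apply the lemma only to the index $l$ (which is all the stated hypotheses permit) and try to control the subordinate indices by one-sided bounds: a saddle-point estimate on \eqref{um-point2} for the solution, and an extension of the integration range to $(0,\infty)$ for the time integral. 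Your handling of the integral part is in fact more explicit than the paper's one-line remark, and for that part no difficulty arises because you are bounding the profiles $u_m^{a,(j)}$, whose exponent is genuinely $(|x_d-x_c^{(j)}|^2+|x_s-x_c^{(j)}|^2)/(2vDt)$.

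The gap is in the first part. Writing $a_j:=|x_d-x_c^{(j)}|$, $b_j:=|x_s-x_c^{(j)}|$, your saddle-point bound for $u_m^{(j)}$ carries the exponent $(a_j+b_j)^2/(4vDt)$, and you must compare it with the main term's exponent $(a_l^2+b_l^2)/(2vDt)$. Hypothesis \eqref{equ_dis_rela_mult} gives only $a_j^2+b_j^2>a_l^2+b_l^2$, and since $(a_j+b_j)^2/4$ can be as small as roughly $(a_j^2+b_j^2)/4$ (when one of $a_j,b_j$ is negligible), the needed inequality $(a_j+b_j)^2/4>(a_l^2+b_l^2)/2$ can fail: e.g.\ a shallow target $j$ essentially at the source, with $b_j\approx 0$ and $a_j^2=1.5\,(a_l^2+b_l^2)$, satisfies \eqref{equ_dis_rela_mult} yet makes your upper bound (and indeed $u_m^{(j)}$ itself, since the saddle bound is tight up to algebraic factors) exponentially \emph{larger} than $u_m^{a,(l)}$. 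You noticed this and proposed to dismiss it because ``$a_j\approx b_j$ for every target in the deep-target regime'', but the hypotheses only make $x_{c_3}^{(l)}$ large; nothing forces the other targets to be deep or symmetric, so this step does not follow. In effect, closing the gap requires exactly the tacit strengthening the paper makes (constraint \eqref{uma_constr} and large depth for all $j$), and once that is granted, invoking Lemma \ref{solbehaviortheorem1} for every $j$ is both simpler and sharper than the saddle bound. A further, smaller point common to both your argument and the paper's: the strict inequality \eqref{equ_dis_rela_mult} alone gives a positive but possibly non-divergent gap between exponents (and between $\lambda^{(j)}$ and $\lambda^{(l)}$), whereas the $o(1)$ conclusions need the gap to diverge as $x_{c_3}^{(l)}$ (resp.\ $\lambda^{(l)}$) grows, e.g.\ under proportional scaling of the configuration; your phrase ``since the gap grows'' asserts this without justification.
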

\begin{proof}
As generalizations of Lemma \ref{solbehaviortheorem1}, for each $x_c^{(j)},\,j=1,\,2,\,\cdots,\,J$ such that $\Big| |x_d-x_c^{(j)}|^2-|x_s-x_c^{(j)}|^2 \Big| \le C^{(j)} t$ for some $C^{(j)}>0$, then $u_m^{(j)}$ satisfies 
$$u_m^{(j)}(t) = u^{a,(j)}_m(t) +O \left( u^{a,(j)}_m(t) (x_{c_3}^{(j)})^{{-1}} \right),\;\;x_{c_3}^{(j)}\gg1.$$
It is obvious that the dominant part of each $u_m^{a,(j)}$ is the exponentially small term $\exp\left( -\frac{|x_d-x_c^{(l)}|^2+|x_s-x_c^{(l)}|^2}{2vDt} \right)$. Under the condition \eqref{equ_dis_rela_mult}, we obtain 
$$\sum_{j=1}^J u_m^{a,(j)}(t)=u_m^{a,(l)}(t)\big[1+ o(1)\big],$$
which implies \eqref{1st_asym_ums}. Based on this and by the same proof of Theorem \ref{solbehaviortheorem2}, we obtain \eqref{integralbehavior_s}.
\end{proof}

\begin{remark}\label{rem_mult_tp}
    The asymptotic behaviors \eqref{1st_asym_ums} and \eqref{integralbehavior_s} are related to $x_c^{(l)}$ due to \eqref{equ_dis_rela_mult}. In other words, we can derive the same asymptotic behaviors of $u_m$ and its time integration for each $x_c^{(l)},\,l=1,\,2,\,\cdots,\,J$,
    if there always exist $x_d,\,x_s\in\partial\Omega$ such that \eqref{equ_dis_rela_mult} for each $x_c^{(l)},\,l=1,\,2,\,\cdots,\,J$. 
\end{remark}

\begin{comment}
\begin{remark} When $t \le \lambda k^{-\frac{1}{2}}$, there is only one integration in \eqref{intseparation},
that is,
\begin{equation}
\begin{split}
\int_0^t u^a_m(s) \; {\rm d}s &= \lambda \int_0^{\frac{t}{\lambda}}
 e^{-\lambda\left(k\zeta + \zeta^{-1}\right)}  f(\lambda \zeta) \; {\rm d}\zeta 
 =- \lambda\int_{\frac{\eta t}{\lambda}}^\infty e^{-\lambda \eta}  f(\lambda \zeta_{-}) \; {\rm d} \zeta_{-}(\eta).
\end{split}
\end{equation}
By Lemma \ref{intlemma} with $\delta_1 = \frac{\eta t}{\lambda}$, we obtain
\begin{equation}
\label{oppositecase}
\begin{split}
\int_0^t u^a_m(s) \; {\rm d}s &=
\frac{\pi^\frac{1}{2}\lambda^\frac{1}{2}}{2k}
\left( \frac{kt}{\lambda} + \frac{\lambda}{t} \right)
\left( \frac{kt}{\lambda} + \frac{\lambda}{t} + 2\sqrt{k} \right)^{-\frac{1}{2}} u^a_m(t) + O\left( u^a_m(t) \right)
\\
&= \frac{\pi^\frac{1}{2}}{2k} \lambda t^{-\frac{1}{2}} u^a_m(t) + O\left( u^a_m(t) \right)
\end{split}
\end{equation}
as $\lambda \to \infty$.
\end{remark}
\end{comment}
\section{%Inversion scheme
Approximate peak time}
\label{sec_peak}
In this section, we derive approximate peak time equations and define approximate peak times for the cases of single point and multiple point targets, respectively. The accuracy of the approximate peak times is numerically verified.

\medskip

By \eqref{App_Um}, we look for an approximate peak time for $U_m$ as $t$, which satisfies 
\begin{equation}\label{t-deri_Um}
\begin{split}
\partial_t U_m^a(t) = \ell^{-1} u_m (t) - \ell^{-2} \int_0^t  u_m (s) \,{\rm d}s =0.
\end{split}
\end{equation}
In the following, approximating $u_m$ by \eqref{1st_asym_um} and \eqref{1st_asym_ums}, we consider the cases of single point and multiple point targets, respectively.

\begin{comment}
When $\lambda \ge tk^{\frac{1}{2}}$, by \eqref{oppositecase} we obtain 
\begin{equation}\label{NonlinearScheme2}
\begin{split}
\lambda = 2k \pi^{-\frac{1}{2}} \ell t^\frac{1}{2}.
\end{split}
\end{equation}
\end{comment}

\subsection{Approximate peak time for single point target}

For the single point target, replacing $u_m$ with $u_m^a$ in \eqref{t-deri_Um} leads to 
\begin{equation}\label{t-deri_Um1}
\begin{split}
\int_0^t  u_m^a (s) \,{\rm d}s = \ell u_m^a (t),
\end{split}
\end{equation}
where $u_m^a$ is as in \eqref{defIt}. 
By \eqref{defk} and Theorem \ref{solbehaviortheorem2}, \eqref{t-deri_Um1} becomes
$$
\ell e^{-\frac{kt^2+\lambda^2}{t}} f(t) = \ell u_m^a (t) \sim
k^{-\frac{3}{4}} \left( \pi \lambda \right)^{\frac{1}{2}} f( \lambda k^{-\frac{1}{2}})
 e^{-2 \lambda \sqrt{k} }, 
$$
and from
$$
\left(\frac{x_{c_3}+\beta vDt }{x_{c_3}+\beta vD \lambda k^{-\frac{1}{2}}}\right)^2 \to 1 \quad \mbox{as} \quad 
x_{c_3} \gg 1,
$$
we can approximate \eqref{t-deri_Um1} by
\begin{equation}\label{NonlinearScheme}
\begin{split}
\lambda e^{-\frac{(\sqrt{k}t-\lambda)^2}{t}} = \pi^{\frac{1}{2}} \ell^{-1} t^{\frac{3}{2}}.
\end{split}
\end{equation}
Based on \eqref{NonlinearScheme}, we first define the approximate peak time as the root of the approximate peak time equation 
\begin{equation}\label{peak time eq}
P(t;\lambda)=0,
\end{equation}
where $P(t;\lambda)$ is given as
\begin{equation}\label{func_peak}
P(t;\,\lambda):=\lambda e^{-\frac{(\sqrt{k}t-\lambda)^2}{t}} -\pi^{\frac{1}{2}} \ell^{-1} t^{\frac{3}{2}},\,t>\lambda k^{-\frac{1}{2}}.
\end{equation}
Then, we study the unique existence of the approximate peak time and numerically test its applicability to different physical situations. 

\begin{theorem}\label{thm_uniq_apppeak}
Let $x_c\in\Omega$ and $x_d,\,x_s\in\partial\Omega$, 
which are assumed to satisfy  
the constraint \eqref{uma_constr}, $x_{c_3}\gg 1$, and
$\ell>\pi^{\frac{1}{2}}k^{-\frac{3}{4}}\lambda^{\frac{1}{2}}$. Then,  there exists a unique root $t_{peak}^a$ of $P(t)$ such that
$$P(t)>0,\;\;\lambda k^{-\frac{1}{2}}<t<t_{peak}^a\;\;{\rm and}\;\; P(t)<0,\;\;t>t_{peak}^a.$$
%and write $P(t)=P(t;\,\lambda)$ to clarify that $P(t)$ depends on $\lambda=\lambda(x_d,\,x_s;\,x_c)$.
\end{theorem}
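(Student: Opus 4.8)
The plan is to reduce the statement to strict monotonicity of $P$ together with a sign check at the two ends of the interval $\left(\lambda k^{-\frac{1}{2}},\infty\right)$. First I would expand the square in the exponent and divide by $t$ to write
\begin{equation*}
\frac{(\sqrt{k}t-\lambda)^2}{t} = kt - 2\sqrt{k}\lambda + \frac{\lambda^2}{t} =: g(t),
\end{equation*}
so that $g'(t) = k - \lambda^2 t^{-2}$ vanishes exactly at $t = \lambda k^{-\frac{1}{2}}$ and is strictly positive for $t>\lambda k^{-\frac{1}{2}}$. Thus $g$ is strictly increasing on the domain of interest, the first term $\lambda e^{-g(t)}$ is strictly decreasing there, and the subtracted term $\pi^{\frac{1}{2}}\ell^{-1}t^{\frac{3}{2}}$ is strictly increasing. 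Differentiating directly gives $P'(t) = -\lambda g'(t) e^{-g(t)} - \tfrac{3}{2}\pi^{\frac{1}{2}}\ell^{-1}t^{\frac{1}{2}} < 0$, so $P$ is strictly decreasing.

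Next I would compute the boundary behavior. Since $g(\lambda k^{-\frac{1}{2}})=0$, the value at the left end is
\begin{equation*}
P(\lambda k^{-\frac{1}{2}}) = \lambda - \pi^{\frac{1}{2}}\ell^{-1}\lambda^{\frac{3}{2}}k^{-\frac{3}{4}} = \lambda\left(1 - \pi^{\frac{1}{2}}\ell^{-1}\lambda^{\frac{1}{2}}k^{-\frac{3}{4}}\right),
\end{equation*}
which is strictly positive exactly when $\ell > \pi^{\frac{1}{2}}k^{-\frac{3}{4}}\lambda^{\frac{1}{2}}$, i.e. under the stated hypothesis. As $t\to\infty$ we have $g(t)\to\infty$, so $\lambda e^{-g(t)}\to 0$ while $\pi^{\frac{1}{2}}\ell^{-1}t^{\frac{3}{2}}\to\infty$, whence $P(t)\to-\infty$. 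Continuity together with strict monotonicity and the intermediate value theorem then yields a unique zero $t_{peak}^a$ with $P>0$ on $\left(\lambda k^{-\frac{1}{2}}, t_{peak}^a\right)$ and $P<0$ on $\left(t_{peak}^a,\infty\right)$.

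I do not anticipate a genuine obstacle: once the exponent is rewritten, monotonicity is immediate from $g'(t)=k-\lambda^2 t^{-2}>0$, and the hypothesis on $\ell$ is calibrated precisely to make the left-endpoint value positive. The only minor care needed is that $\lambda k^{-\frac{1}{2}}$ is not in the open domain of $P$, so I would apply the intermediate value theorem on a closed subinterval $\left[\lambda k^{-\frac{1}{2}}+\varepsilon, T\right]$, using $\lim_{t\downarrow\lambda k^{-\frac{1}{2}}} P(t)>0$ and $P(T)<0$ for small $\varepsilon>0$ and large $T$, with uniqueness over the whole interval guaranteed by $P'<0$. Note finally that the constraint \eqref{uma_constr} and $x_{c_3}\gg1$ are not used in this existence/uniqueness argument; they enter only through the asymptotic derivation of \eqref{NonlinearScheme} that defines $P$ in \eqref{func_peak}.
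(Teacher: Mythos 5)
Your proof is correct and follows essentially the same route as the paper: both establish $\partial_t P<0$ on $(\lambda k^{-\frac{1}{2}},\infty)$ (your $g'(t)=k-\lambda^2t^{-2}>0$ is exactly the factor $(kt^2-\lambda^2)/t^2$ in the paper's derivative), then check $P(\lambda k^{-\frac{1}{2}};\lambda)=\lambda\bigl(1-\pi^{\frac{1}{2}}\ell^{-1}k^{-\frac{3}{4}}\lambda^{\frac{1}{2}}\bigr)>0$ under the hypothesis on $\ell$ and $P\to-\infty$ as $t\to\infty$, concluding by monotonicity and the intermediate value theorem. Your added remarks (working on a closed subinterval away from the left endpoint, and that \eqref{uma_constr} and $x_{c_3}\gg1$ are only needed for the derivation of $P$, not for this argument) are minor but accurate refinements of the same argument.
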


\begin{proof}
To show the unique existence of $t_{peak}^a$, let us first examine the monotonicity of $P(t;\,\lambda)$ for $t>\lambda k^{-\frac{1}{2}}$. 
Consider
\begin{equation*}
\partial_t P=\lambda e^{-\frac{(\sqrt{k}t-\lambda)^2}{t}}\times \left(-\frac{(kt^2-\lambda^2)}{t^2} \right) -\frac{3}{2}\pi^{\frac{1}{2}} \ell^{-1} t^{\frac{1}{2}}.
\end{equation*}
%It is reasonable to assume that the physical parameter $k=\mu_a v<4$ in a practical model. 
Then, we have 
$\partial_t P<0$ for $t>\lambda k^{-\frac{1}{2}}$, which means that $P(t;\,\lambda)$ is a monotonically decreasing function for $t>\lambda k^{-\frac{1}{2}}$. Hence, we only need to show that $P(t;\,\lambda)>0$ at $t=\lambda k^{-\frac{1}{2}}$ and $\underset{t\rightarrow\infty}{\lim}P(t;\lambda)<0$. The second one is obvious, and the first one follows from
$$P(\lambda k^{-\frac{1}{2}};\,\lambda)=\lambda\left( 1-\pi^{\frac{1}{2}}\ell^{-1}k^{-\frac{3}{4}}\lambda^{\frac{1}{2}}\right).$$
The proof of Theorem \ref{thm_uniq_apppeak} is complete.
\end{proof} 

Next, we numerically examine the performance of the approximate peak time for different physical parameters $D,\,\mu_a,\,\ell$, and the depth of the target, $x_{c_3}$. We evaluate the relative error of the approximate peak time as 
\begin{equation}\label{relerr_tp}
RelErr_t:=\frac{|t_{peak}-t_{peak}^a|}{t_{peak}},
\end{equation}
where the peak time $t_{peak}$ and the approximate peak time $t_{peak}^a$  are computed from \eqref{expression of tilde um} and \eqref{peak time eq}, respectively. Here, we compute $U_m(x_d,\,t;\,x_s)$ using a numerical integration with a time step $0.1\,{\rm ps}$, then find the peak time $t_{peak}$ from the discretized time point, which gives the maximum of $U_m(x_d,\,t;\,x_s)$. Both $t_{peak}$, $t_{peak}^a$ and $RelErr_t$ depend on the physical parameters $v,\,D,\,\mu_a,\,\ell,\,\beta$ and the depth $x_{c_3}$. 
The parameters $D$, $\mu_a$, and $v$ depend on the biological tissue types and conditions. Since there is a large variety of the reported values of $D$ and $\mu_a $\cite{Taroni2002, Bashkatov2011}, we choose some representative values for evaluating how the approximation performs under the practical ranges of these parameters. We assume that the refractive index is a fixed value of 1.37 because biological tissues dominantly consist of water, resulting in $\beta$ only depending on $D$ and the fixed light speed of 0.219~mm/ps. The fluorescence lifetime $\ell$ depends on the fluorophore molecule and its environment, but the value is usually less than a few nano-seconds for typical organic fluorophore molecules \cite{Lakowicz1999}. The target depth  $x_{c_3}$ is limited by a detection limit of about $30\,{\rm mm}$ \cite{Nishimura2024}.
If no otherwise specified, we always set $ x_{c_3}=20\,{\rm mm}$ and 
\begin{equation}\label{phys_para}
v=0.219\, {\rm mm/ps},\quad D=1/3\,{\rm mm},\quad \mu_a=0.1\,{\rm mm^{-1}},\quad \beta=0.5493\, {\rm mm^{-1}},\quad \ell=1000\,{\rm ps},
\end{equation}
which are typical values in biological tissues.
In Figure \ref{fig_tps_diffpara},
we show the numerical results for fixed S-D pair $\{x_{d},\,x_{s}\}=\{(14,\,10,\,0),\,(6,\,10,\,0)\}$, the projected location of the target $x_{c}=(10,\,10,\,x_{c_3})$ and changed $x_{c_3},\,D,\,\mu_a,\,\ell$.

Figures \ref{fig_tps_diffpara} (a) and (d)  indicate that both the peak time and approximate peak time are increasing, while the differences are decreasing with the fluorescence lifetime $\ell>0$. The decrease of the relative errors of the approximated peak time is consistent with our approximation based on large $\ell \gg 1$ as in \eqref{Um}. Figures \ref{fig_tps_diffpara} (b) and (e) show that both the peak time, approximate peak time, and the relative errors of the approximate peak time are decreasing with respect to absorption coefficient $\mu_a>0$. 
%\red{Further, the error becomes large for larger values of $D$ at a smaller $\mu_{a}$, while the difference of the error becomes invisible at a larger $\mu_{a}$. These observed behaviors are complex, and it is difficult to interpret them mathematically.} 
For fixed $\mu_a$, the relative error becomes smaller for smaller $D$, which is consistent with our approximation based on large $\lambda\gg 1$ as in \eqref{integralbehavior}. Figure \ref{fig_tps_diffpara} (c) shows that both the peak time and approximate peak time are increasing with respect to the depth of point target $x_{c_3}>0$ because the distance to the target becomes large. Interestingly, the increase is almost linear with the depth, and this observation is consistent with other studies \cite{Hall2004, Hall2010}. However, Figure \ref{fig_tps_diffpara} (f) indicates no decrease in the relative error in spite of using the approximation given in \eqref{1st_asym_um}. This is the counterpart to our previous result on the approximation peak time for the case of zero fluorescence lifetime $\ell =0$.

\begin{figure}[htp]
\centering
\begin{tabular}{lll}
(a) & (b) & (c) \\
\includegraphics[width=0.33\textwidth]{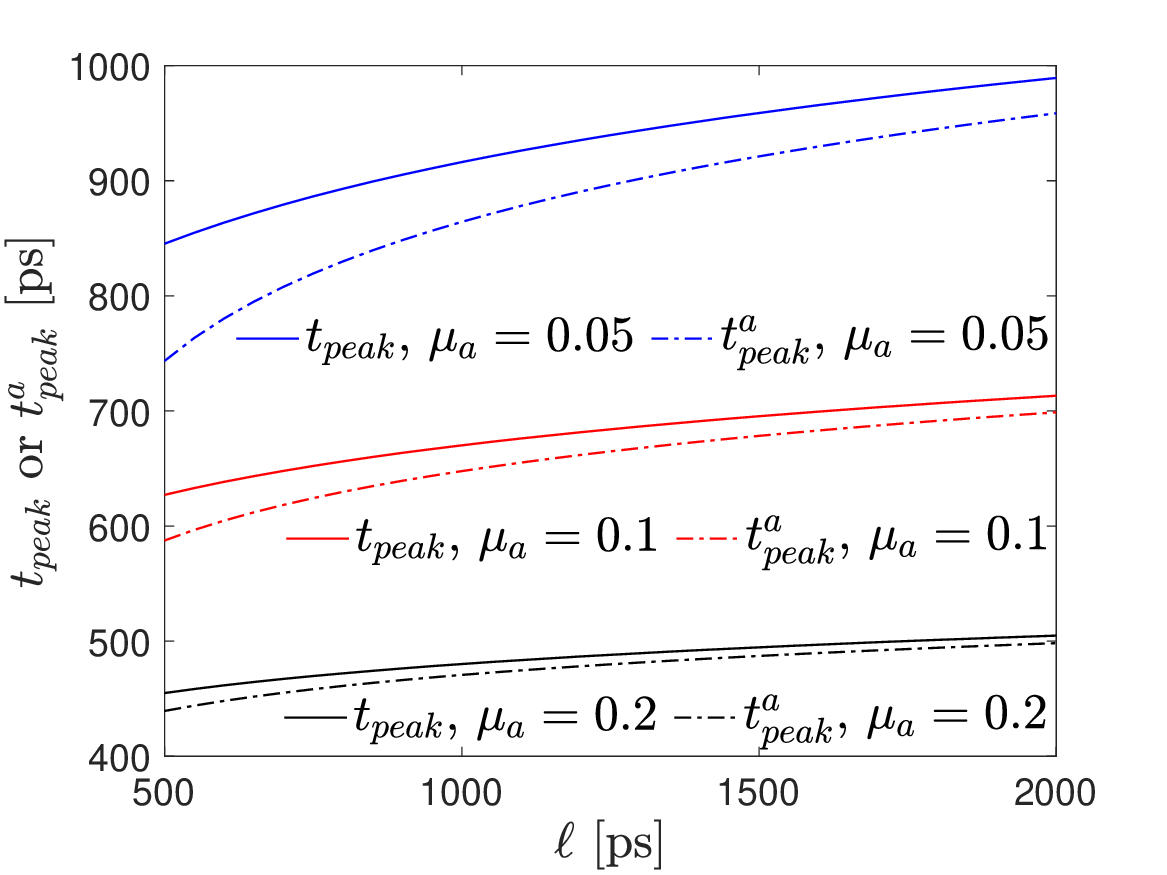}
& \includegraphics[width=0.33\textwidth]{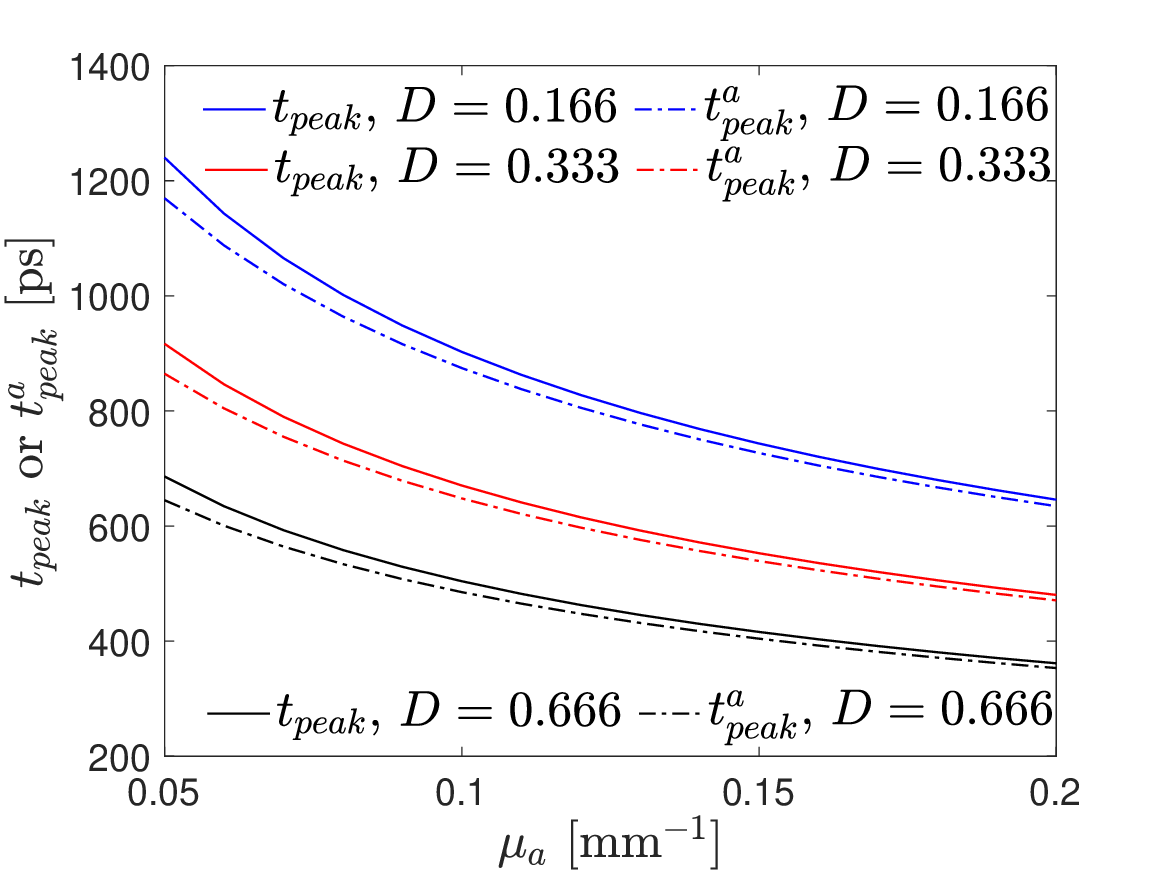}
& \includegraphics[width=0.33\textwidth]{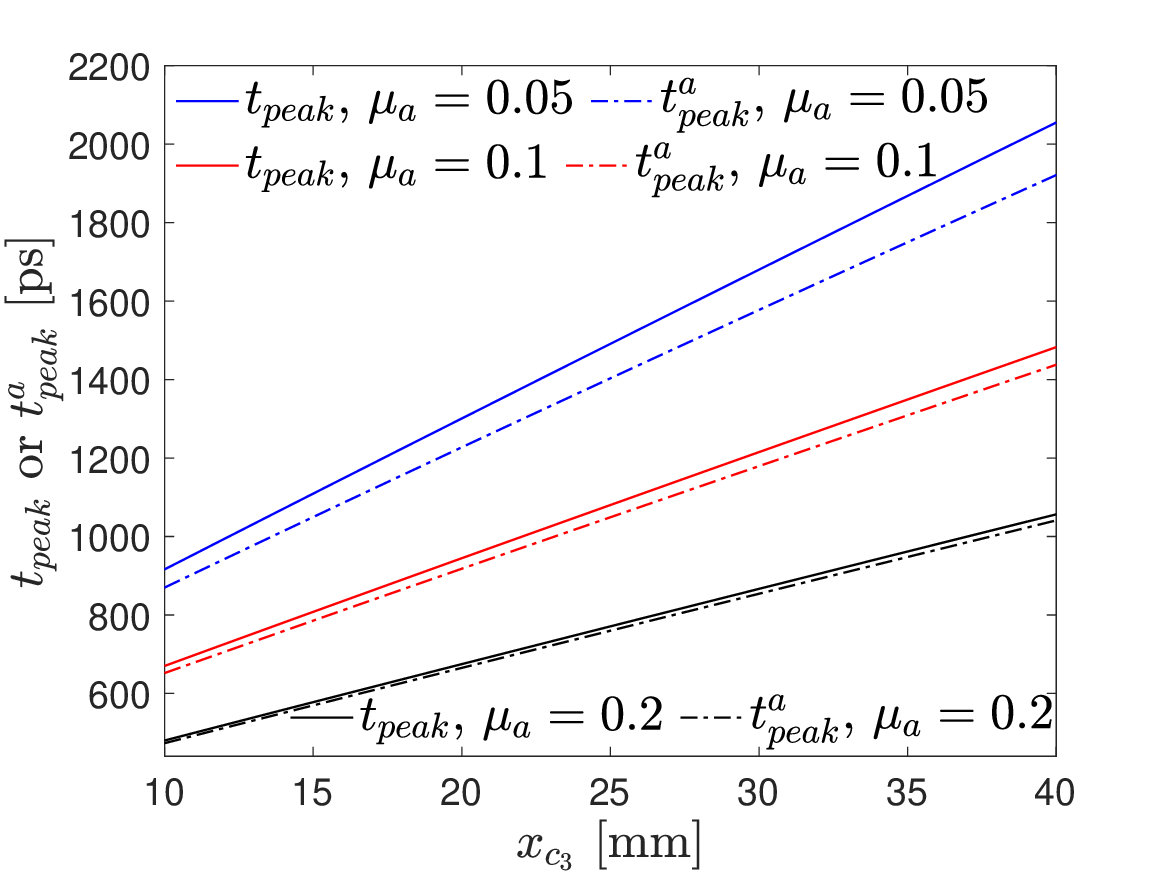}\\
(d) & (e) & (f) \\
\includegraphics[width=0.33\textwidth]{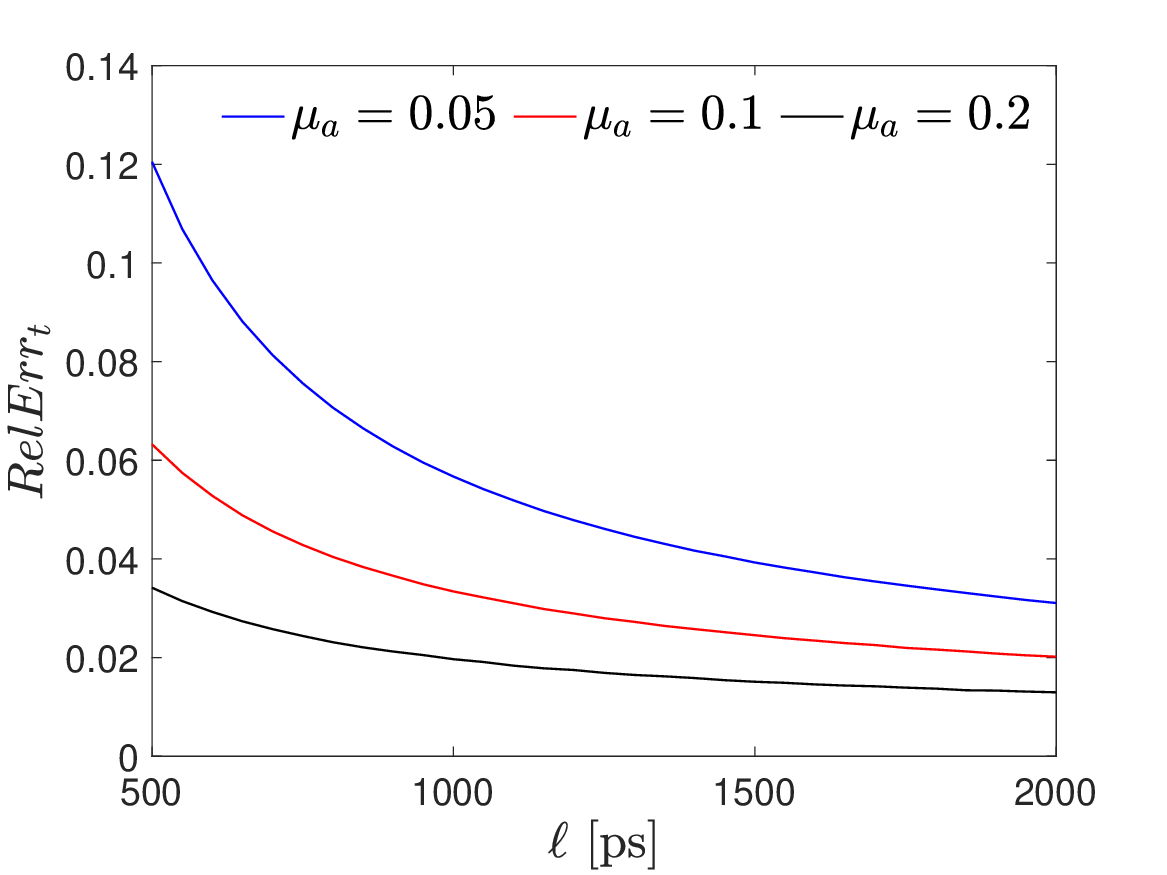}
&\includegraphics[width=0.33\textwidth]{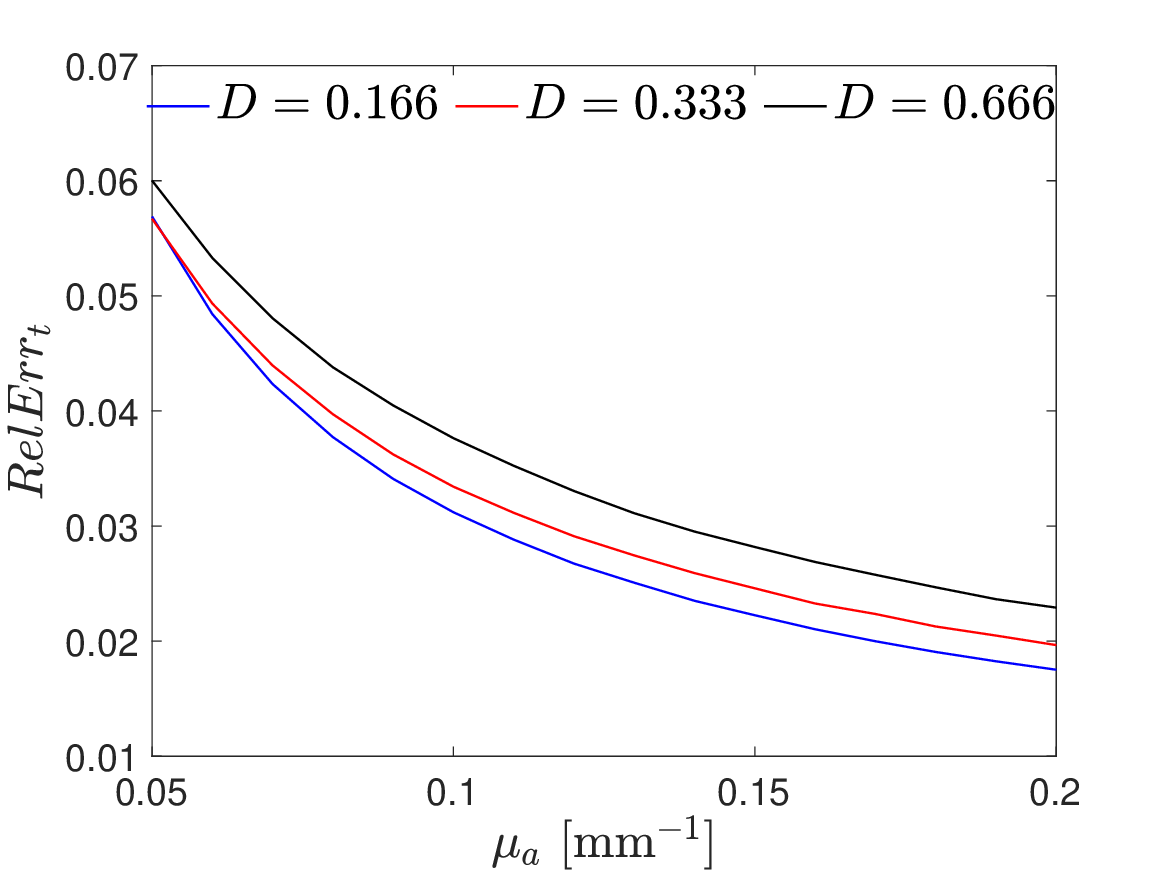}
&\includegraphics[width=0.33\textwidth]{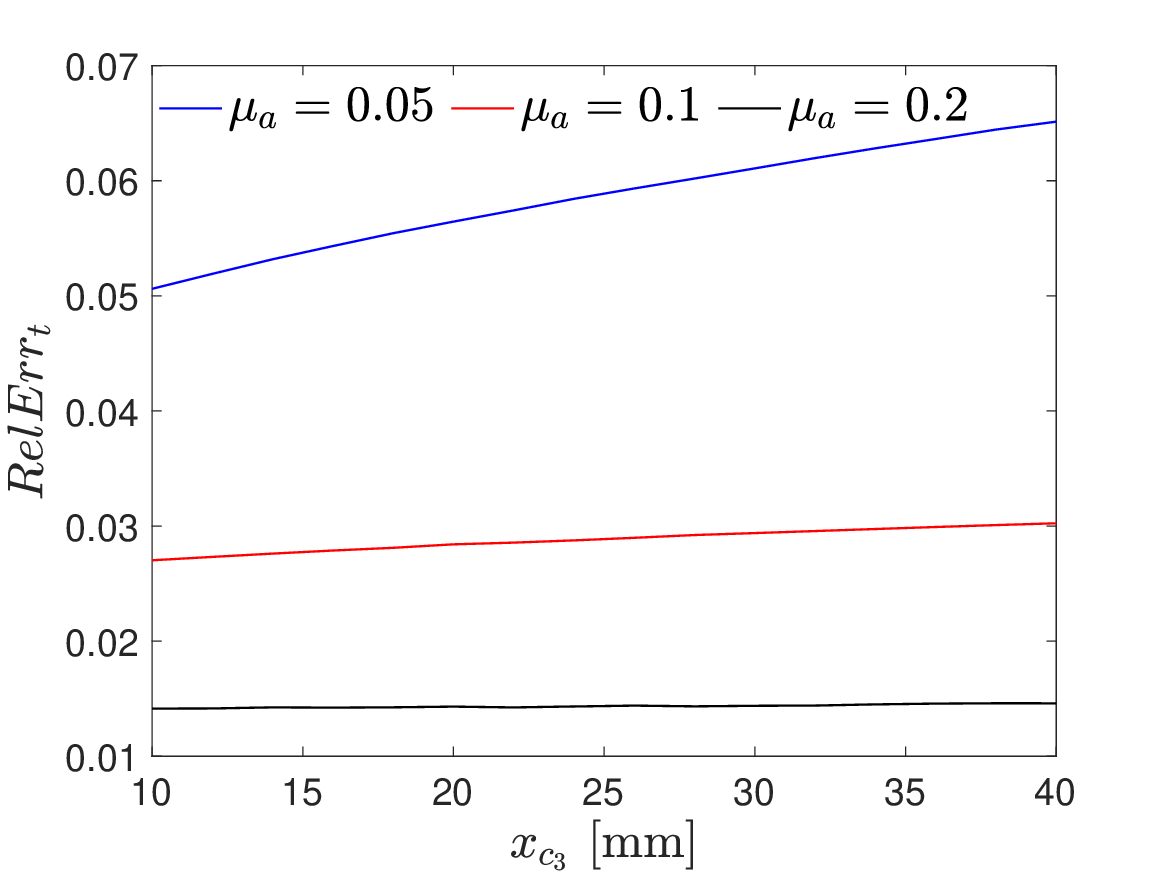}
\end{tabular}
\caption{Peak time, approximate peak time, and relative error for different physical parameters}
\label{fig_tps_diffpara}
\end{figure}

\subsection{Approximate peak time for multiple point targets}
\label{subsec_app_tps}
For multiple point targets $x_c^{(j)},\,j=1,\,2,\,\cdots,\,J$, recall Corollary \ref{cor_asym}.  Let $x_d,\,x_s\in\partial\Omega$ satisfying \eqref{equ_dis_rela_mult} for some $x_c^{(l)},\,l=1,\,2,\cdots,\,J$. Hence, we can define the approximate peak time $t_{peak}^{a,(l)}$ to $x_c^{(l)}$ as the root of the approximate peak time equation 
\begin{equation}\label{tp_formulas}
P^{(l)}(t,\,\lambda^{(l)})=0,
\end{equation}
where $P^{(l)}(t,\,\lambda^{(l)})$ can be defined by \eqref{func_peak} replacing $\lambda$ with $\lambda^{(l)}:= \left(\frac{|\hat{x}_d-x_c^{(l)}|^2+|\hat{x}_s-x_c^{(l)}|^2}{2vD}\right)^{\frac{1}{2}}$. From Theorem \ref{thm_uniq_apppeak}, there exists a unique root of $P^{(l)}(t;\,\lambda^{(l)})$ for $t\in (\lambda^{(l)}k^{-\frac{1}{2}},\,\infty)$. 
As mentioned by Remark \ref{rem_mult_tp}, we can define the unique approximate peak time $t_{peak}^{a,(l)}$ for each $x_c^{(l)},\,l=1,\,2,\,\cdots,\,J$, if $x_d,\,x_s\in\partial\Omega$ satisfy \eqref{equ_dis_rela_mult}. 

Next, we numerically verify the accuracy of each approximate peak time $t_{peak}^{a,(l)},\,l=1,\,2,\cdots,\,J,$ for different S-D pairs.
Let us assume there are two point targets, i.e., $J=2$ in \eqref{mult_targets}, with locations  $x_c^{(1)}=(5,\,10,\,20),\,x_c^{(2)}=(15,\,10,\,20)$. 
Setting the physical parameters as \eqref{phys_para},
the peak time $t_{peak}$ and its approximations $t_{peak}^{a,(l)},\,l=1,\,2,$ can be calculated for any S-D pair by using \eqref{expression of tilde um} and \eqref{tp_formulas}, respectively. Define a set of S-D pairs as
\begin{equation}\label{equ_gridpointSD}
\left\{\{x_d^{(m,n)},\,x_s^{(m,n)}\}:=\{(1+m,\,n,\,0),\,(-1+m,\,n,\,0)\},\;\;m,\,n=0,\,1,\,\cdots,\,20\right\}.
\end{equation}
Figure \ref{fig_Apptps_2020} (a), (b) and (c) plot the values of the peak time $t_{peak}(x_d^{(m,5)},\,x_s^{(m,5)})$ and the approximate peak times $t_{peak}^{a,(1)}(x_d^{(m,5)},\,x_s^{(m,5)}),\,t_{peak}^{a,(2)}(x_d^{(m,5)},\,x_s^{(m,5)})$ at $m=0,\,1,\,\cdots,\,20$, respectively. We find that $t_{peak}^{a,(1)}$ and $t_{peak}^{a,(2)}$ have symmetric shapes, and give a good approximation to $t_{peak}$ for $m<10$ and $m>10$, respectively, since the condition \eqref{equ_dis_rela_mult} is satisfied for $m<10$ and $m>10$ to $x_c^{(1)}$ and $x_c^{(2)}$, respectively. In other words, as long as the chosen S-D pair satisfies \eqref{equ_dis_rela_mult} to some $x_c^{(l)},\,l=1,\,2,\,\cdots,J$, the defined  $t_{peak}^{a,(l)}$ can approximate $t_{peak}$ very well. 

Let us define the approximate peak time for these two point targets as follows:
\begin{equation}\label{appro_peak_dis}
t_{peak}^a(x_d^{(m,n)},x_d^{(m,n)}):=
\begin{cases}
    t_{peak}^{a,(1)}(x_d^{(m,n)},x_d^{(m,n)}),\; & m=0,\,1,\,\cdots,\,10,\,n=0,\,1,\,\cdots,\,20,\\
    t_{peak}^{a,(2)}(x_d^{(m,n)},x_d^{(m,n)}),\; &m=11,\,12,\,\cdots,\,20,\,n=0,\,1,\,\cdots,\,20.
\end{cases}
\end{equation}
which is plotted for all S-D pairs \eqref{equ_gridpointSD} in Figure \ref{fig_Apptps_2020} (d).  Comparing with $t_{peak}$ shown in Figure \ref{fig_Apptps_2020} (e), they have the same shape. The relative error between $t_{peak}$ and $t_{peak}^{a}$ defined by \eqref{appro_peak_dis} is shown in Figure \ref{fig_Apptps_2020} (f), which implies the accuracy of $t_{peak}^a$ defined by \eqref{appro_peak_dis}.

\begin{figure}[htp]
\centering
\begin{tabular}{lll}
(a) & (b) & (c) \\
\includegraphics[width=0.33\textwidth]{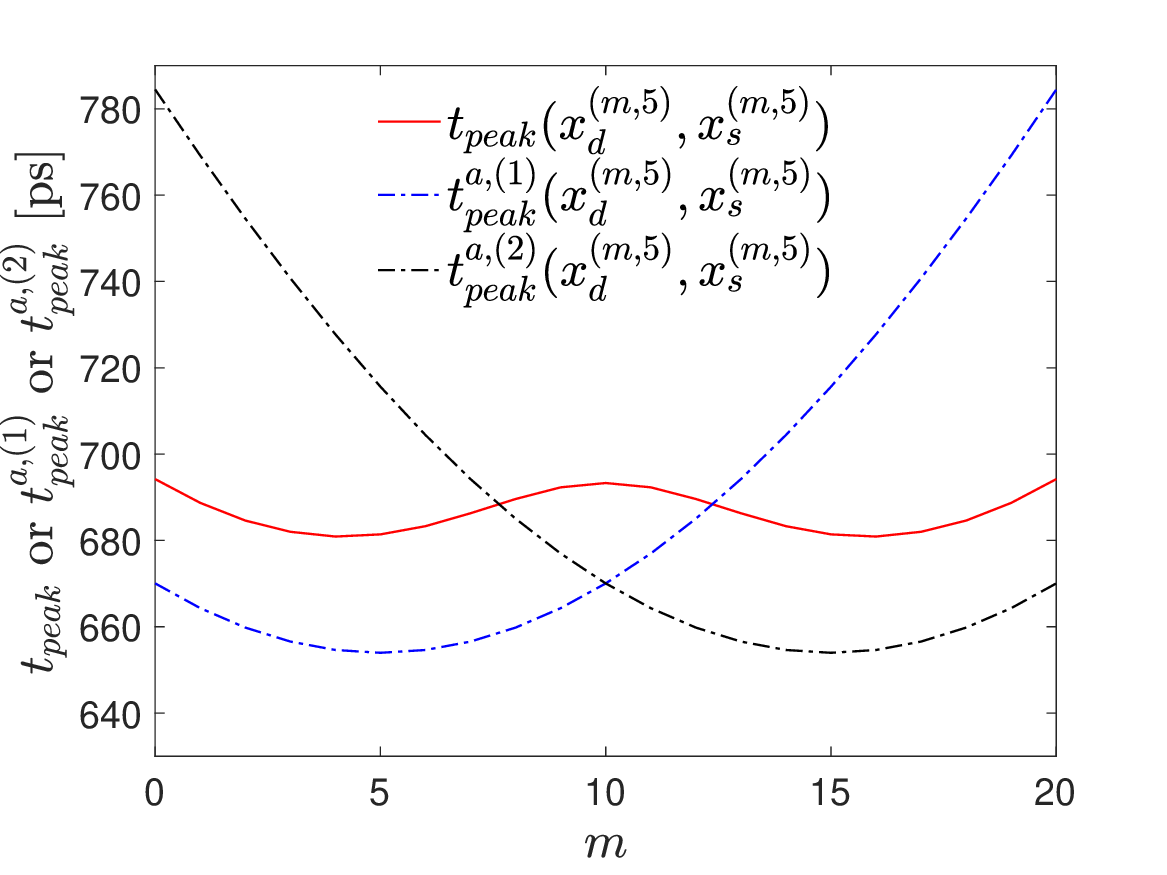}
& \includegraphics[width=0.33\textwidth]{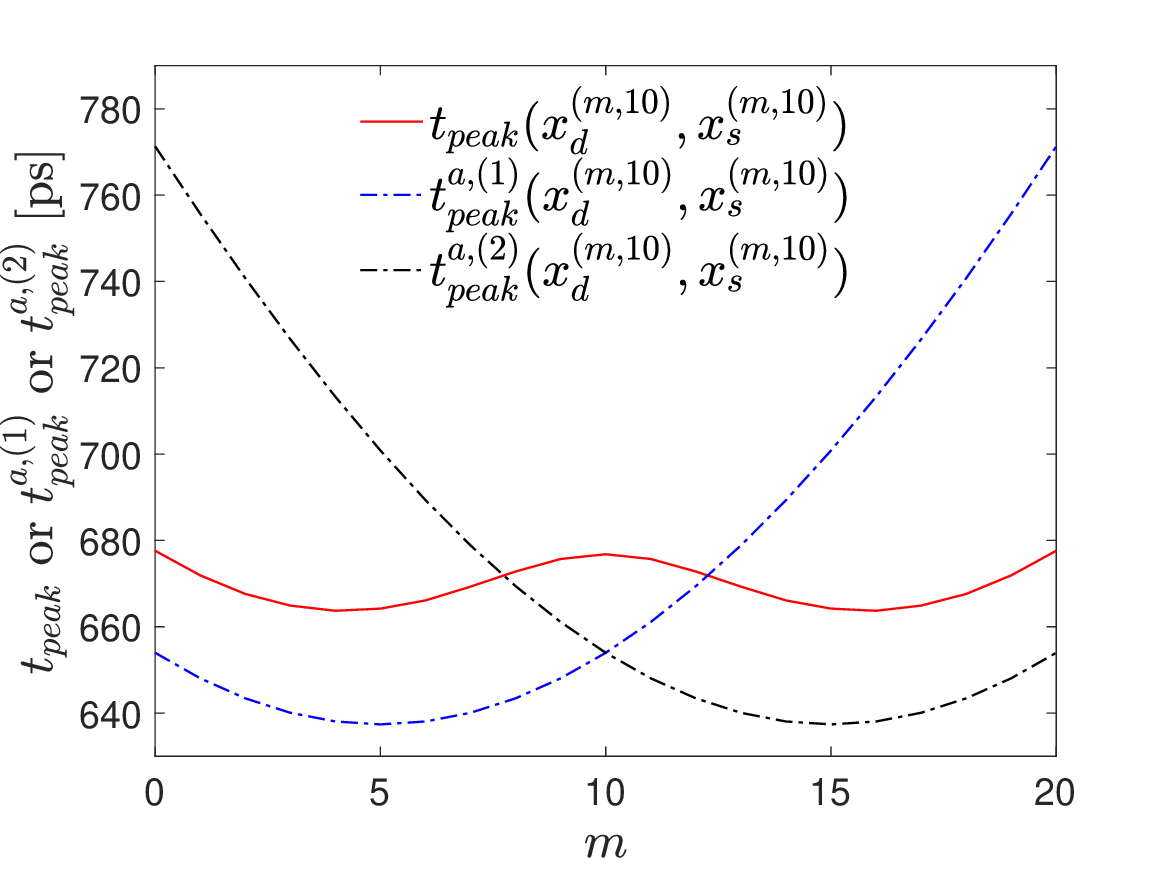}
& \includegraphics[width=0.33\textwidth]{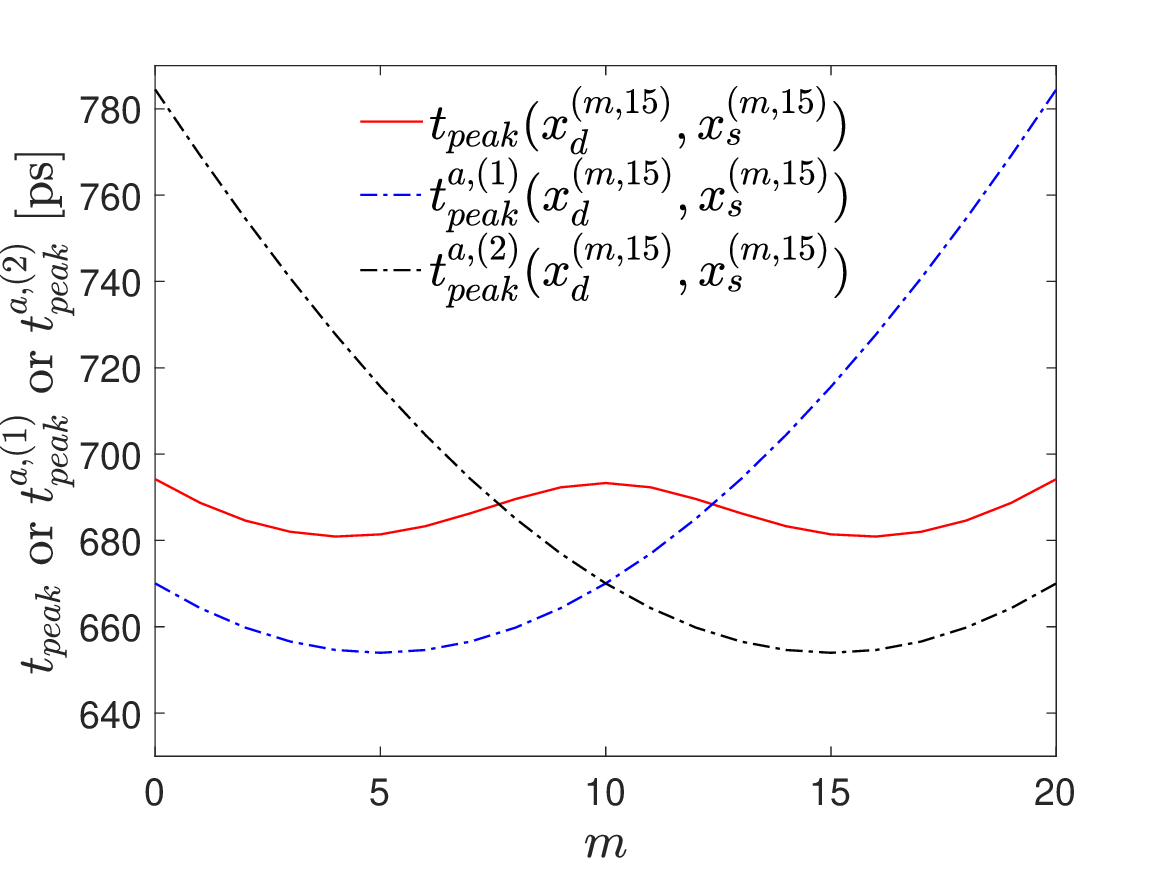}
\end{tabular}
\begin{tabular}{lll}
(d) & (e) & (f) \\
\includegraphics[width=0.33\textwidth]{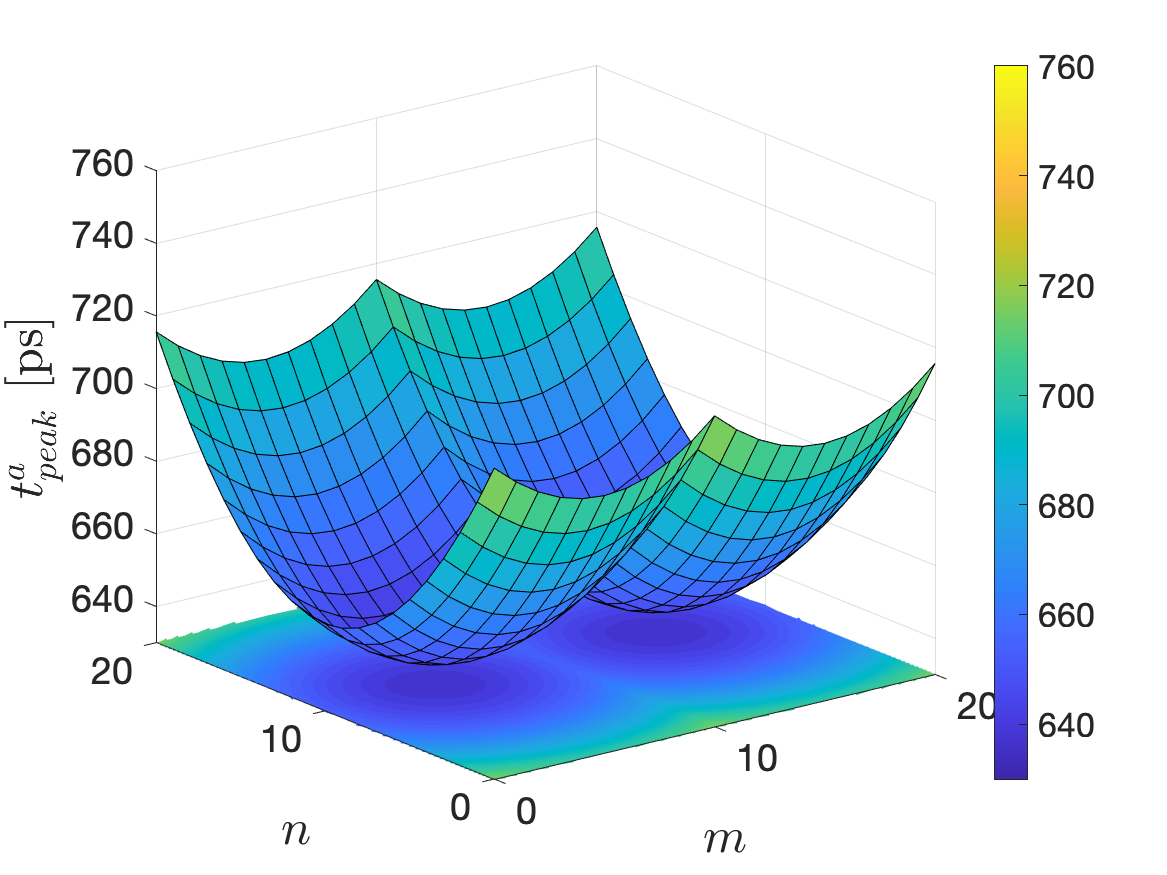}
&\includegraphics[width=0.33\textwidth]{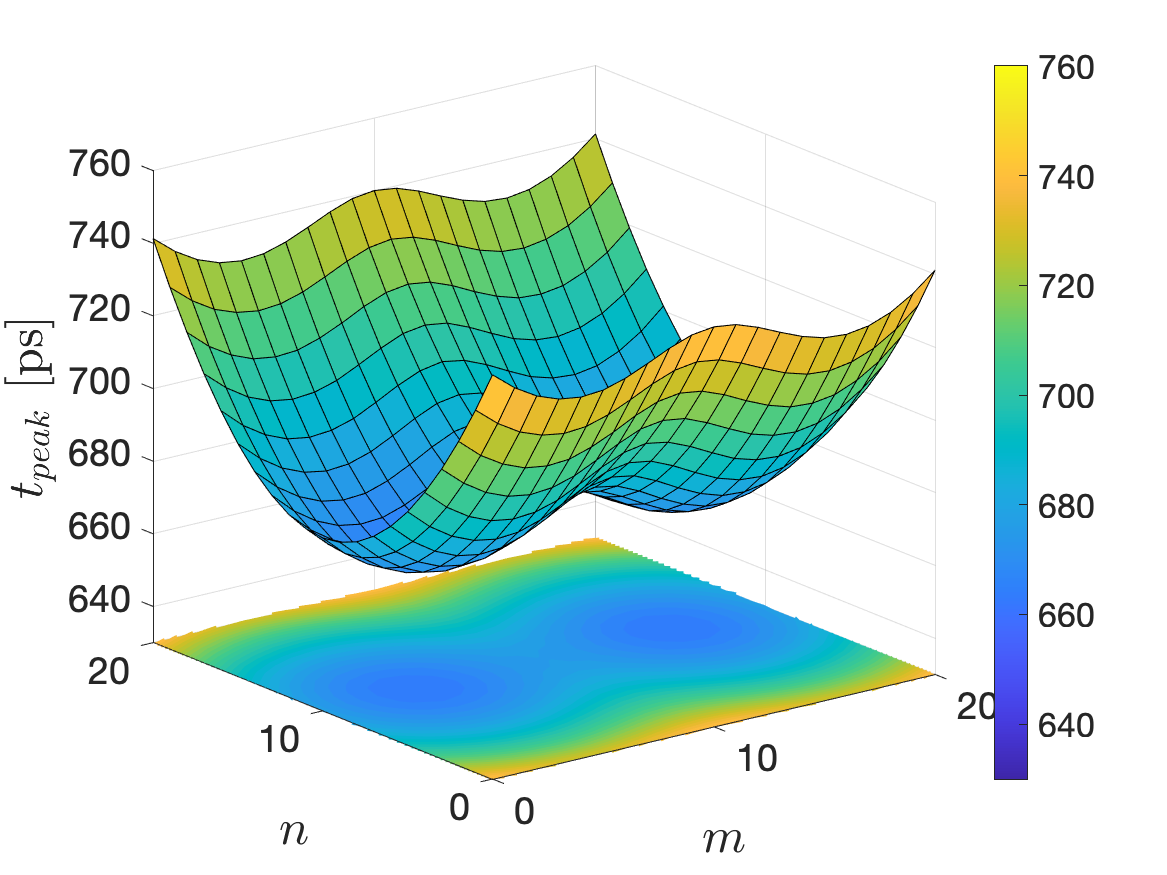}
&\includegraphics[width=0.33\textwidth]{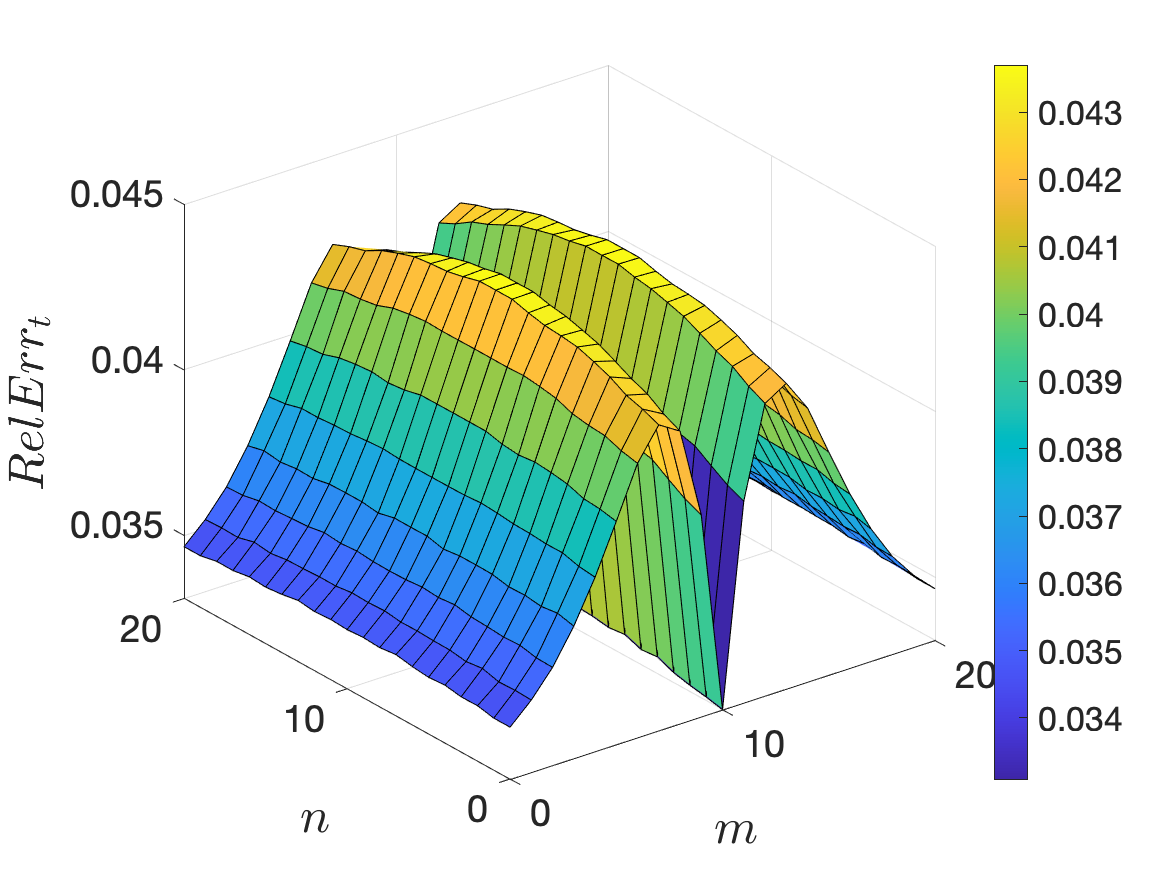}
\end{tabular}
\caption{Peak time, approximate peak time, and relative error for different S-D pairs}
\label{fig_Apptps_2020}
\end{figure}

\section{Reconstruction algorithm}
\label{sec_algo}
%In this section, we introduce a bisection reconstruction algorithm to reconstruct $x_c$ from several measured peak times. More precisely, by using the mentioned order relation and symmetry, we first reconstruct $(x_{c_1},\,x_{c_2})$ by the bisection method taking the measured peak times as an index. Then, the depth $x_{c_3}$ can be reconstructed by solving the approximate peak time equation \eqref{peak time eq}.
In this section, we study the mathematical properties of the approximate peak time and then numerically verify them.
Based on the properties of the peak time, we develop a bisection reconstruction algorithm and a boundary-scan reconstruction algorithm for single point and multiple point targets, respectively, both of which include two stages. In the first stage, we reconstruct the first two coordinates of each target by using the properties of peak time. Then, the third coordinate of each target is reconstructed by solving the approximate peak time equation.

\subsection{Properties of peak time related to target location}

In this subsection, we rigorously prove some properties of the approximate peak time related to the distance between the target location and the S-D pair, which are also numerically verified to the peak time. 
\medskip

%We first show the properties of the approximate peak time, and then numerically verify the accuracy of the approximate peak time compared with the peak time. 

%We show the uniqueness, some properties of the approximate peak time as follows. 

%Before showing the properties of $t_{peak}^a$ related to $\lambda$,
We first state the following monotonicity of $P(\lambda;\,t)$ defined by \eqref{func_peak} with respect to $\lambda$.

\begin{lemma}\label{lem_mono_resp_lamb}
For $t>0$, $P(\lambda;\,t)$ is a monotonically increasing function for $\lambda\in(0,\,tk^{\frac{1}{2}})$.
\end{lemma}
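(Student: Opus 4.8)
The plan is to differentiate $P(t;\lambda)$ with respect to $\lambda$ while holding $t$ fixed, and to show this derivative is strictly positive throughout $(0,tk^{\frac{1}{2}})$. Since the second term $\pi^{\frac{1}{2}}\ell^{-1}t^{\frac{3}{2}}$ in \eqref{func_peak} is independent of $\lambda$, the entire $\lambda$-dependence resides in the first term $g(\lambda):=\lambda e^{-(\sqrt{k}t-\lambda)^2/t}$, so it suffices to examine the sign of $\partial_\lambda g$.

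First I would differentiate the exponent $\phi(\lambda):=-(\sqrt{k}t-\lambda)^2/t$, obtaining $\partial_\lambda\phi=2(\sqrt{k}t-\lambda)/t$. Combining the product and chain rules then gives
\begin{equation*}
\partial_\lambda P = \partial_\lambda g = e^{-\frac{(\sqrt{k}t-\lambda)^2}{t}}\left[1+\frac{2\lambda(\sqrt{k}t-\lambda)}{t}\right].
\end{equation*}
The key observation is that the interval constraint $\lambda<tk^{\frac{1}{2}}=\sqrt{k}t$ is exactly the statement $\sqrt{k}t-\lambda>0$. Hence on $(0,tk^{\frac{1}{2}})$ both factors $\lambda>0$ and $\sqrt{k}t-\lambda>0$ are positive, so the bracketed quantity exceeds $1$, while the exponential prefactor is always positive. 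Therefore $\partial_\lambda P>0$ on the whole interval, which establishes the asserted monotonicity.

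Since the argument reduces to a one-line sign check after differentiation, I do not anticipate any real obstacle. The only point requiring care is to match the endpoint $tk^{\frac{1}{2}}$ with the positivity of $\sqrt{k}t-\lambda$, and to note that this range of $\lambda$ is the natural counterpart of the region $t>\lambda k^{-\frac{1}{2}}$ on which $P$ was defined in \eqref{func_peak}, so the two viewpoints (varying $t$ versus varying $\lambda$) are mutually consistent.
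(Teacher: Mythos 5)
Your proof is correct and follows essentially the same route as the paper: both compute $\partial_\lambda P = \left(1+\frac{2\lambda(\sqrt{k}t-\lambda)}{t}\right)e^{-\frac{(\sqrt{k}t-\lambda)^2}{t}}$ and conclude positivity from $0<\lambda<tk^{\frac{1}{2}}$. Your write-up is in fact slightly more explicit than the paper's, which states the same sign argument in one line.
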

\begin{proof}
Let us examine the monotonicity of $P(\lambda;\,t)$ by considering
%\begin{equation}
$$\partial_\lambda P=\left( 1+\frac{2\lambda(\sqrt{k}t-\lambda)}{t}\right)e^{-\frac{(\sqrt{k}t-\lambda)^2}{t}}.$$
%\end{equation}
Due to the assumption $\lambda<t k^{\frac{1}{2}}$ given in Theorem \ref{solbehaviortheorem2}, we have $\partial_\lambda P>0$ for $\lambda\in(0,\,tk^{\frac{1}{2}})$. 
\end{proof}

By showing the properties of $t_{peak}^a$ related to $\lambda$, we can obtain the solvability of $(x_{c_1},\,x_{c_2})$ under additional assumptions on S-D pairs as follows.
\begin{theorem}\label{thm_orderpeak}
For S-D pairs $x_d^{(n)},\,x_s^{(n)}\in \partial\Omega,\,n=1,\,2,$ we have the following equivalence 
\begin{equation}\label{equ_equiva_dis_t}
t_{peak}^{a,(1)}\geq t_{peak}^{a,(2)} \Longleftrightarrow \lambda^{(1)}\geq \lambda^{(2)},
\end{equation}
where $t_{peak}^{a,(n)}:=t_{peak}^a(x_d^{(n)},\,x_s^{(n)};\,x_c)$ and $\lambda^{(n)}:=\lambda(x_d^{(n)},\,x_s^{(n)};\,x_c)$, $n=1,\,2$.
If we assume that S-D pairs satisfy
\begin{equation}\label{equ_SDdisequal}
|x_d^{(1)}-x_s^{(1)}|=|x_d^{(2)}-x_s^{(2)}|,
\end{equation}
$t_{peak}^{a,(2)}$ attains its unique minimum when $x_d^{(2)}$ and $x_s^{(2)}$ satisfy 
\begin{equation}\label{equ_centerSD}
x_{c_1}=\frac{x_{d_1}^{(2)}+x_{s_1}^{(2)}}{2},\;\;x_{c_2}=\frac{x_{d_2}^{(2)}+x_{s_2}^{(2)}}{2}.
\end{equation}
where $x_{d_j}^{(2)}$ and $x_{s_j}^{(2)},\,j=1,\,2,$ denote the $j$-th coordinate of $x_d^{(2)}$ and $x_s^{(2)}$, respectively. 

If we further assume that S-D pairs satisfy
\begin{equation}\label{SDs_samedirec1}
x_{s_1}^{(1)}-x_{s_1}^{(2)}=x_{d_1}^{(1)}-x_{d_1}^{(2)},\;\;x_{s_2}^{(1)}=x_{s_2}^{(2)},\;\;x_{d_2}^{(1)}=x_{d_2}^{(2)},
\end{equation}
then, $t_{peak}^{a,(1)}=t_{peak}^{a,(2)}$ implies
\begin{equation}\label{equ_solo_xc1}
x_{c_1}=\frac{x_{s_1}^{(1)}+x_{d_1}^{(2)}}{2}.
\end{equation}
Similarly, if %we let $x_d^{(n)},\,x_s^{(n)}\in\partial\Omega,\,n=1,\,2,$ satisfy
\begin{equation}\label{SDs_samedirec2}
x_{s_2}^{(1)}-x_{s_2}^{(2)}=x_{d_2}^{(1)}-x_{d_2}^{(2)},\;\;x_{s_1}^{(1)}=x_{s_1}^{(2)},\;\;x_{d_1}^{(1)}=x_{d_1}^{(2)},
\end{equation}
then, $t_{peak}^{a,(1)}=t_{peak}^{a,(2)}$ implies
\begin{equation}\label{equ_solo_xc2}
x_{c_2}=\frac{x_{s_2}^{(1)}+x_{d_2}^{(2)}}{2}.
\end{equation}
\end{theorem}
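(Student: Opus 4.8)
The plan is to establish the equivalence \eqref{equ_equiva_dis_t} first and then use it as the backbone for the remaining assertions. For the equivalence I would show that the approximate peak time, regarded as a function of the single parameter $\lambda$, is strictly increasing. With $P(t;\lambda)$ as in \eqref{func_peak}, Theorem \ref{thm_uniq_apppeak} gives that the root $t_{peak}^a(\lambda)$ is unique and satisfies $\partial_t P<0$ there, while Lemma \ref{lem_mono_resp_lamb} gives $\partial_\lambda P>0$ on the admissible region $\lambda<tk^{1/2}$. Differentiating $P\big(t_{peak}^a(\lambda);\lambda\big)=0$ implicitly yields $\frac{d}{d\lambda}t_{peak}^a=-\partial_\lambda P/\partial_t P>0$, so $\lambda\mapsto t_{peak}^a(\lambda)$ is strictly increasing; since each $\lambda^{(n)}$ is tied to $t_{peak}^{a,(n)}$ through this map, \eqref{equ_equiva_dis_t} follows. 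The point requiring care is the domain: the monotonicity in $\lambda$ holds only for $\lambda<tk^{1/2}$, equivalently $t>\lambda k^{-1/2}$, which is precisely the admissibility constraint $t_{peak}^a>\lambda k^{-1/2}$ guaranteed by Theorem \ref{thm_uniq_apppeak}. Because the implicit-function argument tracks the root locally as $\lambda$ varies, this inequality stays valid along the whole continuation path, which is cleaner than a one-shot comparison of $P$ at a fixed $t$ (that could fall outside the domain when $\lambda^{(1)}$ greatly exceeds $\lambda^{(2)}$).

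Given the equivalence, the minimization claim reduces to a purely geometric statement about $\lambda^{(2)}$: by \eqref{defk}, minimizing $t_{peak}^{a,(2)}$ is the same as minimizing $|x_d^{(2)}-x_c|^2+|x_s^{(2)}-x_c|^2$. Here I would invoke the median identity
\begin{equation*}
|x_d-x_c|^2 + |x_s-x_c|^2 = 2\left| \tfrac{x_d+x_s}{2} - x_c \right|^2 + \tfrac{1}{2}|x_d - x_s|^2,
\end{equation*}
whose last term is fixed under the constraint \eqref{equ_SDdisequal}. Hence the sum is minimized exactly when the midpoint $(x_d^{(2)}+x_s^{(2)})/2$ is as close as possible to $x_c$. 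Since both $x_d^{(2)},x_s^{(2)}\in\partial\Omega$, this midpoint has zero third coordinate while $x_{c_3}>0$ is fixed, and its first two coordinates are free; strict convexity of the squared distance then forces a unique minimizer where those coordinates equal $x_{c_1},x_{c_2}$, which is exactly \eqref{equ_centerSD}.

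For the last two assertions I would translate $t_{peak}^{a,(1)}=t_{peak}^{a,(2)}$ into $\lambda^{(1)}=\lambda^{(2)}$ via the equivalence, i.e. $|x_d^{(1)}-x_c|^2+|x_s^{(1)}-x_c|^2=|x_d^{(2)}-x_c|^2+|x_s^{(2)}-x_c|^2$, and then expand using the shift \eqref{SDs_samedirec1}. Writing $\Delta:=x_{d_1}^{(1)}-x_{d_1}^{(2)}=x_{s_1}^{(1)}-x_{s_1}^{(2)}$ and noting that all other coordinates agree, the difference of the two sides telescopes to $2\Delta\big(x_{d_1}^{(2)}+x_{s_1}^{(2)}-2x_{c_1}\big)+2\Delta^2$. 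Setting this to zero and dividing by $2\Delta\neq0$ gives $x_{d_1}^{(2)}+x_{s_1}^{(2)}+\Delta=2x_{c_1}$, and using $x_{s_1}^{(1)}=x_{s_1}^{(2)}+\Delta$ yields $x_{c_1}=(x_{s_1}^{(1)}+x_{d_1}^{(2)})/2$, which is \eqref{equ_solo_xc1}. The claim \eqref{equ_solo_xc2} under \eqref{SDs_samedirec2} is identical with the roles of the first and second coordinates interchanged.

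The main obstacle I anticipate is the domain bookkeeping in the first part: combining the two monotonicity results is legitimate only on $\{\lambda<tk^{1/2}\}$, and the cleanest way to remain inside this region throughout is the implicit-function/continuation argument rather than a direct comparison. Everything afterwards is elementary algebra and convexity driven by the equivalence, so no further difficulty is expected.
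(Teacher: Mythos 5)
Your proposal is correct, and all three conclusions are established, but your treatment of the key equivalence \eqref{equ_equiva_dis_t} takes a genuinely different route from the paper. The paper argues pointwise: it first proves the strict version $t_{peak}^{a,(1)}>t_{peak}^{a,(2)}\Leftrightarrow\lambda^{(1)}>\lambda^{(2)}$ by evaluating $P$ at the two roots and exploiting the sign characterization from Theorem \ref{thm_uniq_apppeak} ($P>0$ before the root, $P<0$ after) together with Lemma \ref{lem_mono_resp_lamb}; it then handles the equality case separately, deriving from the two vanishing conditions the explicit relation
\begin{equation*}
t_{peak}^{a,(1)}\bigl(\ln\lambda^{(1)}-\ln\lambda^{(2)}\bigr)=\bigl(\sqrt{k}\,t_{peak}^{a,(1)}-\lambda^{(1)}\bigr)^2-\bigl(\sqrt{k}\,t_{peak}^{a,(1)}-\lambda^{(2)}\bigr)^2
\end{equation*}
and showing the two sides have opposite monotonicity in $\lambda^{(2)}$, forcing $\lambda^{(1)}=\lambda^{(2)}$. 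You instead differentiate $P\bigl(t_{peak}^a(\lambda);\lambda\bigr)=0$ implicitly and conclude $\frac{d}{d\lambda}t_{peak}^a=-\partial_\lambda P/\partial_t P>0$; this is valid because $\partial_t P<0$ holds on all of $t>\lambda k^{-\frac{1}{2}}$ and the unique root stays strictly inside that region for every admissible $\lambda$, so the root map is globally well defined and $C^1$. Your route is more compact and yields the strict and equality cases in one stroke, at the cost of invoking the implicit function theorem; the paper's route is more elementary but needs the two-case split. (Your worry that the paper-style one-shot comparison could exit the domain is actually unfounded: in the forward direction $\lambda^{(2)}<t_{peak}^{a,(2)}k^{\frac{1}{2}}<t_{peak}^{a,(1)}k^{\frac{1}{2}}$, and in the converse $\lambda^{(2)}<\lambda^{(1)}<t_{peak}^{a,(1)}k^{\frac{1}{2}}$, so both comparisons are in range.) For the minimization claim, you add genuine content: the paper merely asserts that $\lambda^{(2)}$ is smallest under \eqref{equ_centerSD}, whereas your parallelogram identity $|x_d-x_c|^2+|x_s-x_c|^2=2\bigl|\tfrac{x_d+x_s}{2}-x_c\bigr|^2+\tfrac{1}{2}|x_d-x_s|^2$ makes the uniqueness of the minimizing midpoint transparent under the constraint \eqref{equ_SDdisequal}. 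The final algebraic step is identical to the paper's (your $\Delta$ is the paper's $L$), and both arguments implicitly require the two S-D pairs to be distinct, i.e.\ $\Delta\neq 0$, which you at least flag when dividing by $2\Delta$.
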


\begin{proof}
%For simplicity, we write 
%\begin{equation}\label{equ_sim_tp}
%\lambda^{(n)}:=\lambda(x_d^{(n)},\,x_s^{(n)};\,x_c),\;\;t_{peak}^{a,(n)}:=t_{peak}^a(x_d^{(n)},\,x_s^{(n)};\,x_c),\;\;n=1,\,2.
%\end{equation}
By Theorem \ref{thm_uniq_apppeak} and for $\lambda^{(n)}$, there exists a unique $t_{peak}^{a,(n)}$ satisfying $\lambda^{(n)}<t_{peak}^{a,(n)}k^{\frac{1}{2}}$ such that 
\begin{equation}\label{equ_Pt}
P(t_{peak}^{a,(n)};\,\lambda^{(n)})=0,\;\;n=1,\,2.
\end{equation}
We divide the proof of the equivalence \eqref{equ_equiva_dis_t} into two steps. We first prove the equivalence $t_{peak}^{a,(1)}> t_{peak}^{a,(2)}\Leftrightarrow \lambda^{(1)}> \lambda^{(2)}$.  Suppose that $t_{peak}^{a,(1)}>t_{peak}^{a,(2)}$. Hence, we have 
\begin{equation}\label{equ_rela_lamb2_t12}
\lambda^{(2)}<t_{peak}^{a,(2)}k^{\frac{1}{2}}<t_{peak}^{a,(1)}k^{\frac{1}{2}}.
\end{equation}
By the proof of Theorem \ref{thm_uniq_apppeak},
we have
\begin{equation}\label{charactrization_approx_peak}
t_{peak}^{a,(2)}=\overline{t_{peak}^{a,(2)}}
=\underline{t_{peak}^{a,(2)}},
\end{equation}
where
\begin{equation}\label{peakform_larg0}
\begin{array}{l}
\overline{t_{peak}^{a,(2)}}:=\sup\{t>0:\,P(t';\,\lambda^{(2)})>0\,\,(\lambda^{(2)} k^{-\frac{1}{2}}<t'<t)\},\\
\underline{t_{peak}^{a,(2)}}:=\inf\{t>0:\,P(t';\,\lambda^{(2)})<0\,\,(t'>t)\}.
\end{array}
\end{equation}
Then, $t_{peak}^{a,(1)}>t_{peak}^{a,(2)}$ implies
\begin{equation}\label{equ_Pt2Pt1_lamb2}
0=P(t_{peak}^{a,(2)};\,\lambda^{(2)})>P(t_{peak}^{a,(1)};\,\lambda^{(2)}).
\end{equation}
Combining this with $P(t_{peak}^{a,(1)};\,\lambda^{(1)})=0$, we have
$$P(t_{peak}^{a,(1)};\,\lambda^{(1)})>P(t_{peak}^{a,(1)};\,\lambda^{(2)}).$$
By Lemma \ref{lem_mono_resp_lamb} together with \eqref{equ_rela_lamb2_t12}, we obtain $\lambda^{(1)}>\lambda^{(2)}$.% i.e., \eqref{order_dist}.

Conversely, we assume that $\lambda^{(1)}>\lambda^{(2)}$, which also gives the condition
\begin{equation*}
\lambda^{(2)}<\lambda^{(1)}<t_{peak}^{a,(1)}k^{\frac{1}{2}}.
\end{equation*}
Combining this with Lemma \ref{lem_mono_resp_lamb}, we have
\begin{equation*}
    0=P(t_{peak}^{a,(1)},\,\lambda^{(1)})>P(t_{peak}^{a,(1)},\,\lambda^{(2)}).
\end{equation*}
Since $P(t_{peak}^{a,(2)},\,\lambda^{(2)})=0$, we arrive at \eqref{equ_Pt2Pt1_lamb2}. Due to 
$$\lambda^{(2)}<\lambda^{(1)}<t_{peak}^{a,(1)}k^{\frac{1}{2}}\;\;{\rm and}\;\;\lambda^{(2)}<t_{peak}^{a,(2)}k^{\frac{1}{2}},$$
the relation \eqref{equ_Pt2Pt1_lamb2} implies $t_{peak}^{a,(1)}>t_{peak}^{a,(2)}$ from the proof of Theorem \ref{thm_uniq_apppeak}.

Next, we prove the equivalence $t_{peak}^{a,(1)}=t_{peak}^{a,(2)}\Leftrightarrow \lambda^{(1)}=\lambda^{(2)}$. Assuming that $\lambda^{(1)}=\lambda^{(2)}$, it is obvious that $t_{peak}^{a,(1)}=t_{peak}^{a,(2)}$ by using the uniqueness of the approximate peak time. On the other hand, we assume that $t_{peak}^{a,(1)}=t_{peak}^{a,(2)}$. From \eqref{equ_Pt},  we have 
\begin{equation}\label{equ_tp1=tp2}
t_{peak}^{a,(1)}(\ln\lambda^{(1)}-\ln\lambda^{(2)})=(\sqrt{k}t_{peak}^{a,(1)}-\lambda^{(1)})^2-(\sqrt{k}t_{peak}^{a,(1)}-\lambda^{(2)})^2.
\end{equation}
For fixed $\lambda^{(1)}$, the left hand side of \eqref{equ_tp1=tp2} increases as $\lambda^{(2)}\in (0,\,t_{peak}^{a,(1)}k^{\frac{1}{2}})$ increases, monotonically. Similarly, the right-hand side of \eqref{equ_tp1=tp2} decreases monotonically. Hence, there must be $\lambda^{(1)}=\lambda^{(2)}$ in \eqref{equ_tp1=tp2}.

The value $\lambda^{(2)}$ has a unique smallest value whenever $x_d^{(2)}$ and $x_s^{(2)}$ satisfy the condition \eqref{equ_centerSD}. That is, \eqref{equ_centerSD} gives the condition of taking the minimal approximate peak time. The uniqueness has been proven in Theorem \ref{thm_uniq_apppeak}.

With an additional assumption \eqref{SDs_samedirec1}, let 
$L:=x_{s_1}^{(1)}-x_{s_1}^{(2)}=x_{d_1}^{(1)}-x_{d_1}^{(2)}.$
We have proved the fact that $t_{peak}^{a,(1)}=t_{peak}^{a,(2)}$ implies $\lambda^{(1)}=\lambda^{(2)}$, which gives 
\begin{equation}\label{equ_same_dis}
(x_{d_1}^{(1)}-x_{c_1})^2+(x_{s_1}^{(1)}-x_{c_1})^2=(x_{d_1}^{(2)}-x_{c_1})^2+(x_{s_1}^{(2)}-x_{c_1})^2.
\end{equation}
Substituting $x_{d_1}^{(1)}=L+x_{d_1}^{(2)}$ and $x_{s_1}^{(1)}=L+x_{s_1}^{(2)}$ into above equation, there is a unique solution 
$$x_{c_1}=\frac{x_{d_1}^{(2)}+x_{s_1}^{(2)}+L}{2}=\frac{x_{s_1}^{(1)}+x_{d_1}^{(2)}}{2}.$$
Under the assumption \eqref{SDs_samedirec2}, $x_{c_2}$ can be uniquely determined from \eqref{equ_same_dis}.
\end{proof}

In the following, we numerically verify that the peak time possesses the same properties given in Theorem \ref{thm_orderpeak}.
%In this verification, we show the results for different S-D pairs. 

Let $x_c=(10,\,10,\,20)$ and the physical parameters be \eqref{phys_para}.
We compute the peak time, the approximate peak time, and the relative error for a set of S-D pairs defined as
\begin{align}\label{SD8_index}
\left\{\{x_d^{(m,n)},\,x_s^{(m,n)}\}=\left\{(4+m,\,n,\,0),\,(-4+m,\,n,\,0)\right\},\;\;m,\,n=0,\,1,\,\cdots,\,20\right\},
\end{align}
which are plotted in Figure \ref{fig_tp_life_depth20}.
For simplicity, we write 
\begin{align*}
&t_{peak}^{(m,n)}:= t_{peak}(x_d^{(m,n)},\,x_s^{(m,n)};\,x_c),\;\;\;\;t_{peak}^{a,(m,n)}:= t_{peak}^a(x_d^{(m,n)},\,x_s^{(m,n)};\,x_c),\nonumber\\
&\lambda^{(m,n)}:=\lambda(x_d^{(m,n)},\,x_s^{(m,n)};\,x_c),\;\;\;\;
RelErr_t^{(m,n)}:=RelErr_t(x_d^{(m,n)},\,x_s^{(m,n)};\,x_c).
\end{align*}
 Here, the relative error $RelErr_t^{(m,n)}$ has a similar definition given in \eqref{relerr_tp}. We mention that each grid point $(m,\,n)$ in Figure \ref{fig_tp_life_depth20} (a), (b) and (c) corresponds to $t_{peak}^{(m,n)},\,t_{peak}^{a,(m,n)}$ and $RelErr_t^{(m,n)}$, respectively.

\begin{figure}[htp]
\centering
\begin{tabular}{lll}
(a) & (b) & (c)\\
\includegraphics[width=0.31\textwidth]{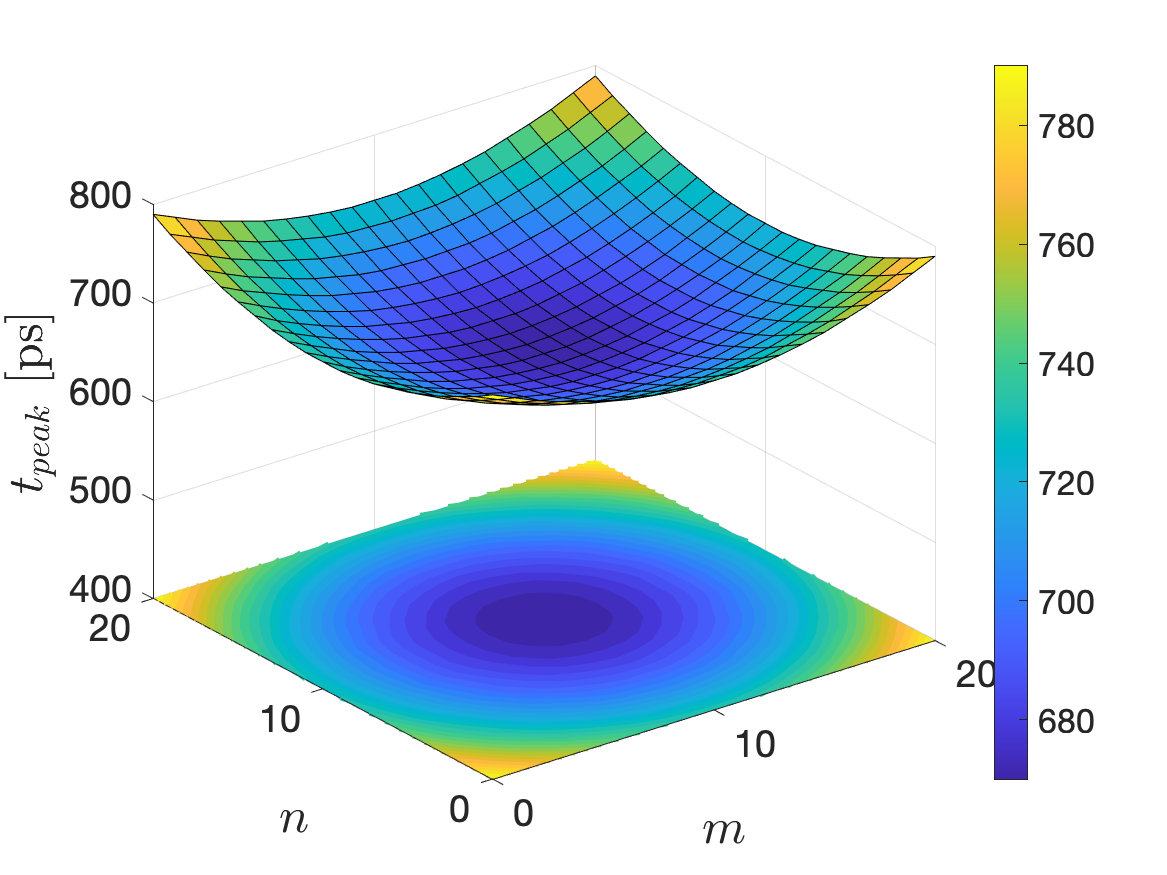}
& \includegraphics[width=0.31\textwidth]{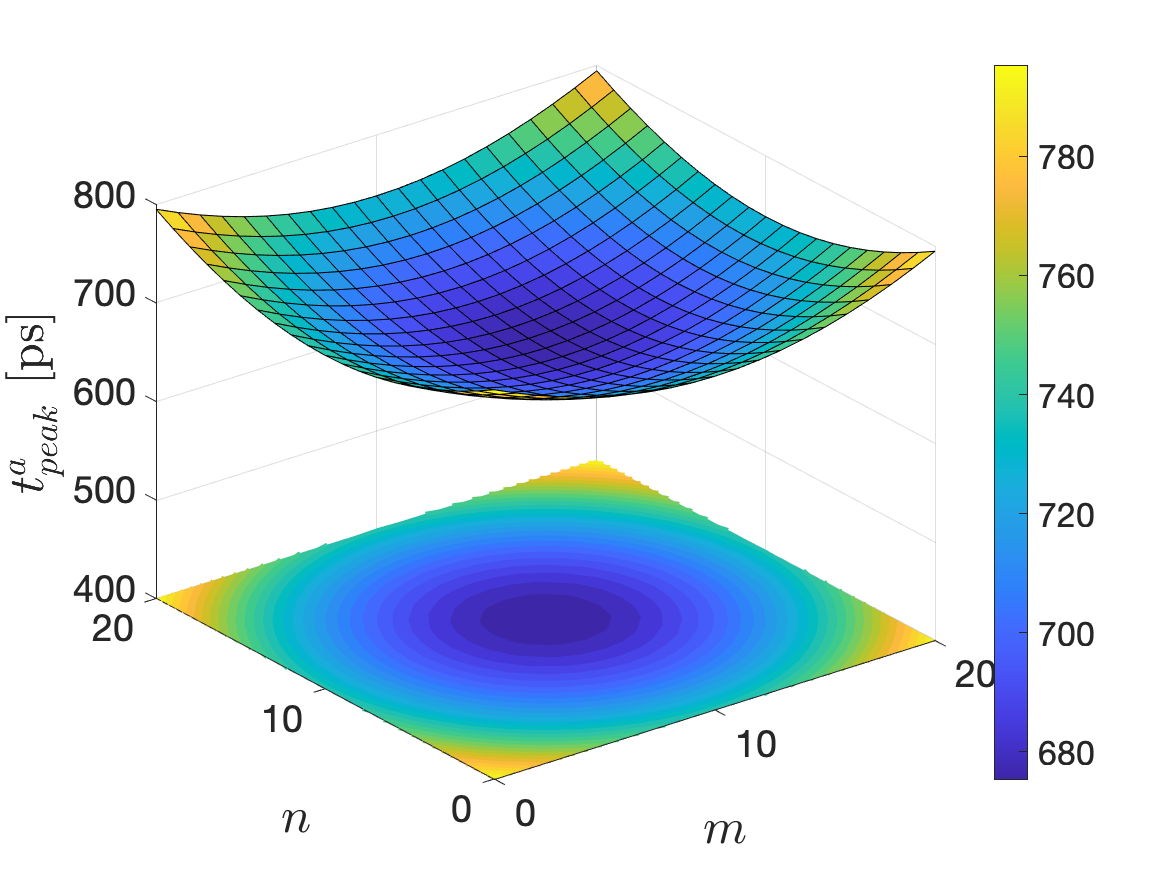}
&\includegraphics[width=0.31\textwidth]{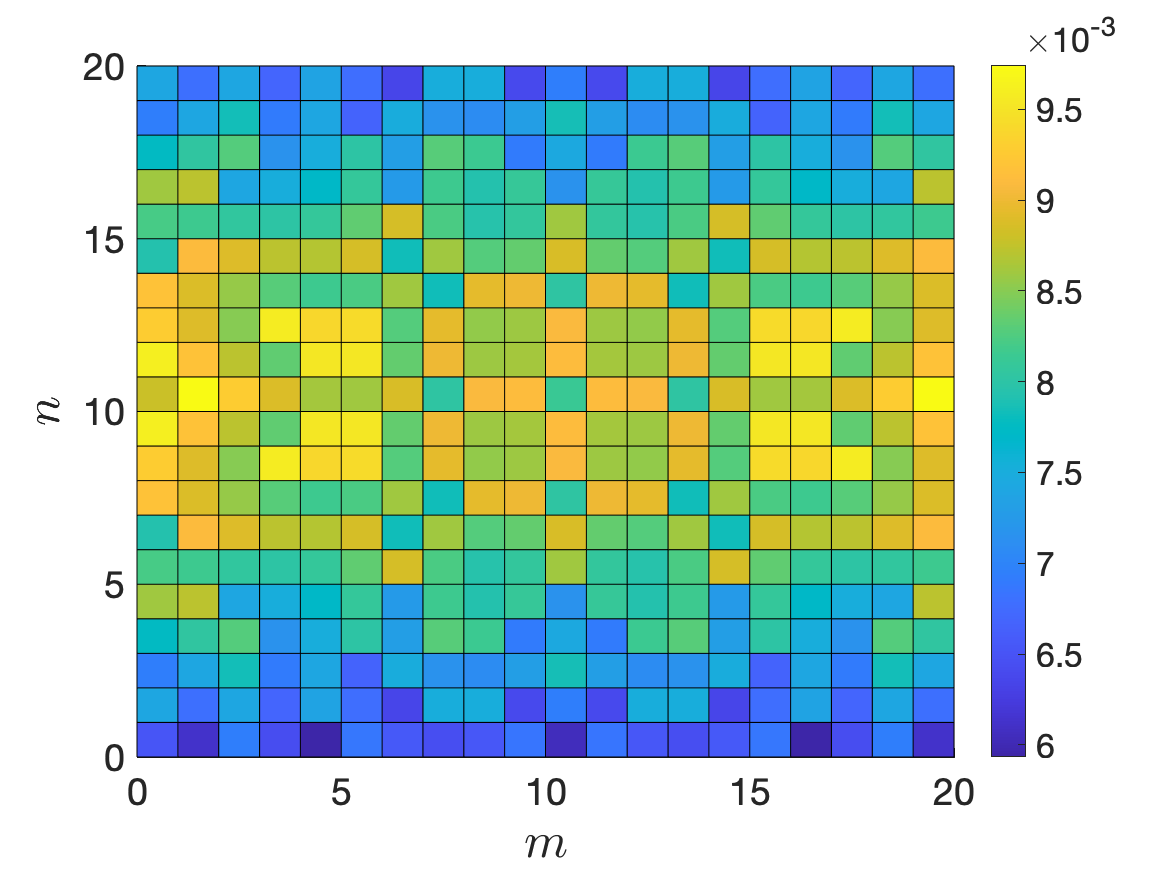}
\end{tabular}
\caption{(a) peak time, (b) approximate peak time, and (c) relative error for S-D pairs defined by \eqref{SD8_index}}
\label{fig_tp_life_depth20}
\end{figure}

We have the following observations of the peak time from the results shown in Figure \ref{fig_tp_life_depth20}:
\begin{itemize}
    \item[{\rm (i)}] Uniqueness: there always exists only one peak time for each S-D pair defined by \eqref{SD8_index}.
    \item[{\rm (ii)}] Order relation: $t_{peak}^{(m,n)}$ decreases as $(m,n)\rightarrow (10,10)$, since $\lambda^{(m,n)}$ decreases as $(m,n)\rightarrow (10,10)$. We obtain the smallest peak time at $\{x_d^{(10,10)},x_s^{(10,10)}\}$, which satisfies $x_{d_1}^{(10,10)}+x_{s_1}^{(10,10)}=2x_{c_1},\,x_{d_2}^{(10,10)}+x_{s_2}^{(10,10)}=2x_{c_2}$, i.e., \eqref{equ_centerSD}.
    \item[{\rm (iii)}] Symmetry: for any $(m_1,n_1)$ and $(m_2,n_2)$ satisfying $\lambda^{(m_1,n_1)}=\lambda^{(m_2,n_2)}$, we observe $t_{peak}^{(m_1,n_1)}=t_{peak}^{(m_2,n_2)}$ from the circular contour shape of the peak times as shown in the bottom plot of Figure \ref{fig_tp_life_depth20} (a). It is easy to verify the solvability of $(x_{c_1},\,x_{c_2})$ given in \eqref{equ_solo_xc1} and \eqref{equ_solo_xc2}, since the center of the circular contour shape is $(x_{c_1},\,x_{c_2})$.
     \item[{\rm (iv)}] Accuracy: the relative error shown in Figure \ref{fig_tp_life_depth20} (c) indicates a good approximation of the approximate peak time.
\end{itemize}

Despite the limited numerical results, it is reasonable to use the approximate peak time equation and the properties of the approximate peak time to formulate a scheme reconstructing the location of the unknown point target. Hence, unless specified, we do not distinguish the peak time and the approximate peak time in the following parts. Here, we remark that we will set S-D pairs satisfying the assumption \eqref{SDs_samedirec1} or \eqref{SDs_samedirec2} in the scheme since \eqref{SDs_samedirec1} and \eqref{SDs_samedirec2} directly imply \eqref{equ_SDdisequal}.

\subsection{Bisection reconstruction algorithm for reconstructing single point target}
For the single point target $x_c$, the properties of the peak time in Theorem \ref{thm_orderpeak} only depend on $(x_{c_1},\,x_{c_2})$. 
%Hence, we introduce a bisection reconstruction algorithm to reconstruct $x_c$ with the following two stages: 
Hence, we divide the reconstruction scheme into the following two stages: 
\begin{description}
  \item[Stage 1:]
  Let $(x_{c_1},\,x_{c_2})$ be inside of a rectangular region of interest $({\rm ROI})$. %, where ${\rm ROI}:=(x_l,\,x_r)\times(x_b,\,x_t)\subset\partial\Omega$. 
  By using the peak time as an index, we use the bisection method to gradually shrink ${\rm ROI}$ and stop by some criterion. We let the center of the updated ${\rm ROI}$ be the reconstruction $(x_{c_1}^{inv},\,x_{c_2}^{inv})$.
%  (see Subsection \ref{subsec_algoxc12} for details).
  \item[Stage 2:]  
Substituting a set of parameters $x_d,\,x_s\in\partial\Omega$, $t=t_{peak}(x_d,\,x_s;\,x_c)$ and $(x_{c_1},\,x_{c_2})=(x_{c_1}^{inv},\,x_{c_2}^{inv})$ into \eqref{func_peak}, the depth $x_{c_3}$ can be reconstructed by finding the positive root of \eqref{peak time eq}.
\end{description}

%\begin{remark}
% We remark that Then, the unique existence of $x_{c_3}^{inv}>0$ can be directly obtained from the unique existence of $\lambda^{inv}$ given in Theorem \ref{thm_unique_xc}.
%\end{remark}

%\subsection{Bisection method for reconstructing $(x_{c_1},\,x_{c_2})$}
%\label{subsec_algoxc12}
Now, we give a detailed explanation of \textbf{Stage 1}. 
Let ${\rm ROI}:=(x_l,\,x_r)\times(x_b,\,x_t)\subset\partial\Omega$.
We define four S-D pairs with $L>0$ as
\begin{equation}\label{4D_SDset}
\begin{array}{l}
\{x_d^{(1)},\,x_s^{(1)}\}:=\left\{(x_l+\frac{L}{2},\,x_b,\,0),\,(x_l-\frac{L}{2},\,x_b,\,0)\right\},
\{x_d^{(2)},\,x_s^{(2)}\}:=\left\{(x_l+\frac{L}{2},\,x_b,\,0),\,(x_r-\frac{L}{2},\,x_b,\,0)\right\},\\
\{x_d^{(3)},\,x_s^{(3)}\}:=\left\{(x_r+\frac{L}{2},\,x_t,\,0),\,(x_r-\frac{L}{2},\,x_t,\,0)\right\},
\{x_d^{(4)},\,x_s^{(4)}\}:=\left\{(x_l+\frac{L}{2},\,x_t,\,0),\,(x_l-\frac{L}{2},\,x_t,\,0)\right\}.
\end{array}
\end{equation}
By comparing the order relation of four corresponding peak times $t_{peak}^{(n)}:=t_{peak}(x_d^{(n)},\,x_d^{(n)};\,x_c),\,n=1,\,2,\,3,\,4$, we halve the current {\rm ROI} and obtain an updated {\rm ROI}, still denoted by ${\rm ROI}=(x_l,\,x_r)\times(x_b,\,x_t)$. Then, we repeat to set four S-D pairs as \eqref{4D_SDset} and compare the order relation of the peak times.  The algorithm breaks out if a criterion is satisfied. 
%$Err_1:=x_r-x_l\leq \epsilon_1,\,Err_2:=x_t-x_b\leq \epsilon_2$ with some $\epsilon_1,\,\epsilon_2>0$. We define the reconstruction $x_{c_1}^{inv}=(x_l+x_r)/2,\,x_{c_2}^{inv}=(x_b+x_t)/2$.
We refer to this scheme as Algorithm \ref{algo_3D}. Here, we will use the symbol \enquote{$\leftarrow$} to represent the update.  

\begin{algorithm}[htp]
\caption{Reconstruct $(x_{c_1},\,x_{c_2})$ by using the bisection method}
\label{algo_3D}
\textbf{Input}: ${\rm ROI}=(x_l,\,x_r)\times(x_b,\,x_t)$, tolerances $\epsilon_1,\,\epsilon_2>0$ and distance $L$.
\begin{description}
\item[Step 1] Define four S-D pairs by \eqref{4D_SDset}.

\item[Step 2] Compute $t_{peak}^{(n)},\,n=1,\,2,\,3,\,4$, and update ${\rm ROI}$.
\begin{description}

\item[1a)]\,If $t_{peak}^{(1)}=t_{peak}^{(2)}=t_{peak}^{(3)}
    =t_{peak}^{(4)}$, break the loop and output $x_{c_1}^{inv}=\frac{x_l+x_r}{2},\,x_{c_2}^{inv}=\frac{x_b+x_t}{2}$.

\item[2a)]\,If $t_{peak}^{(1)}=t_{peak}^{(2)}<t_{peak}^{(3)}
    =t_{peak}^{(4)}$, output $x_{c_1}^{inv}=\frac{x_l+x_r}{2}$,  update $x_t\leftarrow\frac{x_b+x_t}{2}$, ${\rm ROI}\leftarrow(x_b,\,x_t)$, break the loop and go to Algorithm \ref{algo_2D}.

\item[2b)]\,If $t_{peak}^{(1)}=t_{peak}^{(2)}>t_{peak}^{(3)}
    =t_{peak}^{(4)}$, output $x_{c_1}^{inv}=\frac{x_l+x_r}{2}$, update $x_b\leftarrow\frac{x_b+x_t}{2}$, ${\rm ROI}\leftarrow(x_b,\,x_t)$, break the loop and go to Algorithm \ref{algo_2D}.

\item[2c)]\,If $t_{peak}^{(1)}=t_{peak}^{(4)}<t_{peak}^{(2)}
    =t_{peak}^{(3)}$, output $x_{c_2}^{inv}=\frac{x_b+x_t}{2}$, update $x_r\leftarrow\frac{x_l+x_r}{2}$, ${\rm ROI}\leftarrow(x_l,\,x_r)$, break the loop and go to Algorithm \ref{algo_2D}.

\item[2d)]\,If $t_{peak}^{(1)}=t_{peak}^{(4)}>t_{peak}^{(2)}
    =t_{peak}^{(3)}$, output $x_{c_2}^{inv}=\frac{x_b+x_t}{2}$,  update $x_l\leftarrow\frac{x_l+x_r}{2}$, ${\rm ROI}\leftarrow(x_l,\,x_r)$, break the loop and go to Algorithm \ref{algo_2D}.

\item[3a)]\,If $t_{peak}^{(1)}<\{t_{peak}^{(2)},\,t_{peak}^{(3)},\,t_{peak}^{(4)}\}$,
    update $x_r\leftarrow\frac{x_l+x_r}{2},\, x_t\leftarrow\frac{x_b+x_t}{2},\, {\rm ROI}\leftarrow(x_l,\,x_r)\times(x_b,\,x_t)$.

\item[3b)]\,If $t_{peak}^{(2)}<\{t_{peak}^{(1)},\,t_{peak}^{(2)},\,t_{peak}^{(4)}\}$,
    update $x_l\leftarrow\frac{x_l+x_r}{2},\, x_t\leftarrow\frac{x_b+x_t}{2},\, {\rm ROI}\leftarrow(x_l,\,x_r)\times(x_b,\,x_t)$.

\item[3c)]\,If $t_{peak}^{(3)}<\{t_{peak}^{(1)},\,t_{peak}^{(2)},\,t_{peak}^{(4)}\}$,
    update $x_l\leftarrow\frac{x_l+x_r}{2},\, x_b\leftarrow\frac{x_b+x_t}{2},\, {\rm ROI}\leftarrow(x_l,\,x_r)\times(x_b,\,x_t)$.

\item[3d)]\,If $t_{peak}^{(4)}<\{t_{peak}^{(1)},\,t_{peak}^{(2)},\,t_{peak}^{(3)}\}$,
    update $x_r\leftarrow\frac{x_l+x_r}{2},\, x_b\leftarrow\frac{x_b+x_t}{2},\, {\rm ROI}\leftarrow(x_l,\,x_r)\times(x_b,\,x_t)$.
\end{description}
\item[Step 3] Compute $Err_1:=x_r-x_l$, $Err_2:=x_t-x_b$ and check stop criterion.

If $Err_1\leq \epsilon_1,\,Err_2\leq \epsilon_2$, stop and output $x_{c_1}^{inv}=\frac{x_l+x_r}{2},\,x_{c_2}^{inv}=\frac{x_b+x_t}{2}$.

If $Err_1> \epsilon_1,\,Err_2\leq \epsilon_2$, output $x_{c_2}^{inv}=\frac{x_b+x_t}{2}$, go to Algorithm \ref{algo_2D} with ${\rm ROI}:=(x_l,\,x_r)$.

If $Err_1\leq \epsilon_1,\,Err_2> \epsilon_2$, output $x_{c_1}^{inv}=\frac{x_l+x_r}{2}$, go to Algorithm \ref{algo_2D} with ${\rm ROI}:=(x_b,\,x_t)$.

If $Err_1>\epsilon_1,\,Err_2> \epsilon_2$, go to \bf{Step 1}.
\end{description}
\end{algorithm}

\begin{algorithm}[htp]
\caption{Reconstruct $x_{c_1}$ or $x_{c_2}$ after the dimensionality reduction}
\label{algo_2D}
\textbf{Input:} Reconstruction $x_{c_2}^{inv}$, {\rm ROI}=$(x_l,\,x_r)$, tolerance $\epsilon_1$ and distance $L$.

\begin{description}
\item[Step 1] Define S-D pairs $\{x_d^{(n)},\,x_s^{(n)}\},\,n=1,\,2,$ by \eqref{4D_SDset}.

\item[Step 2] Compare $t_{peak}^{(1)}$ and $t_{peak}^{(2)}$ and update ${\rm ROI}$.

If $t_{peak}^{(1)}=t_{peak}^{(2)}$, break the loop and output $x_{c_1}^{inv}=\frac{x_l+x_r}{2}$.

If $t_{peak}^{(1)}<t_{peak}^{(2)}$, update
$x_r\leftarrow\frac{x_l+x_r}{2},\, {\rm ROI}\leftarrow(x_l,\,x_r)$.

If $t_{peak}^{(1)}>t_{peak}^{(2)}$, update
$x_l\leftarrow\frac{x_l+x_r}{2},\, {\rm ROI}\leftarrow(x_l,\,x_r)$.

\item[Step 3] Compute $Err_1=x_r-x_l$ and check stop criterion.

If $Err_1\leq \epsilon_1$, break the loop and output $x_{c_1}^{inv}=\frac{x_l+x_r}{2}$.
 Otherwise, go to \textbf{Step 1}.

\end{description}
(Note: The case of inputting $x_{c_1}^{inv}, \; {\rm ROI}=(x_b,\,x_t),\;\epsilon_2$ and $L$ can be done in the same way.)
\end{algorithm}

It is obvious that there exists a unique reconstruction $(x_{c_1}^{inv},\,x_{c_2}^{inv})$ in \rm{\textbf{Stage 1}}. 
We state the following unique existence of reconstruction $x_{c_3}^{inv}$ from given $t_{peak}$.
\begin{theorem}\label{thm_unique_xc}
Let $x_c^*$ be the true location. Assume that $\ell>\pi^{\frac{1}{2}}k^{-\frac{3}{4}}\left(\lambda^*\right)^{\frac{1}{2}}$ with $\lambda^*:=\lambda(x_d,\,x_s,\,x_c^*)$.
For given peak time $t_{peak}$, there exists a unique positive reconstruction $x_{c_3}^{inv}$ such that $\lambda^{inv}<t_{peak}k^{\frac{1}{2}}$ and $P(\lambda^{inv};\,t_{peak})=0$, where $\lambda^{inv}:=\lambda(x_d,\,x_s,\,x_c^{inv})$ and  $P(\lambda;\,t)$ are defined by \eqref{defk} and \eqref{func_peak}, respectively.
\end{theorem}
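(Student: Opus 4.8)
The plan is to fix the measured value $t=t_{peak}$ and regard $P(\,\cdot\,;t_{peak})$ as a function of the single variable $\lambda$ on the admissible interval $(0,t_{peak}k^{\frac12})$, then obtain existence from the true location, uniqueness from the monotonicity in Lemma \ref{lem_mono_resp_lamb}, and finally transfer the conclusion from $\lambda^{inv}$ to the depth $x_{c_3}^{inv}$ through the strictly monotone dependence of $\lambda$ on $x_{c_3}$.

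For existence, I would first observe that the standing hypothesis $\ell>\pi^{\frac12}k^{-\frac34}(\lambda^*)^{\frac12}$ is exactly the hypothesis of Theorem \ref{thm_uniq_apppeak} with $\lambda=\lambda^*$. Applying that theorem to the true location produces the approximate peak time, which in the present (idealized) setting is identified with the measured $t_{peak}$, satisfying $P(t_{peak};\lambda^*)=0$ together with the domain relation $t_{peak}>\lambda^*k^{-\frac12}$, that is, $\lambda^*<t_{peak}k^{\frac12}$. Since the expression \eqref{func_peak} defines one and the same function of the pair $(t,\lambda)$ — the notations $P(t;\lambda)$ and $P(\lambda;t)$ only record which slot is treated as the variable — the identity $P(t_{peak};\lambda^*)=0$ says precisely that $\lambda^*$ is a root of $P(\,\cdot\,;t_{peak})$ lying in $(0,t_{peak}k^{\frac12})$. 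This supplies existence without having to test the sign of $P$ at the right endpoint.

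Uniqueness is then immediate: by Lemma \ref{lem_mono_resp_lamb}, $P(\,\cdot\,;t_{peak})$ is strictly increasing on $(0,t_{peak}k^{\frac12})$, so it vanishes at most once there, whence $\lambda^{inv}=\lambda^*$ is the unique root. To pass to the depth, I would use \eqref{defk} with $x_d,x_s\in\partial\Omega$ (so their third coordinates vanish) to write
\begin{equation*}
\lambda^2=\frac{1}{2vD}\Big[(x_{d_1}-x_{c_1})^2+(x_{d_2}-x_{c_2})^2+(x_{s_1}-x_{c_1})^2+(x_{s_2}-x_{c_2})^2+2x_{c_3}^2\Big],
\end{equation*}
which, for the fixed reconstructed $(x_{c_1}^{inv},x_{c_2}^{inv})$, is strictly increasing in $x_{c_3}>0$. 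Thus $x_{c_3}\mapsto\lambda$ is a strictly increasing bijection of $(0,\infty)$ onto $(\lambda_0,\infty)$, where $\lambda_0$ denotes its value at $x_{c_3}=0$. Because $\lambda^{inv}=\lambda^*$ is attained by the true positive depth, $\lambda^{inv}>\lambda_0$, and inverting this bijection yields a unique positive $x_{c_3}^{inv}$ with $\lambda(x_d,x_s,x_c^{inv})=\lambda^{inv}$, which completes the argument.

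The step I expect to be the main obstacle is existence. One cannot obtain it by a naive endpoint sign check: at $\lambda=t_{peak}k^{\frac12}$ the exponent vanishes and $P=t_{peak}\big(k^{\frac12}-\pi^{\frac12}\ell^{-1}t_{peak}^{\frac12}\big)$, whose positivity would require $\ell>\pi^{\frac12}k^{-\frac12}t_{peak}^{\frac12}$, a bound strictly stronger than the hypothesis once $t_{peak}>\lambda^*k^{-\frac12}$ is taken into account. The resolution is precisely the detour through the true location above, which hands us a root for free and is legitimate under the paper's convention of not distinguishing the peak time from the approximate peak time; the remaining monotonicity arguments are then routine.
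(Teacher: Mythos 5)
Your proof is correct, and for the existence half it takes a genuinely different route from the paper's --- one that in fact repairs the paper's own argument. The paper proves uniqueness exactly as you do, via the monotonicity of $P(\,\cdot\,;t_{peak})$ from Lemma \ref{lem_mono_resp_lamb}, but for existence it performs precisely the endpoint sign check you warn against: it notes $P(0;\,t_{peak})<0$ and then asserts
\[
P(t_{peak}k^{\frac{1}{2}};\,t_{peak})=t_{peak}k^{\frac{1}{2}}-\pi^{\frac{1}{2}}\ell^{-1}t_{peak}^{\frac{3}{2}}>t_{peak}k^{\frac{1}{2}}+k^{\frac{3}{4}}(\lambda^*)^{-\frac{1}{2}}t_{peak}^{\frac{3}{2}}>0,
\]
invoking the hypothesis $\ell>\pi^{\frac{1}{2}}k^{-\frac{3}{4}}(\lambda^*)^{\frac{1}{2}}$. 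As written, the first inequality cannot hold (the hypothesis yields a lower bound with $-k^{\frac{3}{4}}(\lambda^*)^{-\frac{1}{2}}t_{peak}^{\frac{3}{2}}$, not $+$), and, as your last paragraph correctly diagnoses, even the sign-corrected bound is negative whenever $t_{peak}>\lambda^*k^{-\frac{1}{2}}$, so the stated hypothesis on $\ell$ alone cannot deliver positivity at the right endpoint. Your detour through the true location closes this gap: under the paper's declared convention of identifying $t_{peak}$ with the approximate peak time, $P(t_{peak};\,\lambda^*)=0$ with $\lambda^*<t_{peak}k^{\frac{1}{2}}$, so $\lambda^*$ is itself a root of $P(\,\cdot\,;t_{peak})$ in the admissible interval; incidentally, monotonicity then gives $P(t_{peak}k^{\frac{1}{2}};\,t_{peak})>P(\lambda^*;\,t_{peak})=0$, showing the paper's endpoint claim is true but only provable by an argument like yours. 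You are also more careful on the final step: the paper passes from $\lambda^{inv}$ to $x_{c_3}^{inv}$ with a one-line appeal to \eqref{defk}, whereas you check that the map $x_{c_3}\mapsto\lambda$ is a strictly increasing bijection onto $(\lambda_0,\infty)$ and that $\lambda^{inv}>\lambda_0$ (which, as in the paper, silently assumes Stage 1 recovered the true horizontal coordinates). In short: same uniqueness argument, but your existence argument is different from the paper's and is the one that actually works under the stated hypothesis.
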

\begin{proof}
Due to $x_{c_3}>0$, proving the uniqueness of $x_{c_3}^{inv}$ is equivalent to proving the uniqueness of $\lambda^{inv}$.  From Lemma \ref{lem_mono_resp_lamb}, we obtain that $P(\lambda;\,t_{peak})$ is monotonically increasing for $\lambda\in(0,\,t_{peak}k^{\frac{1}{2}})$. Due to $P(0;\,t_{peak})< 0$, we only need to compute the value $P(\lambda;\,t_{peak})$ at $\lambda=t_{peak}k^{\frac{1}{2}}$,
$$P(t_{peak}k^{\frac{1}{2}};\,t_{peak})=t_{peak}k^{\frac{1}{2}}-\pi^{\frac{1}{2}}\ell^{-1}t_{peak}^{\frac{3}{2}}>t_{peak}k^{\frac{1}{2}}+k^{\frac{3}{4}}(\lambda^*)^{-\frac{1}{2}}t_{peak}^{\frac{3}{2}}>0,$$
which implies the unique existence of $\lambda^{inv}$. From \eqref{defk}, we obtain that there exists a unique $x_{c_3}^{inv}$.
\end{proof}

\subsection{Boundary-scan algorithm for reconstructing multiple point targets}
In this subsection, we propose a boundary-scan method for reconstructing the locations of the multiple point targets $x_c^{(1)},\,x_c^{(2)},\,\cdots,\,x_c^{(J)}$, which are assumed to be well-separated. 
\medskip

%Comparing \eqref{peak time eq} with , 
For each $x_c^{(l)},\,l=1,\,2,\,\cdots,\,J$, the relation between $x_c^{(l)}$ and the approximate peak time $t_{peak}^{a,(l)}$ defined by \eqref{tp_formulas} must be the same as described in Theorem \ref{thm_orderpeak}, if we further assume that the S-D pair $\{x_d,\,x_s\}$ also satisfies \eqref{equ_dis_rela_mult}. In this case, $t_{peak}^{a,(l)}$ becomes smaller and smaller as $\{x_d,\,x_s\}$ gets closer and closer to $x_c^{(l)}$, where the distance between $x_d$ and $x_s$ is fixed. Furthermore, it reaches its minimum when 
\begin{equation}\label{xcs_center}
x_{c_1}^{(l)}=\frac{x_{d_1}+x_{s_1}}{2},\;\;x_{c_2}^{(l)}=\frac{x_{d_2}+x_{s_2}}{2}.
\end{equation}
As S-D pair $\{x_d,\,x_s\}$ moves, if there is another target $x_c^{(k)},\,1\leq k\neq l\leq J,$ such that 
$$|x_d-x_c^{(k)}|^2+|x_s-x_c^{(k)}|^2<|x_d-x_c^{(j)}|^2+|x_s-x_c^{(j)}|^2,\;\;1\leq j\neq k\leq J,$$
we can define its approximate peak time $t_{peak}^{a,(k)}$ by \eqref{tp_formulas}, which will have the same properties of $t_{peak}^{a,(l)}$ as described above when we move $\{x_d,\,x_s\}$. 

For the peak time $t_{peak}$, recall the numerical verification given in Subsection \ref{subsec_app_tps}.
From Figure \ref{fig_Apptps_2020} (d) and (e), we can observe that both $t_{peak}^a$ and $t_{peak}$ become smaller and smaller as $(m,\,n)$ approaches $(5,\,10)$ or $(15,\,10)$.
They reach local minima at $(m,\,n)=(5,\,10)$ and $(m,\,n)=(15,\,10)$, because the S-D pairs $\{x_d^{(5,10)},\,x_s^{(5,10)}\}$ and $\{x_d^{(15,10)},\,x_s^{(15,10)}\}$ are the nearest S-D pairs of \eqref{equ_gridpointSD} to targets $x_c^{(1)}$ and $x_c^{(2)}$, respectively. Furthermore, $\{x_d^{(5,10)},\,x_s^{(5,10)}\}$ and $\{x_d^{(15,10)},\,x_s^{(15,10)}\}$ satisfy  \eqref{xcs_center} for $x_c^{(1)}$ and $x_c^{(2)}$, respectively. From the relative error between $t_{peak}^a$ and $t_{peak}$ shown in Figure \ref{fig_Apptps_2020} (f), we reasonably speculate that $t_{peak}$ approximately satisfies the approximate peak time equation \eqref{tp_formulas} to any $x_c^{(j)},\,j=1,\,2$, where the chosen S-D pair satisfies \eqref{equ_dis_rela_mult}.

From the above discussion, the key point of reconstructing the first two coordinates of the targets is to search for the locally minimal peak time. Before proposing the reconstruction algorithm, we define the well-separated multiple point targets as follows.
\begin{definition}\label{def_wellsepa}
Let $x_c^{(j)}\in\Omega,\,j=1,\,2,\,\cdots,\,J,$ be the locations of unknown point targets, where $(x_{c_1}^{(j)},\,x_{c_2}^{(j)})\in {\rm ROI}:=(x_l,\,x_r)\times(x_b,\,x_t)$. 
%Introduce a uniform partition in ${\rm ROI}$ using a mesh
%\begin{equation*}
%x^{(m,n)}:=\left(m\cdot\frac{x_r-x_l}{M}+x_l,\, n\cdot\frac{x_t-x_b}{N}+x_b,\,0\right)
%\end{equation*}
%for $m=0,\,1,\,\cdots,\,M,\,n=0,\,1,\,\cdots,\,N.$
We define a set $\Xi$ of S-D pairs as
\begin{align}\label{SD_index}
\Xi:=\Big\{\{x_d^{(m,n)},\,x_s^{(m,n)}\}\,:\,&\{(x_l+\frac{L}{2}+m\frac{x_r-x_l}{M},\,x_b+n\frac{x_t-x_b}{N},\,0),\,
(x_l-\frac{L}{2}+m\frac{x_r-x_l}{M},\,x_b+n\frac{x_t-x_b}{N},\,0)\}\nonumber\\
&m=0,\,1,\,\cdots,\,M,\,n=0,\,1,\,\cdots,\,N,\,L>0\Big\}.
\end{align}
Let us further assume that there is a unique peak time for each $\{x_d^{(m,n)},\,x_s^{(m,n)}\}\in \Xi$. Then, there exists a set $\mathcal{P}$ of the peak times corresponding to $\Xi$.
We say that these $J$ unknown point targets are well-separated if there are $J$ local minimums in $\mathcal{P}$ for $J$ multiple point targets
\begin{equation}\label{tps}
t_{peak}^{min,(j)}:=t_{peak}^{min}(x_d^{min,(j)},\,x_s^{min,(j)};\,x_c^{(1)},\,\cdots,\,x_c^{(J)}),\;\;j=1,\,2,\,\cdots,\,J,
\end{equation}
where the S-D pair $\{x_d^{min,(j)},\,x_s^{min,(j)}\}\in \Xi$ is the S-D pair of the smallest distance from the $j$-th target in $\Xi$.
\end{definition}

We propose a boundary-scan reconstruction algorithm to reconstruct well-separated multiple point targets $x_c^{(j)},\,1\leq j\leq J,$ as in Algorithm \ref{algo_multpoints}. 

\begin{algorithm}[H]
\caption{Boundary-scan reconstruction algorithm for reconstructing multiple point targets}
\label{algo_multpoints}
\textbf{Input:} {\rm ROI}=$(x_l,\,x_r)\times(x_b,\,x_t)$,  distance $L$ of S-D pair.

\begin{description}
\item[Step 1] Scan ROI by using S-D pairs \eqref{SD_index}
%\begin{align}\label{SD_index}
%\Xi:=\Big\{\{x_d^{(m,n)},\,x_s^{(m,n)}\}\,:\,&(x_l+\frac{L}{2}+m\frac{x_r-x_l}{M},\,x_b+n\frac{x_t-x_b}{N},\,0),\,
%(x_l-\frac{L}{2}+m\frac{x_r-x_l}{M},\,x_b+n\frac{x_t-x_b}{N},\,0)\nonumber\\
%&m=0,\,1,\,\cdots,\,M,\,n=0,\,1,\,\cdots,\,N\Big\}
%\end{align}
and record the peak times $t_{peak}(x_d^{(m,n)},\,x_s^{(m,n)}),\,m=0,\,1,\,\cdots,\,M,\,n=0,\,1,\,\cdots,\,N.$

\item[Step 2] Reconstruct $(x_{c_1}^{(j)},\,x_{c_2}^{(j)}),\,j=1,\,2,\,\cdots,\,J,$ by 
\begin{equation}\label{c1c2_muti}
x_{c_1}^{(j),inv}:=\frac{x_{s_1}^{min,(j)}+x_{d_1}^{min,(j)}}{2},\qquad x_{c_2}^{(j),inv}:=\frac{x_{s_2}^{min,(j)}+x_{d_2}^{min,(j)}}{2},
\end{equation}
where $\{x_{d}^{min,(j)},\,x_{s}^{min,(j)}\}$ is a S-D pair of $\Xi$ at which peak time takes the local minimum $t_{peak}^{min,(j)}$. 

\item[Step 3] Reconstruct $x_{c_3}^{(j)},\,j=1,\,2,\,\cdots,\,J,$ by solving \eqref{tp_formulas} for given $\{x_d,\,x_s\}=\{x_{d}^{min,(j)},\,x_{s}^{min,(j)}\}$, $t=t_{peak}^{min,(j)}$ and $(x_{c_1}^{(j)},\,x_{c_2}^{(j)})=(x_{c_1}^{(j),inv},\,x_{c_2}^{(j),inv})$.

\end{description}
\end{algorithm}

\section{Numerical experiments}
\label{sec_exp}
In this section, we apply the proposed bisection reconstruction algorithm and boundary-scan reconstruction algorithm to the numerical experiments of single point and multiple point targets, respectively.
In all experiments, we set the physical parameters as \eqref{phys_para}.  
%$\mu_a=0.1\,{\rm mm^{-1}},\,v=0.219\,{\rm mm/ps},\,D=1/3\,{\rm mm},\,\beta=0.5493\,{\rm mm^{-1}}$ and $\ell=1000\,{\rm ps}$.
%$v,\,D,\,\beta,\,\ell$ as \eqref{phys_para} and $\mu_a=0.1\,{\rm mm^{-1}}$.
\medskip

%We apply the bisection reconstruction algorithm to reconstruct $x_c$. 
Usually, noise, specifically time jitters, is unavoidable in the measurements.
We perturb $t_{peak}$ by
\begin{equation}\label{noisydata_add}
t_{peak}^\delta:=\left(1+\hat\delta\times(2\times {\rm rand(1)} -1)\right)\times t_{peak},
\end{equation}
where $\hat\delta$ is the relative noise level, and $\rm rand(1)$ generates a uniformly distributed random number on the interval $(0,\,1)$.
We compute the relative error of the reconstruction by the following formula:
\begin{equation}\label{equ_relerr_xc}
RelErr=\sum_{j=1}^J\frac{|x_c^{(j)}-x_c^{(j),inv}|}{|x_c^{(j)}|},
\end{equation}
where $x_c^{(j)}$ and $x_c^{(j),inv}$ are the actual and reconstructed location of the $j$-th target, respectively.

%\subsection{Single point target}
Let us first consider the single point target case and suppress the superscript $(j)$ of $x_c^{(j)}$. 
We denote by ${\rm ROI}^{inv}$ the final ${\rm ROI}$ when the stop criteria of Algorithm \ref{algo_3D} and Algorithm \ref{algo_2D} are satisfied.

\begin{example}\label{exm01}
Let $x_c=(7,\,17,\,20)$. By setting {\rm ROI}=$(0,\,20)\times(0,\,20)$, $L=8$ and $\epsilon_1=\epsilon_2=0.1$ in Algorithm \ref{algo_3D}, we show the results of numerical reconstructions for different noise levels in Table \ref{tbl:ex01}. 
\end{example}

\begin{table}[htp]
\centering
\caption{Reconstructions for different noise  levels and fixed tolerances $\epsilon_1=\epsilon_2=0.1$}
\label{tbl:ex01}
\begin{tabular}{ccccc}
\hline
$x_c$& $\hat\delta$ & $x_c^{inv}$  &  $RelErr$ & ${\rm ROI}^{inv}$   \\ \hline
  \multirow{4}*{(7,\,17,\,20)} &0& (7.03,\,17.03,\,19.83) & 6.46e-03 & $(6.88,\,7.19)\times(16.88,\,17.19)$ \\
                           & 0.1\%&(6.76,\,17.30,\,19.79)  & 1.62e-02 &$(6.72,\,6.80)\times(17.27,\,17.34)$\\
                           &1\%&(7.54,\,16.99,\,19.68)   & 2.31e-02 & $(7.50,\,7.58)\times(16.95,\,17.03)$\\
                          &5\%&(7.85,\,17.30,\,19.08)   & 4.74e-02 & $(7.81,\,7.89)\times(17.27,\,17.34)$ \\\hline
\end{tabular}
\end{table}

For noise-free peak time ($\hat{\delta}=0$), the reconstruction is very close to the actual location, and its first two coordinates are also included in ${\rm ROI}^{inv}$, which shows that the proposed algorithm is accurate. Considering the case of $\hat{\delta}>0$, although the first two coordinates of the reconstructions are not included in ${\rm ROI}^{inv}$, they are still close to the true coordinates.
The relative errors of these reconstructions from the noisy peak time are still marginal, less than a few percent. 

We mention that the maximum number of shrinks is 8 times for $\epsilon_1=\epsilon_2=0.1$ in \textbf{Stage} 1. Next, we reduce the number of shrinks by setting larger tolerances $\epsilon_1=\epsilon_2=1.25$ in Example \ref{exm01}. We can still have reasonable numerical results, which are shown in Table \ref{tbl:ex012}.

\begin{table}[htp]
\centering
\caption{Reconstructions for different noise  levels and fixed tolerances $\epsilon_1=\epsilon_2=1.25$}
\label{tbl:ex012}
\begin{tabular}{ccccc}
\hline
$x_c$& $\hat\delta$ & $x_c^{inv}$  &  $RelErr$ & ${\rm ROI}^{inv}$   \\ \hline
  \multirow{4}*{(7,\,17,\,20)} &0& (6.88,\,16.88,\,19.81) & 9.51e-03 & $(6.25,\,7.50)\times(16.25,\,17.50)$ \\
                           & 0.1\%&(6.88,\,16.88,\,19.85)  & 8.06e-03 &$(6.25,\,7.50)\times(16.25,\,17.50)$\\
                           &1\%&(8.13,\,16.88,\,20.07)   & 4.38e-02 & $(7.50,\,8.75)\times(16.25,\,17.50)$\\
                          &5\%&(8.13,\,16.88,\,20.56)   & 4.65e-02 & $(7.50,\,8.75)\times(16.25,\,17.50)$ \\\hline
\end{tabular}
\end{table}

For $\epsilon_1=\epsilon_2=1.25$, the number of shrinks reduces to 4 times in \textbf{Stage} 1. Compared with the results shown in Table \ref{tbl:ex01}, we obtain that a finite number of shrinks can also give a good result. Now, ${\rm ROI}^{inv}$ contains the true  location of target for $\hat{\delta}=0.1\%$. In this case, the relative error is smaller than the one shown in Table \ref{tbl:ex01}. 
However, the true location of the target is not included in ${\rm ROI}^{inv}$ for $\hat{\delta}=1\%$ and $\hat{\delta}=5\%$. It is easy to reason that there is a false shrink occurred at the third shrink. Even in this case, we can get the reconstructions similar to Table \ref{tbl:ex01}. In short,  larger tolerances for larger noise levels may result in a much better reconstruction of the first two coordinates $(x_{c_1}^{inv},\,x_{c_2}^{inv})$. 
Moreover, the reconstruction of the depth $x_{c_3}^{inv}$ is less affected by the reconstruction $(x_{c_1}^{inv},\,x_{c_2}^{inv})$. 
\medskip

The next experiment is to verify the proposed algorithm for a deeply embedded target.
\begin{example}
    Let $x_c=(6,\,11,\,30)$. %By setting {\rm ROI}=$(0,\,20)\times(0,\,20)$, $L=8$ and $\epsilon_1=\epsilon_2=0.1$ in Algorithm \ref{algo_3D}, 
    We show the results of numerical reconstructions for different noise levels in Table \ref{tbl:ex03} with the same inputs as Example \ref{exm01}.
\end{example}

\begin{table}[htp]
\centering
\caption{Reconstructions for different noise  levels and fixed tolerances $\epsilon_1=\epsilon_2=0.1$}
\label{tbl:ex03}
\begin{tabular}{ccccc}
\hline
$x_c$& $\hat\delta$ & $x_c^{inv}$  &  $RelErr$ & ${\rm ROI}^{inv}$   \\ \hline
 \multirow{4}*{(6,\,11,\,30)} &0& (6.09,\,10.78,\,30.17) & 9.06e-03 & $(5.94,\,6.25)\times(10.63,\,10.94)$ \\
                           & 0.1\%&(5.66,\,11.37,\,30.17)  & 1.62e-02 &$(5.63,\,5.70)\times(11.33,\,11.41)$\\
                           &1\%&(7.54,\,11.99,\,30.02)   & 5.63e-02 & $(7.50,\,7.58)\times(11.95,\,12.03)$\\
                          &5\%&(7.85,\,17.30,\,29.79)   & 2.02e-01 & $(7.81,\,7.89)\times(17.27,\,17.34)$ \\\hline
\end{tabular}
\end{table}

For $\hat{\delta}=0$, we obtain a similar result as Example \ref{exm01}, that the reconstruction is very close to the true location. %, and its first two coordinates are also included in ${\rm ROI}^{inv}$. 
For $\hat{\delta}=0.1\%$ and $\hat{\delta}=1\%$, the reconstructions approximate the actual location within small relative errors. 
The reconstruction becomes worse for $\hat{\delta}=5\%$, since the peak time is more delayed as the target depth increases, causing an increase in the order of noise. Comparing the results shown in Example \ref{exm01}, even for a deeper unknown target, the reconstruction $x_{c_3}^{inv}$ is less affected by the noise. 

%\subsection{Multiple point targets}

Next, we use the following numerical example to verify the performance of Algorithm \ref{algo_multpoints} for reconstructing the locations of two unknown point targets. 

\begin{example}\label{exa_mult01}
    Let $x_c^{(1)}=(3.3,\,5.2,\,16)$, $x_c^{(2)}=(17.4,\,16.7,\,18)$ and ${\rm ROI}:=(0,\,20)\times(0,\,20)$. We choose the S-D pairs as \eqref{SD_index} for $M=N=20,\,L=2$. The numerical results are shown in Table \ref{tbl:ex_mult} for different noise levels.
\end{example}

\begin{table}[htp]
\centering
\caption{Reconstructions for different noise levels in Example \ref{exa_mult01}}
\label{tbl:ex_mult}
\begin{tabular}{ccccc}
\hline
$x_c^{(1)},x_c^{(2)}$& $\hat\delta$ & $x_c^{(1),inv}$ & $x_c^{(2),inv}$ &  $RelErr$  \\ \hline
  \multirow{3}*{(3.3,\,5.2,\,16),\,(17.4,\,16.7,\,18)} &0&(3.00,\,5.00,\,15.67) & (17.00,\,17.00,\,17.72) &  4.75e-02\\
                           & 0.1\%& (3.00,\,5.00,\,15.71)  & (17.00,\,17.00,\,17.79) &4.51e-02\\
                           &1\%&  (3.00,\,6.00,\,15.70)    & (18.00,\,17.00,\,17.84)& 7.58e-02\\\hline

\end{tabular}
\end{table}

From Figure \ref{fig_mutl}, there exist two local minima $t_{peak}^{min,(1)}=546.1$ and $t_{peak}^{min,(2)}=603.5$ measured by S-D pairs $\{x_d^{(3,5)},\,x_s^{(3,5)}\}$ and $\{x_d^{(17,17)},\,x_s^{(17,17)}\}$, respectively, where the scanning S-D pairs are defined by \eqref{SD_index}. Hence, we can reconstruct these two point targets by using Algorithm \ref{algo_multpoints}.
For the measured peak times containing noise, we first apply the moving average method with a $3\times 3$ grid to smooth the noisy peak times since the algorithm will fail due to the presence of several local minimal peak times. For instance, looking at the cross-section of the peak time, the noisy peak time, and the smoothed noisy peak time shown in Figure \ref{fig_mutl_noise}, several local minimal peak times in $t_{peak}^{\delta}(x_d^{(m,5)},\,x_s^{(m,5)})$ and $t_{peak}^{\delta}(x_d^{(m,17)},\,x_s^{(m,17)})$ for $m=0,\,1,\,\cdots,\,20$ are clearly visible. After smoothing the noisy peak times, we can distinguish two local minima in the noisy peak times $t_{peak}^\delta(x_d,\,x_s),\,\{x_d,\,x_s\}\in \Xi$. Then, we can reconstruct the locations of targets with a similar discussion for the noise-free peak times. 
The numerical results verify that the proposed algorithm is feasible to reconstruct the locations of multiple point targets.  

\begin{figure}[htp]
\centering
\begin{tabular}{ll}
(a)&(b)  \\
\includegraphics[width=0.5\textwidth]{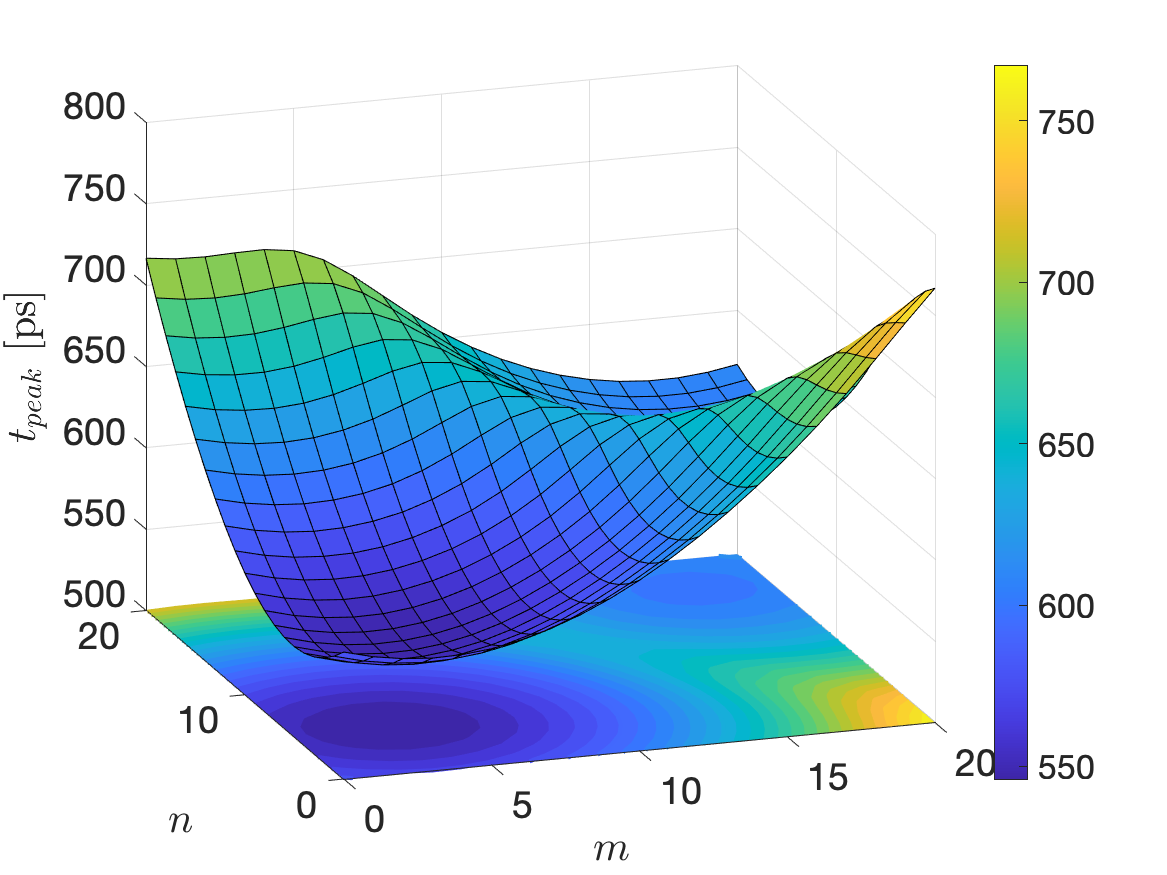}
&\includegraphics[width=0.5\textwidth]{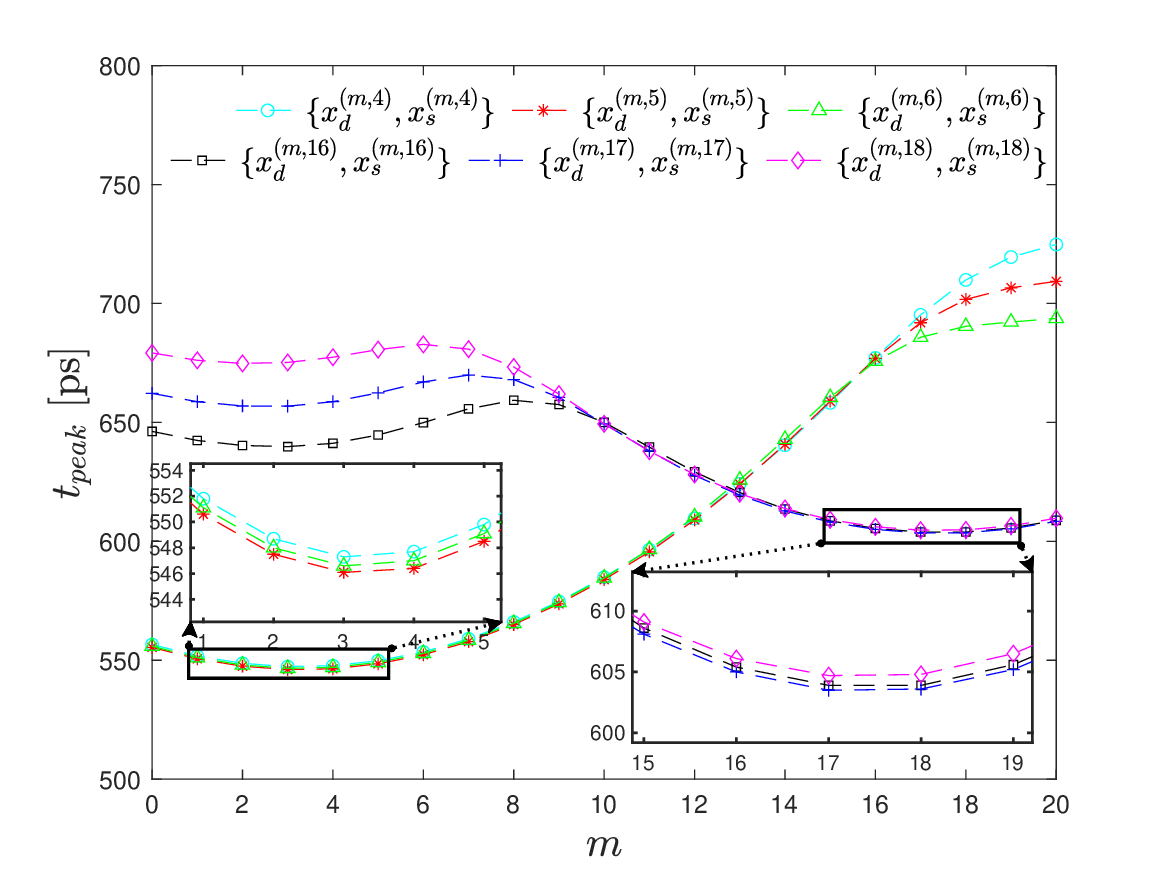}
\end{tabular}
\caption{Noise-free peak times measured by different S-D pairs \eqref{SD_index} in Example \ref{exa_mult01}: (a) 3D contour plot of peak times for $m,\,n=0,\,1,\,\cdots,\,20$, (b) peak times for $m=0,\,1,\,\cdots,\,20,\,n=4,\,5,\,6,\,16,\,17,\,18$}
\label{fig_mutl}
\end{figure}

\begin{figure}[htp]
\centering
\begin{tabular}{ll}
(a)&(b)  \\
\includegraphics[width=0.5\textwidth]{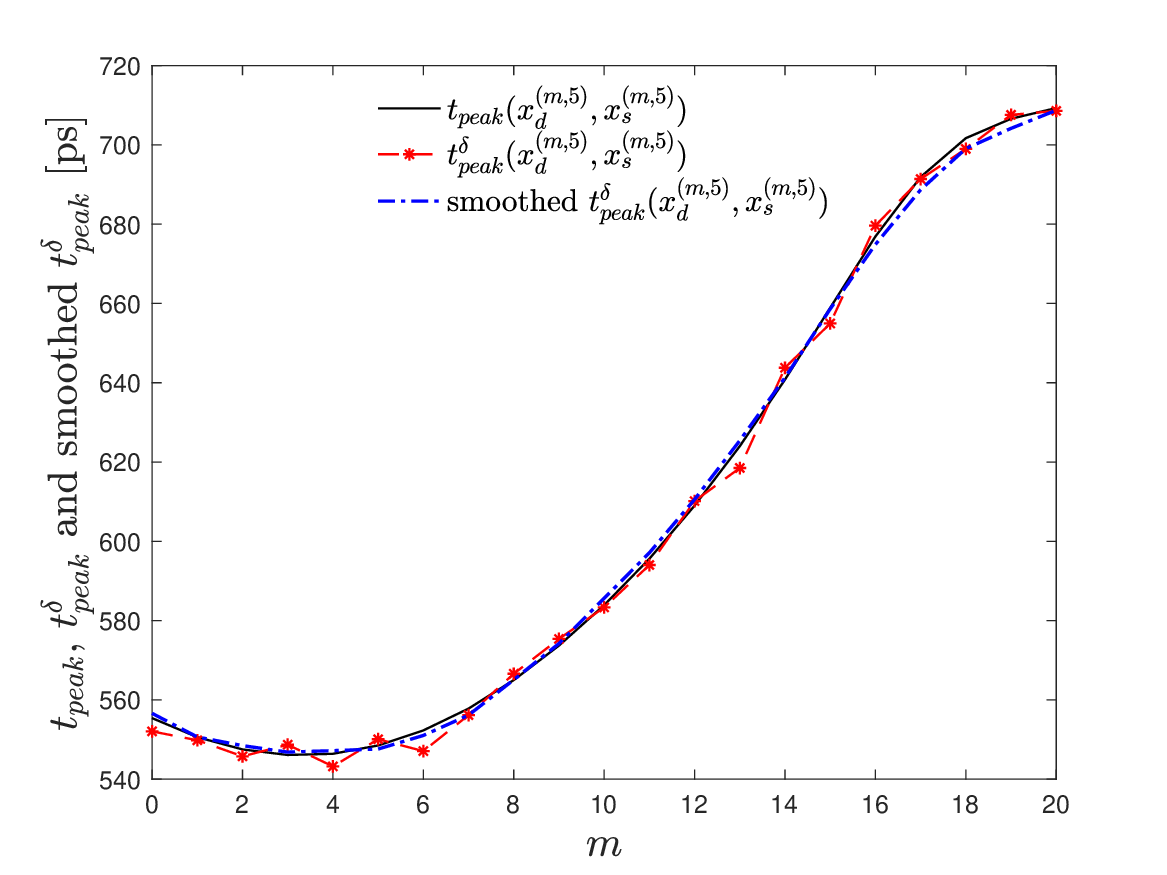}
&\includegraphics[width=0.5\textwidth]{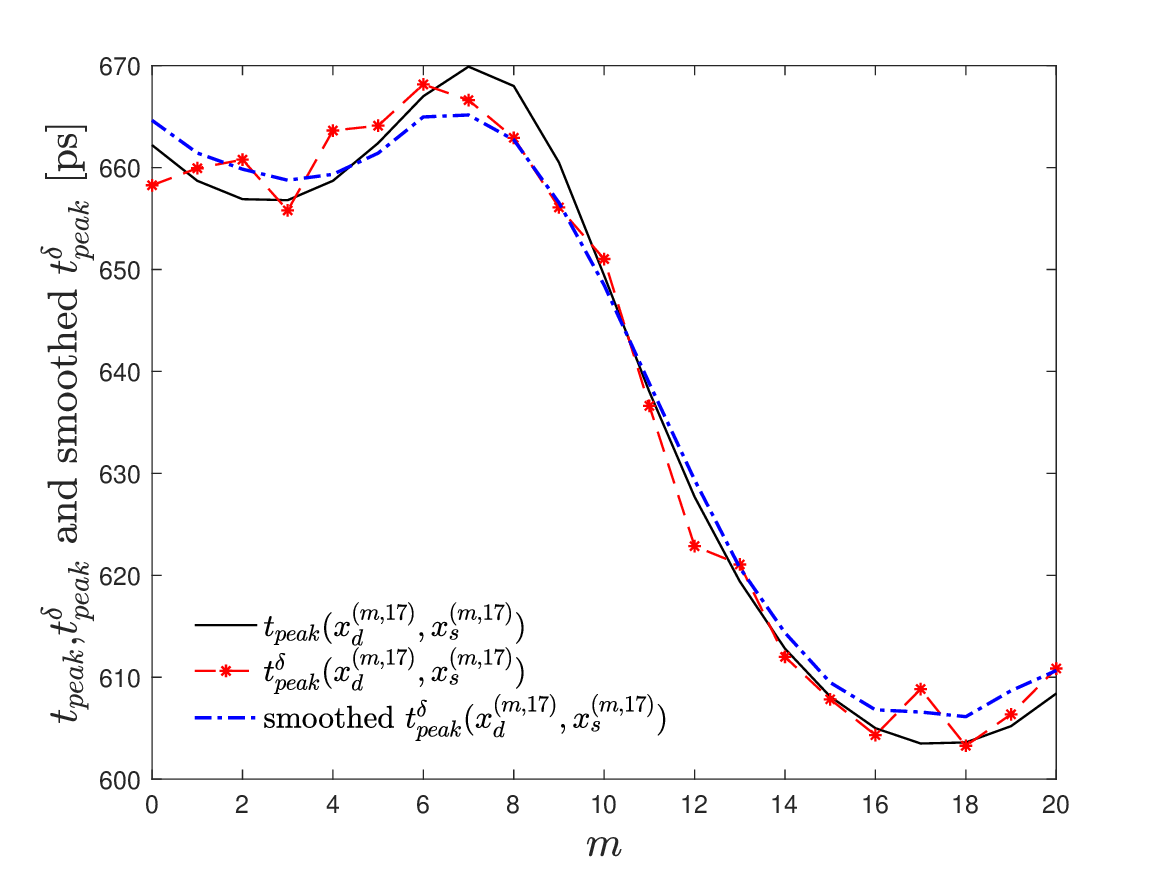}\\
\end{tabular}
\caption{Peak times, noisy peak times ($\hat{\delta}=0.1\%$) and smoothed noisy peak times for $m=0,\,1,\,2,\,\cdots,\,20 $ and $n=5,\,17$}
\label{fig_mutl_noise}
\end{figure}

\section{Conclusions and future work}\label{sec_conc}
In this paper, we studied the reconstruction of the locations of unknown point targets from peak times, which is the time when the temporal response function $U_m(x_d,\,t;\,x_s)$ reaches its maximum for a given S-D pair $\{x_d,\,x_s\}$ on the measurement surface. Further, we considered the peak time for the nonzero lifetime $\ell>0$ as an extension of our previous paper \cite{Chen2023}. 
Analyzing the asymptotic behavior of $U_m$, we derived the approximate peak time equation as a nonlinear equation and could define the approximate peak time by its positive root.   We proved the properties of approximate peak time (uniqueness, order relation, symmetry, and accuracy) and provided numerical verification of them. Based on these analyses, we proposed bisection reconstruction and boundary-scan algorithms to reconstruct the location of the point targets. %where the peak time is used as the index to shrink ${\rm ROI}$. %Under the well-separated assumption between targets, we extended the bisection reconstruction algorithm to multiple point targets. 
We remark on the advantage of considering the peak time. The peak time is uniquely and visibly identified. It is the least noisy data point in $U_m(x_d,\,t;\,x_s)$, making it potentially robust for reconstruction. 

The novelties of this paper can be summarized as follows
\begin{itemize}
\item The case of $\ell>0$ makes the mathematical model \eqref{ue_sys}--\eqref{um_sys} better fit the physical processes of FDOT. 
\item We found no other theoretical study considering the effect of $\ell$ on the peak time and defining the approximate peak time. The obtained analytical result is reliable and convenient to be applied to our inverse problem.
\item The proposed bisection reconstruction and boundary-scan algorithms are less time-consuming, efficient, robust, and accurate for identifying the locations of the point targets without any regularization. 
%\item The derived asymptotic analysis is easily extended to the case of the multiple point targets. {\color{magenta}This is insufficient!!}
\end{itemize}

We can further say that based on our asymptotic analysis, we have established reconstructing the locations of multiple points targets using the relation between the peak time and the S-D pairs. In the analysis, the distance function $d(x_c,t):=(|x_d-x_c|^2+|x_s-x_c|^2)/t$ plays the most important role, where $\{x_d,\,x_s\}$ is the S-D pair. Actually, the nearest point target $x_c$ to $\{x_d,\,x_s\}$ is on the sphere centered at $(x_d+x_s)/2$ with the radius depending on $t$ which is the level surface of the distance function, and we have used this to define the well-separated multiple point targets. 

As for much more practical situations, we will consider the following cases in our next FDOT study. They are the cases where the targets are not point targets, and the measurement surface $\partial\Omega$ is curved. The first task to start this study will be to have the Green function for the FDOT under the setup, including these cases. This is available by easily modifying the argument in one of our coauthors' papers \cite{Nakamura} giving the Green function for the interior transmission problem. The advantage of the mentioned argument is based on using the parabolic scaling which immediately gives the dominant part of the Green function. In relation to this, we note that the distance function is invariant under the parabolic scaling. Assuming the targets are well-separated convex domains and looking at the dominant part of the Green function, we speculate that we will find a similar situation as for the point target case in a neighborhood of the point that the mentioned sphere touches the target.

%\section*{Data availability statement}
%Data underlying the results presented in this paper are not publicly available at this time but may be obtained from the authors upon reasonable request.

%\section*{ORCID iDs}
%\begin{description}
%\item  Shuli Chen: https://orcid.org/0009-0000-6563-5312 
%\item Junyong Eom: https://orcid.org/0000-0002-2749-3322
%\item Gen Nakamura:  https://orcid.org/0000-0002-7911-8612
%\item Goro Nishimura: https://orcid.org/0000-0003-4330-2626 
%\end{description}

\section*{Author contributions}
Shuli Chen: Formal analysis, Numerical simulation, Writing - original draft; Junyong Eom: Conceptualization, Mathematical analysis, Validation, Writing - original draft; Nakamura Gen: Proposing asymptotic methods for Section 2, Writing, Editing; Nishimura Goro: Proposing the idea of this study, Physical interpretation of the results, Writing - review and editing. 
All authors have
read and agreed to the published version of the manuscript


\begin{thebibliography}{99}
\setlength{\itemsep}{0mm}

\bibitem{Ammari2014} H. Ammari, J. Garnier,  L. Giovangigli, 
Mathematical modeling of fluorescence diffuse optical imaging of cell membrane potential changes,
{\it Quart. Appl. Math.}, 72 (2014) 137--176. %https://doi.org/10.1090/S0033-569X-2013-01334-X.

%\bibitem{Arridge2009} S. Arridge, J. Schotland, Optical tomography: forward and inverse problems, Inverse Problems  12 (2009) 123010. https://doi.org/10.1088/0266-5611/25/12/123010.
% Early photons approach for FDOT
%\bibitem{Berube2006} Y. B\'{e}rub\'{e}-Lauzi\`{e}re, V. Robichaud, Time-resolved fluorescence measurements for diffuse optical tomography using ultrafast time-correlated single photon counting, Proc. SPIE 6372 (2006) 637206. %https://doi.org/10.1117/12.685421.

\bibitem{Bashkatov2011} A. N. Bashkatov, E. A. Genina, V. V. Tuchin, Optical properties of skin, subcutaneous, and muscle tissues: a review, \textit{J. Innov. Opt. Heal. Sci.}, 4 (2011) 9--38. %[doi:10.1142/S1793545811001319].

%\bibitem{Berezin2011}M. Y. Berezin, S. Achilefu, Fluorescence lifetime measurements and biological imaging, \textit{Chem. Rev.}, 110 (2010) 2641--2684. %https://doi.orig/10.1021/cr900343z

\bibitem{Ducros2011} N. Ducros, C. Andrea, A. Bassi, F. Peyrin, Fluorescence diffuse optical tomography: Time-resolved versus continuous–wave in the reflectance configuration,
{\it IRBM}, 32 (2011) 243--250.

% Mathematical derivations for moments in DOT and FDOT
%\bibitem{Ducros2009} N. Ducros, L. Herv\'{e}, A. Da Silva, J. Dinten, F. Peyrin, A comprehensive study of the use of temporal moments in time-resolved diffuse optical tomography: part I. Theoretical material, Phys. Med. Biol. 54 (2009) 7089--7105. %https://doi.org/10.1088/0031-9155/54/23/004.

\bibitem{Chen2023} S. Chen, J. Eom, G. Nakamura, G. Nishimura, Approximate peak time and its application to time-domain fluorescence diffuse optical tomography, {\it Commun. Anal. Comput.}, 1 (2023) 379--406.

% DOT review
%\bibitem{Durduran2010} T. Durduran, R. Choe, W. Baker, A. Yodh, Diffuse Optics for Tissue Monitoring and Tomography, Rep. Prog. Phys. 73 (2010) 076701. 
%https://doi.org/10.1088/0034-4885/73/7/076701.

\bibitem{Eom2023a} J. Eom, M. Machida, G. Nakamura, G. Nishimura, C. Sun, Expressions of the peak time for time-domain boundary measurements of diffuse light, {\it J. Math. Phys.}, 64 (2023) 083504. %https://doi.org/10.1063/5.0081169. 

%\bibitem{Eom2023b} J. Eom, G. Nakamura, G. Nishimura, C.~Sun, Local analysis for locating a single point target in time-domain fluorescence diffuse optical tomography, Differ. Integral Equ. 37 (2024) 27--58. %https://doi.org/10.57262/die037-0102-27.

% similar to the peaktime FDOT
\bibitem{Hall2004} D. Hall, G. Ma, F. Lesage, Y. Wang, Simple time--domain optical method for estimating the depth and concentration of a fluorescent inclusion in a turbid medium, {\it Opt. Lett.}, 29 (2004) 2258--2260. %https://doi.org/10.1364/OL.29.002258.

% similar to the peaktime FDOT
\bibitem{Hall2010} S. Han, S. Farshchi-Heydari, D. Hall, Analytical method for the fast time--domain reconstruction of fluorescent inclusions in vitro and in vivo, {\it Biophys. J.}, 98 (2010) 350--357. %https://doi.org/10.1016/j.bpj.2009.10.008.

% FDOT using CW light
\bibitem{Herve2011}
L. Herv\'{e}, A. Koenig, J. M. Dinten, Non-uniqueness in fluorescence-enhanced continuous wave diffuse optical tomography, {\it J. Opt.}, 13 (2011) 015702. %https://doi.org/10.1088/2040-8978/13/1/015702.

% the difference between the DOT methods; CW, TD, and FD.
\bibitem{Hoshi2016} Y. Hoshi, Y. Yamada, Overview of diffuse optical tomography and its clinical applications, {\it J. Biomed. Opt.}, 9 (2016) 091312. %https://doi.org/10.1117/1.JBO.21.9.091312.
% time is corresponding to the optical pathlength
\bibitem{Hebden1991} J. Hebden, R. Kruger, K. Wong, Time resolved imaging through a highly scattering medium, {\it Appl. Opt.}, 30 (1991) 788--794. %https://doi.org/10.1364/AO.30.000788
% review of the method of DOT
\bibitem{Hebden1997}
J. Hebden, S. Arridge, D. Deply, Optical imaging in medicine: I. Experimental technique, {\it Phys. Med. Biol.}, 42 (1997) 825--840.

% DOT textbook recently the author published FDOT version
\bibitem{Jiang2011} H. Jiang, {\it Diffuse optical tomography: principles and applications}, CRC Press, Boca Raton, 2010.
\bibitem{Jiang2022} H. Jiang, {\it Fluorescence Molecular Tomography: Principles and Applications}, Springer, Cham, Switzerland, 2022.
% FDOT, using Green's function and moments (mean time) instead of the peak time
%\bibitem{Laidevant2007} A. Laidevant, A. Da Silva, M. Berger, J. Boutet, J. Dinten, A. Boccara, Analytical method for localizing a fluorescent inclusion in a turbid medium, Appl. Opt. 46 (2007) 2131--2137. %https://doi.org/10.1364/ao.46.002131.

\bibitem{Lakowicz1999} J. R. Lakowicz, \textit{Principles of Fluorescence Spectroscopy}, Second Edition, Kluwer Academic/Plenum Publishers, New York, 1999. %[isbn: 0-306-46093-9].

% FDOT using moments of TPSF 
\bibitem{Lam2005}
S. Lam, F. Lesage, X. Intes, Time domain fluorescent diffuse optical
tomography: analytical expressions, {\it Opt. Exp.}, 13 (2005) 2263--2275. %https://doi.org/10.1364/OPEX.13.002263.

%\bibitem{Han08} S. H. Han and D. Hallm, Analysis of the fluorescence temporal point-spread function in a turbid medium and its application to optical imaging, J.Biomed.Opt., 13(6), 064038 (2008).


%\bibitem{Gao08} F. Gao, H. Zhao, L. Zhang, Y. Tanikawa, A. Marjono and Y. Yamada, A self-normalized full time-resolved method for fluorescence diffuse optical tomography, Opt. Exp., 16(17), 13104--13121, 2008.

\bibitem{Liu2022} J. Liu, M. Machida, G. Nakamura, G. Nishimura, C. Sun, On fluorescence imaging: the diffusion equation model and recovery of the absorption coefficient of fluorophores,
{\it Sci. China Math.}, 65 (2022) 1179--1198. %https://doi.org/10.1007/s11425-020-1731-y.

\bibitem{Ammari2020} Y. Liu, W. Ren, H. Ammari, Robust reconstruction of fluorescence molecular tomography with an optimized illumination pattern, {\it Inverse Probl. Imag.} 14 (2020) 535--568. %https://doi.org/10.3934/ipi.2020025.


% Freq domain FDOT
\bibitem{Milstein2003}
A. Milstein, S. Oh, K. Webb, C. Bouman, Q. Zhang,
D. Boas, R. Millane, Fluorescence optical diffusion tomography, {\it Appl. Opt.}, 42 (2003) 3081--3094. %https://doi.org/10.1364/AO.42.003081.

% General textbook on the fluorescence in biomedical applications
\bibitem{Mycek2004} M. Mycek, B. Pogue, {\it Handbook of Biomedical Fluorescence}, Marcel Dekker, New York, 2003.

% Early photons approach for the real applications
%\bibitem{Niedrea2008} M. Niedrea, R. de Kleinea, E. Aikawa, D. Kirscha, R. Weissledera, V. Ntziachristos, Early photon tomography allows fluorescence detection of lung carcinomas and disease progression in mice in vivo, Proc. Nat. Acad. Sci. 105 (2008) 19126--19131. %https://doi.org/10.1073/pnas.0804798105.

% One of most famous paper on FMT
\bibitem{Vasilis2002} V. Nitziachristos, C. Tung, C. Bremer, R. Weissleder, Fluorescence molecular tomography resolves protease activity in vivo, {\it Nat. Med.}, 8 (2002) 757--761. %https://doi.org/10.1038/nm729.
% A review of FMT(FDOT)
%\bibitem{Zhang2022} P.~Zhang, C.~Ma, F.~Song, G.~Fan, Y.~Sun, Y.~Feng, X.~Ma, F.~Liu, G.~Zhang, A review of advances in imaging methodology in fluorescence molecular tomography, Phys.Med.Biol. 67 (2022) 10TR01. https://doi.org/10.1088/1361-6560/ac5ce7

\bibitem{Nakamura}
G. Nakamura, H. Wang, Solvability of interior transmission problem for the diffusion equation by construting its Green function, {\it J. Inverse Ill-Pose. Probl.}, 27 (2019) 671--701.
%https://doi.org/10.1515/jiip-2018-0027.


% Dot:Hybrid of FD and TD using randomly modulated source
\bibitem{Papadimitriou2020}
K. Papadimitriou, E. Rosas, E. Zhand, R. Cooper, J. Hebden, S. Arridge, S. Powell, Dual wavelength spread-spectrum time-resolved diffuse optical instrument for the measurement of human brain functional responses, {\it Biomed. Opt. Express}, 11 (2020) 3477--3490. %https://doi.org/10.1364/BOE.393586.
%\bibitem{Nishimura2016} G.~Nishimura, K.~Awasthi, D.~Furukawa, Fluorescence lifetime measurements in heterogeneous scattering medium, \textit{J. Biomed. Opt.}, 21 (2016) 075013. %https://doi.org/10.1117/1.JBO.21.7.075013

\bibitem{Nishimura2017} K. Prieto, G. Nishimura, A new scheme of the time-domain fluorescence tomography for a semi-infinite turbid medium, {\it Opt. Rev.}, 24 (2017) 242--251.

\bibitem{Nishimura2024} G. Nishimura, T. Suzuki, Y. Yamada, H. Niwa, T. Koike, Depth detection limit of a fluorescent object in tissue-like medium with background emission in continuous-wave measurements: a phantom study, \textit{J. Biomed. Opt.}, 29 (2024) 097001. %[doi:10.1117/1.JBO.29.9.097001]

%\bibitem{Sikorki1982bisection} K. Sikorski, Bisection is optimal, Numer. Math. 40 (1982) 111--117.
%https://doi.org/10.1007/BF01459080.

%\bibitem{Ammari2019} H.~Ammari, B.~Jin, W.~Zhang, Linearized reconstruction for diffuse optical spectroscopic imaging, Proc. A. 475 (2019), no. 2221, 20180592, 21 pp.



%\bibitem{Liu2020} L. Wang, J. Liu, On fluorophore imaging by diffusion equation model: decompositions and optimizations, Acta Math. Appl. Sin. Engl. Ser. 36 (2020) 203--222. %https://doi.org/10.1007/s10255-020-0921-8.

\bibitem{Taroni2002} P. Taroni, A. Pifferi, A. Torricelli, D. Comelli, R. Cubeddu, In vivo absorption and scattering spectroscopy of biological tissues, \textit{Photoch. Photobio. Sci}., 2 (2023), 124--129. %[doi:10.1039/b209651j]

%\bibitem{VoDinh2003} T. Vo-Dinh, {\it Biomedical Photonics Handbook},  CRC Press, Boca Raton, 2003.

\end{thebibliography}
\end{document}